\documentclass[12pt,reqno]{amsart}

\usepackage{amsfonts}
\usepackage{amsmath}
\usepackage{amsthm}
\usepackage{amsxtra}
\usepackage{amssymb,latexsym}
\usepackage[mathcal]{eucal}
\usepackage{amscd} %The tikz-packages are better for commutative diagrams.
\usepackage{graphicx}

% Here's a package for handling hyphenation. If a hyphenated word consisting of mathematical symbols and
% English words causes bad boxes because it can't be broken into syllables correctly, replace the hyphen
% between the symbols and words with \hyp{}.
\usepackage{hyphenat}

% These are for commutative diagrams:
\usepackage{tikz,tikz-cd}
\usetikzlibrary{matrix,arrows}
\tikzset{node distance=2cm, auto}

% This is for formatting section titles. The font command \bfseries comes in here, so there's no need to
% add it when starting a new section.
\usepackage{titlesec}
\titleformat{\section}{\bfseries}{\thesection.}{1em}{\filcenter}{}
% More space above section titles:
\titlespacing{\section}{0pt}{24pt}{6pt}

% Here we redefine the beginning of the proof environment. Instead of an italicized 'Proof.', we use
% 'Proof:' in bold letters.

%\usepackage{hyperref}
% \usepackage[hidelinks]{hyperref}

% ooo I added some packages (above), it may work for you now.
% You may need to press enter several times (many) on the first compilation
% new 9  This bit still doesn't work.  My pdftex stuff is faulty in a way I can't fix
% \usepackage[pdftex,bookmarks,colorlinks,breaklinks]{hyperref}

\newtheorem{theo}{Theorem}[section]
\newtheorem{prop}[theo]{Proposition}
\newtheorem{lem}[theo]{Lemma}
\newtheorem{cor}[theo]{Corollary}

\theoremstyle{definition}
\newtheorem{df}[theo]{Definition}

% Here we define the example environment with the Q.E.D. box at the end.
\usepackage{etoolbox}
\newtheorem{ex}[theo]{Example}
\AtEndEnvironment{ex}{\null\hfill\qedsymbol}

% Environments without dots or numbers at the end:
\newtheoremstyle{dotless}{}{}{}{}{\bfseries}{}{ }{}
\theoremstyle{dotless}
\newtheorem*{remark}{Remark:}

\newtheorem*{qs}{Questions:}

\def \ol {\overline}
\def \N {\mathbb N}

\def \Z {\mathbb Z}
\def \R {\mathbb R}

\def \NN {\mathcal N}

\def \RR {\mathcal R}

\def \00 {\mathbf 0}
\newcommand \A {\mathcal A}
\newcommand \BB {\mathcal B}
\newcommand \CC {\mathcal C}
\newcommand \II {\mathcal I}
\newcommand \OO {\mathcal O}
\newcommand \YY {\mathcal Y}
\newcommand{\al}{\alpha}

\newcommand{\ga}{\gamma}
\newcommand{\del}{\delta}

\newcommand{\ep}{\epsilon}

\newcommand{\om}{\omega}
\newcommand{\Om}{\Omega}

% aaa

\newcommand{\Iso}{\mathrm{Iso}}
\newcommand{\Trans}{\mathrm{Trans}}
\newcommand{\wh}[1]{\widehat{#1}}
\newcommand{\wt}[1]{\widetilde{#1}}

\numberwithin{equation}{section}

% This puts bullets in front of nested lists of items of the second level.

\begin{document}

\title{ Chain Transitive Homeomorphisms on a Space: All or None}
%{Which Spaces Can Be Omega Limit Sets?}

\author{Ethan Akin and Juho Rautio}
\address{Mathematics Department,
 The City College, 137 Street and Convent Avenue,
 New York City, NY 10031, USA}

\email{ethanakin@earthlink.net}

\address{University of Oulu, Department of Mathematical Sciences, PB 8000, FI-90014 Oulun yliopisto, Finland.}
\email{Juho.Rautio@oulu.fi}

\date{June, 2015}

\begin{abstract}
We consider which spaces can  be realized as the omega limit set of the discrete time dynamical system.
This is equivalent to asking which spaces admit a chain transitive homeomorphism and which do not. This
leads us to ask for spaces where all homeomorphisms are chain transitive.
\end{abstract}

\keywords{omega set, chain transitive homeomorphism, rigid space, Slovak space}

\thanks{{\em 2010 Mathematical Subject Classification} 37B20, 37B25 }

\maketitle

\section{Introduction}\label{intro}

We will use the term \emph{space} to mean a nonempty, compact metrizable space unless otherwise mentioned.
When a metric is required we, assume that one is chosen and fixed. Where a metric is
used in a construction below, the result is independent of the choice of metric. Broadly, our metric space ideas and
constructions
are really uniform space concepts and a compact space has a unique uniformity.

The dynamical systems we will consider are pairs $(X,f)$, where $f: X \to X$ is a continuous map on a  space $X$.
If $x \in X$, then the sequence $\{ f(x), f^2(x), \dots \}$ is the \emph{positive $f$-orbit sequence} of $x$ and
$\om f(x)$ is the set of limit points of the sequence.

In a series of papers, \cite{ABCP}, \cite{AC1} and \cite{AC2}, Agronsky and his collaborators initiated study of the
\emph{omega set problem}. Call the pair
$(X,f)$ an \emph{omega system} when it can be embedded as a subsystem of some $(Y,g)$ so that $X = \om g(y)$ for some
$y \in Y$. The authors call the system \emph{orbit enclosing} when one can choose $(Y,g) = (X,f)$, i.e. there exists $x \in X$
such that $\om f(x) = X$. A space $X$ is an \emph{omega set} when there exists a continuous map $f$ on $X$ so that
$(X,f)$ is an omega system.

In current parlance, the system $(X,f)$ is an orbit enclosing omega system exactly when it is topologically transitive.
In general, the system $(X,f)$ is an omega system exactly when it is chain transitive.

Given $\ep \geq 0$, a finite or infinite sequence $\{ x_n \in X \}$ with at least two terms is an \emph{$\ep$-chain} for $(X,f)$ if
$d(f(x_k),x_{k+1}) \leq \ep$ for all terms $x_k$ of the sequence (except the last one). The system
$(X,f)$ is called \emph{chain transitive} when every pair of points of $X$ can be connected by some finite $\ep$-chain
for every positive $\ep$. A subset $A \subset X$ is called a \emph{chain transitive subset} when it is closed and
$f$-invariant %(i. e. $f(A) = A$)
and the subsystem $(A,f)$ is chain transitive.

It is well known that any omega limit set is a chain transitive subset,  e.g. \cite{A-93} Proposition 4.14.
On the other hand,
as observed by Takens, it is an easy exercise to show that if $(X,f)$ is chain transitive, then then it can be embedded
in a larger system in which it is an omega limit set, \cite{A-93} Exercise 4.29. We will review the proofs in Section 3.
The construction uses a subset $Y $ of $X \times [0,1]$. In particular, if $X \subset [0,1]^n$ then $Y \subset [0,1]^{n+1}$.
Applying the Tietze Extension Theorem in each coordinate, we can extend $g$ to a continuous map on all of $[0,1]^{n+1}$ and so
obtain $X$ as the omega limit set of a system on $[0,1]^{n+1}$. Note, however, that even if $g$ is a homeomorphism, we might
not be able to extend it to a homeomorphism on $[0,1]^{n+1}$.

This equivalence is really a classical result due to Dowker and Friedlander \cite{DF} for homeomorphisms and to
Sarkovskii \cite{Sa} in general. They, however, use a different but equivalent condition instead of chain transitivity.
They say that $(X,f)$ is \emph{$f$-connected} if, for any proper, nonempty, closed subset $U \subset X$, the intersection
$f(U) \cap \ol{X \setminus U} \not= \emptyset$. They show that $(X,f)$ is an omega system iff it is
$f$-connected.

To clarify the relationship between chain transitivity and $f$\hyp{}connectedness, we recall the concept of an \emph{attractor},
as described by Conley in \cite{C} and with detailed exposition in \cite{A-93}. Call a closed set $U \subset X$ an \emph{inward set}
for $f$ if $f(U)$ is contained in the interior $U^{\circ}$ or, equivalently, $f(U) \cap \ol{X \setminus U} = \emptyset$.
Thus,  $(X,f)$ admits a proper, nonempty, inward subset iff it is not $f$-connected.   If $U$ is an inward set, then
$A = \bigcap_{n=0}^{\infty} \ f^n(U)$ is called the associated attractor. For a number of equivalent descriptions of an attractor
see \cite{A-93} Theorem 3.3. Theorem 4.12 of \cite{A-93} says that $(X,f)$ is chain transitive iff $X$ is the
only nonempty attractor and iff $X$ is the only nonempty inward set.  It follows that chain transitivity and
$f$-connectedness are equivalent concepts.

The label ``attractor'' has been used for other ideas. Some authors refer to all omega limit sets as attractors.
An attractor as defined above need not be chain transitive, and so there are attractors which are not omega systems.
A more reasonable definition is that a set $A$ is an attractor for $f$ when it is $f$-invariant and, for every $x$ in some
neighborhood of $A$, we have $\om f(x) \subset A$.  This condition is necessary in order that $A$ be an attractor \`{a} la Conley,
but it is not sufficient. If $X$ is the one-point compactification of $\Z$ and $f$ is the extension to $X$ of translation by
$1$ on $\Z$, then $(X,f)$ is chain transitive.  On the other hand, the point at infinity is the omega limit set of every point.
By Theorem 3.6(a) of \cite{A-93}, an $f$-invariant subset $A$ is an attractor iff $\{ x : \om f(x) \subset A \}$ is
a neighborhood of $A$ and,
in addition, $A$ is stable, i.e. for every $\ep > 0$ there exists $\del > 0$ such that if $x$ is $\del$-close to $A$,
then the forward orbit of $x$  remains $\ep$-close to $A$.

 The identity map $1_X$ on $X$ is chain transitive iff $X$ is connected.  Thus, a connected space admits
 a chain transitive homeomorphism and so is an omega space.

Recall that a \emph{Peano space} is a compact, connected, locally connected space or, equivalently, a continuous image of the
unit interval. The major result of \cite{AC2} is that if $X$ has finitely many components
and each is a nontrivial, finite-dimensional
Peano space, then $X$ is an orbit enclosing omega space.  That is, it admits a topologically transitive map.

Our purpose here is to consider the homeomorphism version of the omega set problem.  That is, we ask when a space $X$
admits a homeomorphism $f$  such that $(X,f)$ is an omega system or, equivalently, such that $f$ is a chain transitive map.

To illustrate the difference between the original problem and the homeomorphism version, consider the \emph{tent map}
on $[0,1]$ defined by
\begin{equation}\label{001a}
T(t) \ = \
\begin{cases}
2t & \text{for } 0 \leq t \leq \frac{1}{2}, \\
1 - 2t &  \text{for } \frac{1}{2} \leq t \leq 1,
\end{cases}
\end{equation}
which is well known to be topologically transitive. Now let $X_0 = [0,1] \times \{ 0, 1\}$, and define $f_0$ on $X_0$
by
\begin{equation}\label{001b}
f_0(t,0) \ = \ (t,1) \qquad \text{ and } \qquad f_0(t,1) \ = \ (T(t),0),
\end{equation}
so that $f_0^2 = T \times 1_{\{ 0, 1 \}}$.

Let $(X,f)$ be the quotient system obtained by identifying the points $(0,1) = (1,1)$ in $X$. Thus, $X$ is the disjoint
union of a circle and an interval. It easily follows that $X$ does not admit a chain transitive homeomorphism.  On the other hand,
$f$ is a topologically transitive map.

Let $(X_1,f_1)$ be the quotient system obtained by identifying the points $(0,0) = (0,1) = (1,1)$ in $X_1$.
Thus, $X_1$ consists of a circle and an interval joined at a point.  As $X_1$ is connected, the identity is a chain
transitive homeomorphism. Since the points of the interval other than $(1,0)$ separate the space and the points of the circle
other than the intersection point with the interval do not, it easily follows that $X_1$ does not admit a topologically
transitive homeomorphism.  On the other hand, $f_1$ is a topologically transitive map.

%
%We begin with the question: when can $(X,f)$  be realized as the the subsystem of some $(Y,g)$ with $X$ the omega limit
%set of some point $y \in Y$. That is, $X \ = \ \o g(y) \ = \ \bigcap_{n \in \N} \ \ol{\{ g^k(y) : k \geq n \}}$.

 If $X$ contains a proper, clopen, nonempty, $f$-invariant set $A$, then we say that  $X$ is \emph{$f$-decomposable}.
 In this case $(X,f)$ is not chain transitive, for if $\ep$ is smaller
 than the distance from $A$ to its complement, then any $\ep$-chain which begins in $A$ remains in $A$.
 With $H(X)$ the group of homeomorphisms on $X$, we say that $X$ is \emph{$H(X)$-decomposable} if there is a proper, clopen, nonempty subset
 $A$ of $X$ such that $A$ is invariant for every homeomorphism on $X$. Clearly, if $X$ is $H(X)$-decomposable, then it
 admits no chain transitive homeomorphism.  For example, let $\Iso(X)$ denote the (possibly empty) set of isolated points
 of $X$. If the closure $\ol{\Iso(X)}$ is a proper, clopen, nonempty subset of $X$, then $X$ is $H(X)$-decomposable and
 so admits no chain transitive homeomorphism. In the zero-dimensional case, this is the only obstruction.
 We prove a slightly more general result in Section \ref{trans}.

 %EJA  insert

 If $X$ is not $f$-decomposable (or not $H(X)$-decomposable), we will call it \emph{$f$-indecomposable} (resp. \emph{$H(X)$-indecomposable}).

 \begin{theo}\label{theo1.01}   If $X$ is a space such that $\ol{\Iso(X)}$ is not a proper clopen subset of $X$ and
 such that the open set $X \setminus \ol{\Iso(X)}$ is empty or zero-dimensional, then $X$ admits a chain transitive
 homeomorphism. \end{theo}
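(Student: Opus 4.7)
The proof splits into cases based on $Y := \ol{\Iso X}$ and $Z := X \setminus Y$. By hypothesis $Y$ is closed but not a proper clopen subset of $X$, leaving three possibilities: (i) $Y = \emptyset$, (ii) $Y = X$, or (iii) $Y$ is proper and not open; the other hypothesis says $Z$ is empty or zero-dimensional. In case (i), $X = Z$ is zero-dimensional without isolated points, so $X$ is a single point or a Cantor set, and one takes the identity or an odometer respectively --- each is chain transitive (an odometer is even minimal). So the real work is in (ii) and (iii).

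For cases (ii) and (iii) the plan is to build $f$ as the continuous extension of a bijection $\sig$ of the countable set $\Iso X$ produced by a scale-by-scale inductive procedure. Fix $\ep_n \downarrow 0$, and for each $n$ choose a finite refining cover of $X$ by pieces $V_1^n, \ldots, V_{m_n}^n$ of diameter less than $\ep_n$, together with a pool of isolated points inside each $V_i^n$. Then arrange $\sig$ so that at scale $n$ it cyclically shifts pool $i$ into pool $i+1$ (indices mod $m_n$), compatibly across all $n$ and eventually exhausting $\Iso X$. In case (ii) the isolated points are dense so such pools exist inside every $V_i^n$; in case (iii) one chooses a clopen partition of $Z$ using its zero-dimensionality and a refined cover of $Y$ using the density of $\Iso X$ in $Y$, pairing the pieces at the boundary $Y \cap \ol Z$, which is nonempty precisely because $Y$ is not open. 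Chain transitivity of the resulting map $f$ follows by design: for any $\ep > 0$ and $x, y \in X$ one picks $n$ with $\ep_n < \ep$ and concatenates at most $m_n$ applications of $f$ together with a pair of $\ep$-jumps between pool and target.

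The main obstacle I foresee is case (iii): the scale-coherent pasting of the refining covers across the boundary $Y \cap \ol Z$, so that the cyclic shift actually carries pools from $Y$ across into $Z$ and back, rather than leaving behind an invariant clopen block. The hypothesis that $Y$ is not both proper and clopen is exactly what ensures enough such crossings exist. A secondary but substantive point is that the extension of $\sig$ from $\Iso X$ to all of $X$ must be a homeomorphism. The scale-coherent construction achieves this: since $\sig$ only moves an isolated point within its current $\ep_n$-block, any sequence in $\Iso X$ accumulating at a non-isolated point $p$ is mapped under $\sig$ to a sequence accumulating at $p$, so $\sig$ extends continuously to $X$ by fixing every non-isolated point, and the symmetric argument applies to $\sig^{-1}$. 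This yields the required chain transitive homeomorphism of $X$.
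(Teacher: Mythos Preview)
Your approach has a genuine gap in case (iii). You propose to extend the bijection $\sigma$ from $\Iso(X)$ to all of $X$ by \emph{fixing every non-isolated point}, so in particular your $f$ restricts to the identity on $Z = X \setminus \ol{\Iso(X)}$. But $Z$ can contain nonempty subsets that are clopen in $X$, and any such set is then a proper, clopen, $f$-invariant subset, so $f$ is $f$-decomposable and cannot be chain transitive. Concretely, let $C$ be a Cantor set, pick $c_0 \in C$, and attach a sequence of isolated points $p_n \to c_0$ (say $X = C \times \{0\} \cup \{(c_0, 1/n) : n \in \N\} \subset \R^2$). Then $\ol{\Iso(X)} = \{p_n\} \cup \{c_0\}$ is closed, proper, and not open (so the hypothesis holds), and $Z = C \setminus \{c_0\}$ is zero-dimensional. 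Any clopen subset $A \subset C$ not containing $c_0$ is clopen in $X$, and your $f$ is the identity on it. Your chain-transitivity sketch breaks precisely here: a point deep inside such an $A$ is farther than $\ep$ from every isolated point, so the ``$\ep$-jump to a pool'' is unavailable, and since $f|_A = 1_A$, every $\ep$-chain starting in $A$ stays in $A$.

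The paper's proof (Corollary \ref{cor3.12}) repairs exactly this: it does \emph{not} take the identity on $\ol{Z}$ but instead observes that $(\ol{Z}, \ol{Z} \cap Y)$ is a Cantor set extension pair (Theorem \ref{theo3.08}(a)) and invokes Theorem \ref{theo3.11}(b) to get a \emph{topologically transitive} homeomorphism $f_2$ on $\ol{Z}$ that is the identity only on the boundary $\ol{Z} \cap Y$. Gluing this with the chain transitive map on $\ol{\Iso(X)}$ (your case (ii), which is essentially Theorem \ref{theo3.11}(c)) along the nonempty overlap gives a single chain transitive homeomorphism on $X$. Your refining-cover construction for case (ii) is a reasonable alternative to the paper's Lorch-uniqueness route, but for case (iii) you need an additional ingredient of this kind: some genuinely nontrivial dynamics on $Z$ that mixes its clopen pieces.
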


\begin{cor}\label{cor1.02} If the isolated points are dense in $X$, then $X$ admits a chain transitive homeomorphism.\end{cor}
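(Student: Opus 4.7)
This corollary is a direct specialization of Theorem \ref{theo1.01}. If the isolated points are dense in $X$, then $\ol{\Iso(X)} = X$ is not a proper subset of $X$, so the first hypothesis holds, and $X \setminus \ol{\Iso(X)} = \emptyset$ satisfies the second. Applying the theorem yields a chain transitive homeomorphism.

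To spell out the construction the theorem provides in this case, the plan is to produce a homeomorphism $f$ of $X$ with a dense orbit, which is topologically transitive and hence chain transitive. Since $X$ is compact metric, $\Iso(X)$ is countable. If $\Iso(X)$ is finite, then $X$ is finite discrete and any single cyclic permutation is chain transitive. Otherwise I would produce a bi-infinite enumeration $\Iso(X) = \{x_n : n \in \Z\}$ such that the shift $f(x_n) = x_{n+1}$ extends uniquely and continuously to a homeomorphism of $X$; the orbit of $x_0$ is then the dense set $\Iso(X)$. The enumeration would be built by a back-and-forth construction over a countable basis refined at the accumulation set $X \setminus \Iso(X)$, inserting blocks of isolated points at consecutive positive and negative indices so that, for every accumulation point $y$ and every basic neighborhood $U$ of $y$, the sets $\{n : x_n \in U\}$ and $\{n : x_{n+1} \in U\}$ agree cofinitely in both tails of $\Z$. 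This matching condition forces the shift and its inverse to extend continuously at $y$, and compactness of $X$ then makes the extension a homeomorphism.

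The main obstacle is to coordinate these matching conditions simultaneously at all accumulation points, which, a priori, can form a topologically intricate closed set. The back-and-forth process handles this by tracking only finitely many pairs of basic neighborhoods at each stage and balancing additions to positive and negative indices, so that continuity at every accumulation point is achieved in the limit; the fact that $\Iso(X)$ is open and dense guarantees that every stage has enough isolated points available in each prescribed neighborhood to be assigned without conflict.
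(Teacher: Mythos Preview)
Your first paragraph is correct and is exactly how the paper derives the corollary: density of $\Iso(X)$ gives $\ol{\Iso(X)}=X$, so both hypotheses of Theorem~\ref{theo1.01} hold trivially. That is the entire proof.

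The sketch you add afterwards, however, contains a genuine error. A space with infinitely many isolated points \emph{never} admits a topologically transitive homeomorphism: by Proposition~\ref{prop2.03}(b), topological transitivity forces $X$ to be perfect or a single periodic orbit. A dense bi-infinite orbit $\{x_n:n\in\Z\}$ makes only $f_{\pm}$ topologically transitive, not $f$, and $f_{\pm}$ topologically transitive does not imply $f$ chain transitive. More concretely, your matching condition forces the extension to be the identity on the accumulation set $A=X\setminus\Iso(X)$, and then the single-shift picture you describe is precisely the two-point compactification $\Z^{**}$ of Example~\ref{ex2.06} whenever $A$ is disconnected: there $t^{**}_{\pm}$ is topologically transitive but $t^{**}$ is \emph{not} chain transitive, because one endpoint is an attractor and the other a repellor. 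Any $X$ whose accumulation set has at least two components will exhibit the same obstruction under your construction.

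The construction the paper actually uses (Theorem~\ref{theo3.11}(c), invoked in the proof of Corollary~\ref{cor3.12}) avoids this by using \emph{infinitely many} disjoint bi-infinite orbits of isolated points, one for each ``gap'' in a Cantor-set model of $A$, arranged so that each orbit bridges a single gap in both directions. This is what makes chain transitivity work even when $A$ is totally disconnected; a single shift orbit cannot do the job.
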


Thus, the problems which remain come from the nontrivial components. A clopen component is called an \emph{isolated component}.
Clearly, if the closure of the isolated components is a proper, clopen, nonempty subset of $X$, then $X$ is $H(X)$-decomposable.

If $\YY$ is a set of connected spaces, let $C_{\YY}$ denote the closure
of the union of those components of $X$ which are homeomorphic to some element of
$\YY$. If $C_{\YY}$ is a proper, clopen, nonempty subset, then $X$ is
$H(X)$-decomposable.  For example, if $X$ has finitely many components, then either they are all homeomorphic, in which case $X$
admits a periodic chain transitive map, or $X$ is $H(X)$-decomposable. Contrast this with the map result described above.

We say that $X$ satisfies the \emph{diameter condition on isolated components} if for
every $\ep > 0$ there are only finitely many isolated
components with diameter greater than $\ep$.

\begin{theo}\label{theo1.03} If $X$ satisfies the diameter condition and the union of the isolated
components is dense in $X$, then either $X$ is $H(X)$-decomposable or else $X$ admits a chain transitive homeomorphism.
\end{theo}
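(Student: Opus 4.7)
The plan is to assume $X$ is $H(X)$-indecomposable and construct a chain transitive homeomorphism $f$, mimicking the approach of Theorem \ref{theo1.01} with isolated components playing the role of isolated points. First, I would classify the isolated components by homeomorphism type, writing $\{C_{\alpha,i}\}_{i \in I_\alpha}$ for the components of type $\alpha$ and $X_\alpha$ for the closure of their union in $X$. Any homeomorphism of $X$ permutes components of each type among themselves, so each $X_\alpha$ is invariant under all of $H(X)$; indecomposability then forces each $X_\alpha$ to be either all of $X$ or not clopen. A simple counting argument shows that in the latter case $I_\alpha$ must be infinite (else $X_\alpha$ is a finite union of clopens and hence clopen), so in the multi-type case every $I_\alpha$ is infinite, and the $X_\alpha$'s must overlap on $Z := X \setminus \bigcup_{\alpha,i} C_{\alpha,i}$: otherwise they would form a disjoint cover by closed invariant sets, contradicting indecomposability.

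Within a single type with $I_\alpha$ infinite, I would choose a bijection $\pi_\alpha$ of $I_\alpha$ whose orbit structure is a single bi-infinite cycle, arranged so that both its forward-tail and backward-tail subsequences of $\{C_{\alpha,i}\}$ accumulate, in Hausdorff distance, densely on $X_\alpha \cap Z$. The diameter condition makes this feasible: all but finitely many $C_{\alpha,i}$ have diameter less than any prescribed $\ep$, so almost all components are essentially singletons in the hyperspace and can be placed freely along the cycle, threading a countable dense subset of $X_\alpha \cap Z$ as the target of the two tails. Given $\pi_\alpha$, I pick homeomorphisms $g_{\alpha,i} \colon C_{\alpha,i} \to C_{\alpha,\pi_\alpha(i)}$ and set $f|_{C_{\alpha,i}} = g_{\alpha,i}$; continuity then forces, on each $z \in X_\alpha \cap Z$ reached as a limit $C_{\alpha,i_n} \to \{z\}$, the definition $f(z) := \lim C_{\alpha,\pi_\alpha(i_n)}$, which the diameter condition shows is a well-defined single point depending only on $z$. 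The finite-$I_\alpha$ case (which by the above forces a single type with $\bigcup C_{\alpha,i} = X$) is handled by a finite cycle with the $g_{\alpha,i}$'s composing to the identity around the cycle, reducing chain transitivity to that of the identity on each connected $C_{\alpha,i}$.

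When multiple types are present, the definitions of $f$ produced by the different cycles must match on the overlaps $X_\alpha \cap X_\beta \cap Z$; this simultaneous coordination is the main technical obstacle. I would address it by choosing all the $\pi_\alpha$'s and the corresponding enumerations jointly, using the indecomposability hypothesis to see that the overlap structure is rich enough to admit a consistent choice of limit behaviour. Once $f$ is defined and verified to be a continuous bijection on the compact space $X$ — hence a homeomorphism — chain transitivity follows by the now-standard trick: for $x, y \in X$ and $\ep > 0$, iterate $f$ forward from $x$ until landing in a component of diameter less than $\ep$ close to some point of $Z$; the density of the forward/backward cycle-tails on $X_\alpha \cap Z$ provides an $\ep$-jump to a small component whose iterates subsequently enter the component containing $y$; finish using chain transitivity inside that (connected) component. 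The hard part is the coordination across types together with the verification that the resulting extension to $Z$ is globally continuous; both rely essentially on the diameter condition and on the combinatorial flexibility granted by $H(X)$-indecomposability.
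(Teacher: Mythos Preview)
Your outline identifies the right decomposition into homeomorphism types and correctly observes that if some type has only finitely many isolated components then $X$ is $H(X)$-decomposable. But the step you flag as ``the main technical obstacle'' --- defining $f$ on the residual set $Z = X \setminus \bigcup_{\alpha,i} C_{\alpha,i}$ by limits and coordinating across types --- is a genuine gap, not merely a detail. A point $z \in Z$ can be approached by isolated components of several types, and even within one type your limit $\lim C_{\alpha,\pi_\alpha(i_n)}$ depends on the entire cycle structure of $\pi_\alpha$; nothing in the indecomposability hypothesis tells you these limits exist, are singletons, agree across types, or assemble into a homeomorphism of $Z$. You acknowledge this but do not resolve it.

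The paper sidesteps the whole issue: it sets $f$ equal to the \emph{identity} on $Z$. For each type $\alpha$ with infinitely many components, $(\overline{Q_\alpha}, \overline{Q_\alpha}\cap Z)$ is a $K_\alpha$-extension pair (this is exactly where the diameter condition enters, via Theorem~\ref{theo3.08}(b)), and Theorem~\ref{theo3.11}(c) supplies a chain transitive $f_\alpha$ on $\overline{Q_\alpha}$ that restricts to the identity on $\overline{Q_\alpha}\cap Z$. These patch together trivially because they all agree (with $1_Z$) on overlaps. The resulting $f$ is not obviously chain transitive --- indeed the pieces $\overline{Q_\alpha}$ may be disjoint away from $Z$ --- but the paper observes that for any $g \in H(X)$ and any $x$, approximating $x$ by $x_k \in Q_{\alpha_k}$ gives $g(x_k)$ in the same $Q_{\alpha_k}$, so $(x,g(x)) \in \overline{\bigcup_\alpha Q_\alpha \times Q_\alpha} \subset \CC f$. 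Thus $h_X \subset \CC f$; in particular $f$ is chain recurrent, and any $f$-decomposition would be an $H(X)$-decomposition. Hence $H(X)$-indecomposability forces $f$-indecomposability, which together with chain recurrence gives chain transitivity by Proposition~\ref{prop2.05}(d). This last algebraic move is the idea your argument is missing.
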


%EJA  a bit of rewriting here

The rest of Section \ref{trans} consists of counterexamples to reasonable conjectures. We construct
\begin{itemize}
\item A space $X$ that  is $H(X)$-decomposable, with all components homeomorphic, and no isolated components.

\item A space $X$ that  is $H(X)$-indecomposable but $f$-decomposable for every $f \in H(X)$.
The space can be chosen with the isolated components all homeomorphic and with a dense union.

\item A space $X$  that  is $f$-indecomposable for some $f \in H(X)$ but admits no chain transitive homeomorphism.
The space can be chosen with the isolated components all homeomorphic and with a dense union.
\end{itemize}

These examples rule out the obvious extension of the above corollary. The isolated components can be
dense and all homeomorphic to one another, but nonetheless the space admits no chain transitive homeomorphism.

Having considered when there are no chain transitive homeomorphisms, we consider in Section \ref{allct}
the question:
When is every homeomorphism on a space $X$ chain transitive? For such a space $X$, the identity map $1_X$ is chain transitive,
and so $X$ must be connected.

In \cite{dG-W}, rigid spaces were defined and Peano space examples  were
constructed.  A space $X$ is \emph{rigid} if $1_X$ is the only homeomorphism on $X$, i.e. the homeomorphism group
$H(X)$ is trivial. For a connected rigid space, it is trivially true that all homeomorphisms are chain transitive.

Using rigid spaces one can construct more interesting examples. Following \cite{dG}, we can begin with a finitely generated
group $G$ and use rigid spaces instead of intervals as edges in the Cayley graph. If $X$ is the one-point compactification
of this fattened Cayley graph, then $H(X)$ is isomorphic to $G$ and every homeomorphism is chain transitive.
We obtain examples with non-discrete homeomorphism group and even with the path components nontrivial.

In these cases, the homeomorphism group does not act in a topologically transitive manner on the space.  Distinct points in
each rigid piece are homeomorphically distinct. It is possible to obtain examples with all homeomorphisms chain transitive and
with the homeomorphism group acting in a topologically transitive manner.  These are built using the recent, beautiful construction in
\cite{D-S-T} of Slovak spaces.  A \emph{Slovak space} $X$ has $H(X)$ isomorphic to $\Z$ and every element in it other than
the identity $1_X$ acts minimally on $X$.  We call a space \emph{Slovakian} if every homeomorphism other than $1_X$ is topologically
transitive. Using such Slovakian spaces we construct a space $X$ such that $H(X)$ is topologically transitive on $X$, every
element of $H(X)$ is chain transitive, and the homeomorphism group of the Cantor set occurs as a closed, topological subgroup
of $H(X)$.

\section{Relation Dynamics}\label{rel}

It will be convenient to  use the dynamics of closed relations, and so we briefly review the ideas from
\cite{A-93}. Recall that our spaces $X, Y,$ etc. are assumed to be nonempty, compact, metrizable spaces with a fixed metric
chosen when necessary.

For spaces $X, Y$, a  relation $R : X \to Y$ is a  subset of
 $X \times Y$. The set $R$ is a \emph{relation on $X$} when $Y = X$.
   A map is a
 relation such that $R(x) = \{ y : (x,y) \in R \}$ is a singleton set for every
 $x \in X$.  For $A \subset X$, the \emph{image} $R(A)$ is defined to be $\bigcup_{x \in A} \ R(x)$.
 Equivalently, $R(A)$ is the projection to $Y$ of $R \cap (A \times Y) \subset X \times Y$.
The inverse $R^{-1} : Y \to X$ is defined to be $\{ (y,x) : (x,y) \in R \}$. For $B \subset Y$,
 we let $R^*(B) = \{ x \in X : R(x) \subset B \} = X \setminus R^{-1}(Y \setminus B)$.
 So $R^*(B) \subset R^{-1}(B) \cup R^*(\emptyset)$.
 If $R$ is a map, then $R^*(B) = R^{-1}(B)$.
 %The relation is called \emph{surjective} when
% $R(X) = Y$ and $R^{-1}(Y) = X$, or, equivalently, $R(x) \not= \emptyset$ and $R^{-1}(y) \not= \emptyset$
% for all $x \in X, y \in Y$.

 For example,
 $\bar V_{\ep} = \{ (x,y) \in X \times X : d(x,y) \leq \ep \}$ is a relation on $X$
 with $\bar V_{\ep}(x)$ the closed ball of radius $\ep$ and center $x$.  When $\ep = 0$,
 $\bar V_{\ep} \ = \ 1_X$, the identity map on $X$.

  If $R : X \to Y$ and $S : Y \to C$ are relations, then the composition $S \circ R : X \to C$ is the image under the
 projection to $X \times C$ of the set $(R \times C) \cap (X \times S) \subset X \times Y \times C$.
 Thus, $(x,c) \in S \circ R$ iff there exists $y \in Y$ such that $(x,y) \in R$ and $(y,c) \in S$.
 Composition is associative, and $(S \circ R)^{-1} \ = \ R^{-1} \circ S^{-1}$.

 A relation $R$ on $X$ is \emph{reflexive} when $1_X \subset R$, \emph{symmetric} when $R^{-1} = R$, and
 \emph{transitive} when $R \circ R \subset R$.

 For a relation $R$ on $X$, we let $R^{n+1} = R^n \circ R$ and $R^{-n} = (R^{-1})^n$ for $n = 1,2,...$,
 and let $R^0$ be the identity $ 1_X$. We define the \emph{cyclic set} $|R| = \{ x : (x,x) \in R \}$.
 %We let $Per(R) = \{ n \in \N : |R^n| \not=\emptyset \}$.
% If $A \subset X$ then
% $R \cap (A \times A)$ is the \emph{restriction} of $R$ to $A$.

 For a relation $R$ on $X$ a subset $A$ of $ X$ is called
 \emph{forward} $R$-\emph{invariant} (or $R$-\emph{invariant})  if $R(A) \subset A$ (resp. $R(A) = A$).

 For a relation $R$ on $X$, the \emph{orbit relation} is $\OO R = \bigcup_{n = 1}^{\infty} \ R^n$, and the
 \emph{orbit closure relation} $\RR R$ is defined by $\RR R(x) = \ol{\OO R(x)}$ for all $x \in X$.
 The \emph{wandering
 relation} is $\NN R = \ol{\OO R}$. Even when $R$ is a continuous map, $\RR R$ is usually not closed and
 so is a proper subset of $\NN R$.

The  \emph{chain relation} is
\begin{equation}\label{2.01aa}
\CC R \ = \  \bigcap_{\ep > 0} \ \OO (\bar V_{\ep} \circ R \circ \bar V_{\ep}).
\end{equation}

  Both  $\OO R$ and $\CC R$ are transitive relations. Since $(R^n)^{-1} = (R^{-1})^n$, it follows that
  $\OO (R^{-1}) = (\OO R)^{-1}$, $\NN (R^{-1}) = (\NN R)^{-1}$ and $\CC (R^{-1}) = (\CC R)^{-1}$.
  These operators on relations are monotone,
  i.e. they preserve inclusions, and $\OO$ and $\CC$ are idempotent.  That is,
  \begin{equation}\label{2.01}
  \OO (\OO R) \ = \ \OO R \qquad \text{ and } \qquad \CC (\CC R) \ = \ \CC R.
  \end{equation}

It then follows that
\begin{equation}\label{2.01a}
\begin{gathered}
\RR (\OO R)(x) \ = \ \ol{\OO (\OO R)(x)} \ = \ \ol{\OO R(x)} \ = \ \RR R(x), \\
\NN (\OO R) \ = \ \ol{\OO (\OO R)} \ = \ \ol{\OO R} \ = \ \NN R, \\
\CC R \ \subset \ \CC (\OO R) \ \subset \  \CC (\CC R) \ = \ \CC R.
\end{gathered}
\end{equation}

 If $R$ is a transitive relation on $X$, then $R \cap R^{-1}$ is a symmetric, transitive relation which restricts
 to an equivalence relation on $|R|$. We call the equivalence classes the \emph{basic sets} of $R$.

A \emph{closed relation} $R$  is a closed subset of $X \times Y$.
 A map is continuous iff it is a closed relation.  The composition of closed relations is
 closed, and the image of a closed set by a
 closed relation is closed. So if $B$ is open in $Y$, then $R^*(B)$ is open in $X$. For any relation $R$,
 the extensions $\NN R$ and $\CC R$ are closed relations.

 It is easy to see that $\CC R = \CC \ol{R}$, where $\ol{R}$ is the closure of $R$.
 If $R$ is a closed relation, then
 $ \CC R = \bigcap_{\ep > 0} \ \OO (\bar V_{\ep} \circ R)$, see \cite{A-93} Proposition 1.18.
 If $R$ is a closed relation on
 $X$, then $|R|$ is a closed subset of $X$. If $R$ is a closed, transitive relation,
 the basic sets $\{ R(x) \cap R^{-1}(x) : x \in |R| \}$ are closed.

 For a sequence $\{ A_n \}$ of closed sets, $\limsup \ \{ A_n \} \ = \ \bigcap_n \ol{\bigcup_{i \geq n} \ A_i}$.
 We have the identity
 $\ol{\bigcup_n \ A_n} \ = \  (\bigcup_n \ A_n) \ \cup \ \limsup \ \{ A_n \}$. If $R$ is a closed relation on $X$ and $A$ is
 a closed subset of $X$,
 then we define
 $\om R[A] \ = \ \limsup \ \{ R^n(A) \}$.

 If $A$ is forward $R$-invariant and $R$ is closed, then the sequence of closed sets $\{ R^n(A) \}$
 is decreasing and $\om R[A]$ is the intersection. Furthermore, if $y \in \om R[A]$, then $R^{-1}(y)$ meets every $\{ R^n(A) \}$,
 and so by compactness it meets $\om R[A]$ itself.  It follows that $\om R[A]$ is the maximum $R$-invariant
 subset of the closed, forward $R$-invariant set $A$.

 If $R$ is closed, then for $x \in X$ we let
 $$\om R(x) \ = \ \om R[x] \ = \ \limsup \ \{ R^n(x) \},$$
  defining the \emph{omega limit point relation}
 $\om R$ on $X$. We also define $\Om R = \ \limsup \ \{ R^n \}$ for a general relation $R$, so
  $\Om R$ is a closed relation.
When $R$ is closed, we have the following identities:
\begin{equation}
\label{2.01b}
\RR R \ = \  \OO R \cup \om R \qquad \text{ and } \qquad \NN R \ = \ \OO R \cup \Om R.
\end{equation}

 Since $\CC R$ is closed and transitive, the sequence $\{ (\CC R)^n \}$ is decreasing with intersection $\Om \CC R \ = \ \om \CC R$.

 If $R$ is closed, then the following useful identities hold for the chain relation:
 \begin{equation}\label{2.02}
 \begin{gathered}
 \CC R \ = \ \OO R \cup \Om \CC R,\\
 R \cup (\CC R) \circ R \ = \ \CC R \ = \ R \cup R \circ (\CC R),
 \end{gathered}
 \end{equation}
 see \cite{A-93} Proposition 2.4 (c) and Proposition 1.11 (d).

 It is not usually true that $(\om R)^{-1} = \om (R^{-1})$.  We write $\al R$ for $\om (R^{-1})$,
 defining the \emph{alpha limit point relation}.

 The points of $|\OO R|$ are called \emph{periodic points} for $R$, $|\RR R|$ are the \emph{recurrent points}, $ |\NN R| $ are the
 \emph{non-wandering points}, and $  |\CC R|$ are the \emph{chain recurrent points}.
 When $R$ is a closed relation, we can apply the inclusion $|\OO R| \subset |\om R|$ to \eqref{2.01b}
 and \eqref{2.02} to obtain $|\RR R| = |\om R|$, $|\NN R| = |\Omega R|$ and $|\CC R| = |\Omega \CC R|$.
 We call $x$ a \emph{transitive point} for $R$ when $\RR R(x) = X$. We denote by $\Trans(R)$
 the (possibly empty) set of transitive points.

 On $|\CC R|$, $\CC R \cap \CC R^{-1}$ is a closed equivalence relation.  We call the equivalence classes, i.e.
 the basic sets for $\CC R$, the \emph{chain components} of $R$.

 For a  relation $R$ on $X$, we will say that $R$ is
 \emph{minimal} when $\RR R = X \times X$,
 \emph{topologically transitive}  when $\NN R = X \times X$, and
 \emph{chain transitive} when
 $\CC R = X \times X$. We call $R$ \emph{central}  when $|\NN R| = X$ and \emph{chain recurrent} when $|\CC R| = X$.
 From (\ref{2.01a}) it follows that any one of these properties holds for $R$ iff it holds for $\OO R$.

Observe that if $R$ is minimal, then $X$ contains no proper closed, forward $R$-invariant subset.
If $R$ is a continuous map, then the converse is true as well since $\om R(x)$ is
then $R$-invariant and nonempty for every $x$. Thus, if $R$ is a continuous map, it is minimal iff $X = \Trans(R)$.

 A set $U$ is
\emph{inward} for a closed relation $R$ on $X$ if $U$ is closed and $R(U) \subset U^{\circ}$, the interior of $U$.
Since $R(U)$ has a positive distance from the complement of $U$, it easily follows that $U$ is inward for
$\CC R$ when it is inward for $R$. An inward set is therefore forward $\CC R$-invariant. In general, a closed set
$A$ is forward $\CC R$-invariant iff it has a neighborhood base consisting
of inward sets, see \cite{A-93} Theorem 3.3(c).
If $U$ is an inward set, then $A_+ = \om R[U] = \bigcap_{n \in \N} \ R^n(U)$ is the
associated \emph{attractor}. The set $X \setminus U^{\circ}$ is
inward for $R^{-1}$, and  $A_- = \om (R^{-1})[ X \setminus U^{\circ}]$ is the
\emph{dual repellor} for $R$. The pair $A_+, A_-$ is called an \emph{attractor-repellor pair}.
If $x \in X \setminus (A_+ \cup A_-)$, then $\Om \CC R(x) \subset A_+$ and
$\Om \CC R^{-1}(x) \subset A_-$,
see \cite{A-93} Proposition 3.9.

Notice that a clopen set is inward iff it is forward invariant. If an inward set is invariant, then it is clopen.

\begin{prop}\label{prop2.01} Let $R$ be a closed  relation on $X$.
\begin{itemize}
\item[(a)] If $A_+, A_-$ is an attractor-repellor pair, then $|\CC R| \subset A_+ \cup A_-$  and
the intersections $A_+ \cap |\CC R|$ and $A_- \cap |\CC R|$ are each clopen in $|\CC R|$.

\item[(b)] If $x \in X \setminus |\CC R|$, then there exists an attractor-repellor pair $A_+, A_-$ such that
$x \not\in A_+ \cup A_-$.

\item[(c)] If $x, y \in |\CC R|$, then $y \in \CC R(x)$ iff, for all attractors $A$, $x \in A$ implies $y \in A$.

\item[(d)] The space of chain components, i.e. the quotient space of $|\CC R|$ by the equivalence relation
$\CC R \cap \CC R^{-1}$, is a compact zero\hyp{}dimensional metric space.
\end{itemize}
\end{prop}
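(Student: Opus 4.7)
My plan uses the inward-set machinery of \cite{A-93}, Proposition 3.9 of \cite{A-93}, and the identity $\CC R = R \cup R \circ \CC R$ from \eqref{2.02}. Two preliminary observations recur: $A_+ = \om R[U] \subset U$; and $R(U) \subset U^\circ$ forces $U \subset R^*(U^\circ)$, so $R^{-1}(X \setminus U^\circ) \subset X \setminus U$, whence $A_- = \om R^{-1}[X \setminus U^\circ] \subset X \setminus U$. In particular $A_+ \cap A_- = \emptyset$. For (a), the identity $|\CC R| = |\Om \CC R|$ shows that $x \in |\CC R|$ implies $x \in \Om \CC R(x)$; if in addition $x \notin A_+ \cup A_-$, Proposition 3.9 would force $\Om \CC R(x) \subset A_+$, contradicting $x \notin A_+$. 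Hence $|\CC R| \subset A_+ \cup A_-$, and disjointness makes both traces clopen in $|\CC R|$.

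For (b), $x \notin |\CC R|$ means $x \notin \CC R(x)$. The closed, forward $\CC R$-invariant set $\CC R(x)$ admits a neighborhood base of inward sets by Theorem 3.3(c) of \cite{A-93}, so I pick an inward $U$ with $\CC R(x) \subset U^\circ$ and $x \notin U$. Then $A_+ \subset U$ excludes $x$. Since $R \subset \CC R$, we have $R(x) \subset \CC R(x) \subset U^\circ$, and forward $R$-invariance of $U$ propagates this to $R^n(x) \subset U^\circ$ for all $n \geq 1$; hence $R^n(x) \cap (X \setminus U^\circ) = \emptyset$ for every $n \geq 1$ and $x \notin \om R^{-1}[X \setminus U^\circ] = A_-$.

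For (c), take $x, y \in |\CC R|$. Forward direction: if $y \in \CC R(x)$ and $x \in A = \om R[U]$, then $y \in \CC R(x) \subset U$ by forward $\CC R$-invariance of $U$; combining $A^* \subset X \setminus U$ with $y \in A \cup A^*$ from (a) forces $y \in A$. Reverse direction: given $y \notin \CC R(x)$, the construction of (b) produces an inward $U$ with $\CC R(x) \subset U^\circ$ and $y \notin U$, so $A = \om R[U] \subset U$ misses $y$. To see $x \in A$, iterate the identity from \eqref{2.02} to $\CC R = R \cup R^2 \cup \cdots \cup R^n \cup R^n \circ \CC R$. Applied to $(x,x) \in \CC R$, either $(x,x) \in R^k$ for some $k \leq n$, in which case $(x,x) \in R^{jk}$ for $jk \geq n$ and $x \in R^{jk}(U) \subset R^n(U)$; or there is $z \in \CC R(x) \subset U$ with $x \in R^n(z) \subset R^n(U)$. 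Either way $x \in \bigcap_n R^n(U) = A$.

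For (d), $\CC R \cap \CC R^{-1}$ is a closed equivalence relation on $|\CC R|$, so the quotient is compact and metrizable. For zero-dimensionality, given distinct chain components $C \neq C'$ with representatives $x, x'$ and (without loss) $x' \notin \CC R(x)$, part (c) furnishes an attractor $A$ with $x \in A$ and $x' \notin A$; by (a), $A \cap |\CC R|$ is clopen in $|\CC R|$, and by the forward direction of (c) it is saturated by chain components. Its image in the quotient is a clopen set separating $C$ from $C'$. The main obstacle, I expect, is the reverse direction of (c)---extracting from the chain-recurrence $(x,x) \in \CC R$ the precise assertion $x \in R^n(U)$ for every $n$. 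Once the iterated form of $\CC R = R \cup R \circ \CC R$ performs this ``chain-to-orbit'' passage, the remaining claims reduce to routine inward-set bookkeeping and the clopen separations provided by (a).
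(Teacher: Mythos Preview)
Your proof is correct. For (a) you argue essentially as the paper does, invoking $|\CC R| = |\Om \CC R|$ and Proposition~3.9 of \cite{A-93}; your clopenness argument via disjoint closed complementary pieces is a minor variant of the paper's observation that $U^\circ \cap |\CC R| = A_+ \cap |\CC R|$.

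The substantive difference is in (b) and (c): the paper simply cites Proposition~3.11 of \cite{A-93}, whereas you supply self-contained arguments. Your proof of (b) via the inward neighborhood base of $\CC R(x)$ is the natural one. Your reverse direction in (c) is the most interesting piece: extracting $x \in \bigcap_n R^n(U)$ from chain recurrence by iterating $\CC R = R \cup R \circ \CC R$ to $\CC R = R \cup \cdots \cup R^n \cup R^n \circ \CC R$ and then splitting into the periodic case (where $R^{jk}(U) \subset R^n(U)$ by monotonicity of the decreasing sequence $\{R^n(U)\}$) and the case where a witness $z \in \CC R(x) \subset U$ gives $x \in R^n(z) \subset R^n(U)$. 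This is a clean, elementary route that avoids importing the full attractor machinery from \cite{A-93}; the price is a slightly longer argument, while the gain is that the reader sees exactly how the algebraic identity \eqref{2.02} does the work. Part (d) is handled the same way in both.

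One small remark: in (c) you invoke ``the construction of (b)'' to produce $U$ with $y \notin U$, but in (b) the excluded point was $x$ itself. You are of course just reusing the neighborhood-base step with $y$ in place of the excluded point; it would be clearer to say so explicitly rather than pointing back to (b), where the hypothesis $x \notin |\CC R|$ is different.
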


\begin{proof}
(a) If $x \in X \setminus (A_+ \cup A_-)$, then $\Omega \CC R(x) \subset A_+$, and so
$x \not\in \Omega \CC R(x)$. If $U$ is an inward set with $\om R[U] = A_+$, then $U^{\circ} \cap |\CC R| = A_+ \cap |\CC R|$.

(b), (c) These are part of Proposition 3.11 of \cite{A-93}.

(d) From (b) and (c) applied to $R$ and $R^{-1}$ it follows that every chain component $C$ is the intersection of
$\{ A \cap |\CC R| : A$ is an attractor or repellor and $C \subset A \}$. Hence, the attractors and repellors induce
a clopen subbase on the space of chain components.
\end{proof}

For a relation $R$ on a space $X$ we say that $X$ is $R$-\emph{decomposable} if there is a
proper, forward $R$-invariant decomposition $\{A_1, A_2 \}$ of $X$, i.e.
 each is a nonempty, closed, forward $R$-invariant subset and $A_1 \cap A_2 = \emptyset, \ A_1 \cup A_2 = X$.
 Such a pair of proper clopen sets is
 called an $R$-\emph{decomposition}. Since the sets are clopen and forward $R$-invariant,
 they are inward for $R$.  Hence, an $R$-decomposition  is a $\CC R$-decomposition.
 So the notion of decomposability is the same for $R, \OO R, \RR R, \NN R$ and $\CC R$.

 If no such a decomposition exists, $X$ is said to be
 $R$-\emph{indecomposable}.

For any relation $R$ on $X$ let $R_{\pm} = R \cup 1_X \cup R^{-1}$. This is a reflexive
and symmetric  relation on $X$ so that $\OO (R_{\pm})$ and
$\CC (R_{\pm})$ are equivalence relations on $X$.

\begin{prop}\label{prop2.02} For a relation $R$ on $X$, the following conditions are equivalent:
\begin{itemize}
\item[(i)] The space $X$ is $R$-indecomposable.
\item[(ii)] The space $X$ is $R_{\pm}$-indecomposable.
\item[(iii)] The relation $R_{\pm}$ is chain transitive.
\end{itemize}
\end{prop}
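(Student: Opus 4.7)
The approach is to handle the two easy equivalences $\mathrm{(i)} \Leftrightarrow \mathrm{(ii)}$ and $\mathrm{(iii)} \Rightarrow \mathrm{(ii)}$ by direct set\hyp{}theoretic arguments, and reserve the bulk of the work for $\mathrm{(ii)} \Rightarrow \mathrm{(iii)}$, where I will lean on the zero\hyp{}dimensionality of the quotient by chain components established in Proposition \ref{prop2.01}(d). For $\mathrm{(i)} \Leftrightarrow \mathrm{(ii)}$, one direction is immediate from $R \subset R_\pm$. For the other, if $\{A_1, A_2\}$ is an $R$-decomposition then $R(A_2) \subset A_2$ combined with $A_1 = X \setminus A_2$ gives $R^{-1}(A_1) \subset A_1$; together with $R(A_1) \subset A_1$ and $1_X(A_1) \subset A_1$ this shows $R_\pm(A_1) \subset A_1$, and symmetrically for $A_2$, so $\{A_1, A_2\}$ is already an $R_\pm$-decomposition.

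For $\mathrm{(iii)} \Rightarrow \mathrm{(ii)}$ I argue by contradiction. Given an $R_\pm$-decomposition $\{A_1, A_2\}$, choose $\epsilon$ smaller than half the distance from $A_1$ to $A_2$. Any sequence $x_0, x_1, \ldots, x_n$ with $(x_k, x_{k+1}) \in \bar V_\epsilon \circ R_\pm \circ \bar V_\epsilon$ and $x_0 \in A_1$ must inductively remain in $A_1$: smallness of $\epsilon$ keeps both the $\bar V_\epsilon$-neighbors out of $A_2$, hence in $A_1$, and forward $R_\pm$-invariance of $A_1$ advances the chain inside $A_1$. Thus no $\epsilon$-chain crosses from $A_1$ to $A_2$, contradicting $\CC R_\pm = X \times X$.

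The substantive step is $\mathrm{(ii)} \Rightarrow \mathrm{(iii)}$, which I approach contrapositively. Assuming $\CC R_\pm \neq X \times X$, I need to manufacture a proper clopen $R_\pm$-invariant partition of $X$. Reflexivity and symmetry of $R_\pm$ propagate to $\CC R_\pm$, making it a closed equivalence relation on all of $X$; its equivalence classes are exactly the chain components. By Proposition \ref{prop2.01}(d) the quotient $X / \CC R_\pm$ is a compact zero\hyp{}dimensional metrizable space which, by assumption, contains at least two points and therefore splits into two proper nonempty clopen pieces. Pulling these back through the continuous quotient map yields the desired $R_\pm$-decomposition of $X$, since any union of $\CC R_\pm$-equivalence classes is automatically $R_\pm$-invariant. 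The key obstacle here is producing a \emph{clopen} separation from a merely closed chain component structure; this is where the zero\hyp{}dimensionality of the quotient, which is the content of part (d), does all the work.
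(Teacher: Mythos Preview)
Your proof is correct and follows essentially the same route as the paper's: the equivalence $\mathrm{(i)} \Leftrightarrow \mathrm{(ii)}$ via complements, and the key implication $\mathrm{(ii)} \Rightarrow \mathrm{(iii)}$ via the zero\hyp{}dimensionality of the chain\hyp{}component quotient from Proposition~\ref{prop2.01}(d), match the paper exactly. The only cosmetic difference is in $\mathrm{(iii)} \Rightarrow \mathrm{(ii)}$, where you spell out the $\epsilon$-chain argument directly while the paper invokes the fact that a clopen forward\hyp{}invariant set is inward and hence $\CC R_\pm$-invariant; both arguments encode the same observation.
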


\begin{proof}
(i) $\Leftrightarrow$ (ii): If a set is forward $R$-invariant,
then its complement is forward $R^{-1}$-invariant.
Hence, an $R$-decomposition  is an $R_{\pm}$-decomposition. Clearly, the reverse is true since $R \subset R_{\pm}$.

(iii) $\Rightarrow$ (ii): If $\{A_1, A_2 \}$ is a decomposition, then each is inward for $R_{\pm}$ and
therefore forward $\CC R_{\pm}$-invariant.
Hence, $R_{\pm}$ is not chain transitive.

(ii) $\Rightarrow$ (iii): If $R_{\pm}$ is not chain transitive, then the equivalence
relation $\CC R_{\pm}$ on $X$ has more than one equivalence
class.  By Proposition \ref{prop2.01} (d) the space of equivalence classes is zero-dimensional.
Hence, there is a pair of disjoint, nonempty,
clopen sets $\{A_1, A_2 \}$ which cover $X$ such that each is a union of equivalence classes. Since each equivalence class is
$\CC R_{\pm}$-invariant, it follows that $\{ A_1, A_2 \}$ is a $R_{\pm}$-decomposition.
\end{proof}

\begin{df}\label{df2.02a} Let $\pi : X_1 \to X_2$ be a continuous map between spaces, and let $R_i$
be a relation on $X_i$ for $i = 1,2$.  We say that \emph{$\pi$ maps $R_1$ to $R_2$} if $\pi \circ R_1 \subset R_2 \circ \pi$
and that \emph{$\pi$ is a semi-conjugacy from $R_1$ to $R_2$} if
 $\pi \circ R_1 = R_2 \circ \pi$. \end{df}

\begin{prop}\label{prop2.02b} Let $\pi : X_1 \to X_2$ be a continuous map between spaces, and let $R_i$
be a relation on $X_i$ for $i = 1,2$.
\begin{itemize}
\item[(a)] The function $\pi$ maps $R_1$ to $R_2$ iff $(\pi \times \pi)(R_1) \subset R_2$. If $\pi$ is a semi-conjugacy from
$R_1$ to $R_2$ and $\pi(X_1) = X_2$, i.e. $\pi$ is surjective, then $(\pi \times \pi)(R_1) = R_2$.

\item[(b)] If $R_i$ is a mapping for $i = 1,2$, then $\pi$ is a semi-conjugacy from $R_1$ to $R_2$ if it maps $R_1$ to $R_2$.

\item[(c)] If $\pi$ maps $R_1$ to $R_2$, then $\pi$ maps $R_1^n$ to $R_2^n$ for all $n \in \Z$ and maps
$\A R_1$ to $\A R_2$ for $\A = \OO, \RR, \NN$ and $\CC$.

\item[(d)] Assume $\pi$ is surjective and maps $R_1$ to $R_2$.  If $R_1$ is minimal, topologically transitive, central, chain transitive
or chain recurrent, then $R_2$ satisfies the corresponding property.

\item[(e)] If $\pi$ is a semi-conjugacy from  $R_1$ to $R_2$ and $\pi$ is an open map, then  $\pi$ is a semi-conjugacy from
 $\A R_1$ to $\A R_2$ for $\A = \OO, \RR, \NN$ and $\CC$.
\end{itemize}
\end{prop}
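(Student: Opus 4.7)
Parts (a) and (b) are essentially definition chasing. For (a), I unpack $(x_1,x_2) \in R_1$: then $x_2 \in R_1(x_1)$, so $\pi(x_2) \in \pi(R_1(x_1)) \subset R_2(\pi(x_1))$, giving $(\pi \times \pi)(R_1) \subset R_2$; conversely, this inclusion immediately yields $\pi \circ R_1 \subset R_2 \circ \pi$. When $\pi$ is a surjective semi-conjugacy, every $(y_1,y_2) \in R_2$ satisfies $y_1 = \pi(x_1)$ for some $x_1$, and then $y_2 \in R_2(\pi(x_1)) = \pi(R_1(x_1))$, so $y_2 = \pi(x_2)$ with $(x_1,x_2) \in R_1$. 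For (b), if $R_i$ is a map, the domain of both $\pi \circ R_1$ and $R_2 \circ \pi$ is all of $X_1$ and each is single-valued; the inclusion $\pi \circ R_1 \subset R_2 \circ \pi$ therefore forces equality.

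For (c), induction using associativity of composition gives $\pi \circ R_1^n \subset R_2^n \circ \pi$ for $n \ge 1$. Applying (a) to $R_1^{-1}$ shows $\pi$ also maps $R_1^{-1}$ to $R_2^{-1}$, so the result extends to negative exponents. This immediately yields the statement for $\OO$. For $\RR$, since $\pi$ is continuous, $\pi(\overline{\OO R_1(x)}) \subset \overline{\pi(\OO R_1(x))} \subset \overline{\OO R_2(\pi(x))}$; and analogously for $\NN R$, using $\NN R = \overline{\OO R}$ as a subset of $X \times X$. For $\CC$, the key input is uniform continuity of $\pi$: given $\delta > 0$ choose $\epsilon$ with $d(a,b) \le \epsilon \Rightarrow d(\pi(a),\pi(b)) \le \delta$, so any $\bar V_\epsilon \circ R_1 \circ \bar V_\epsilon$-chain from $x$ to $y$ in $X_1$ pushes forward to a $\bar V_\delta \circ R_2 \circ \bar V_\delta$-chain from $\pi(x)$ to $\pi(y)$ in $X_2$.

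For (d), I use that surjectivity of $\pi$ makes $\pi \times \pi$ surject onto $X_2 \times X_2$. If $\A R_1 = X_1 \times X_1$ then by (a) and (c) applied to $\A$, $X_2 \times X_2 = (\pi \times \pi)(\A R_1) \subset \A R_2$; this handles minimal, topologically transitive, and chain transitive. Central and chain recurrent follow similarly by chasing $(\pi(a),\pi(a))$ through the corresponding cyclic set.

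Part (e) is the main point, and the hard case is $\CC$. By (c) only the reverse inclusion $\A R_2 \circ \pi \subset \pi \circ \A R_1$ needs work. For $\OO$, since $\pi \circ R_1 = R_2 \circ \pi$ the induction of (c) gives equality $\pi \circ R_1^n = R_2^n \circ \pi$, so $\pi(\OO R_1(x)) = \OO R_2(\pi(x))$; openness is not needed here. For $\RR$, compactness of $\overline{\OO R_1(x)}$ makes $\pi(\overline{\OO R_1(x)})$ a closed set containing $\pi(\OO R_1(x))$, so equal to its closure. For $\NN$ and $\CC$, I need the following uniform openness lemma, proved by a standard compactness argument: for every $\delta > 0$ there is $\epsilon > 0$ such that $B_\epsilon(\pi(x')) \subset \pi(B_\delta(x'))$ for every $x' \in X_1$. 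With this in hand, an $\bar V_\epsilon \circ R_2 \circ \bar V_\epsilon$-chain $\pi(x) = x_0,a_1,b_1,x_1,\dots,b_n,x_n = y$ in $X_2$ lifts step by step to a $\bar V_\delta \circ R_1 \circ \bar V_\delta$-chain in $X_1$: at each stage uniform openness lifts the $\bar V_\epsilon$ steps to $\bar V_\delta$ steps, and the already-established equality $\pi(R_1(a_i')) = R_2(\pi(a_i'))$ lifts the $R_2$ step. The endpoint of the lifted chain lies in $\pi^{-1}(y)$. Letting $\delta \to 0$ and extracting a convergent subsequence of these endpoints produces $y' \in \pi^{-1}(y)$; a short truncation argument (replacing the final lifted point of a $\bar V_{\delta'}$-chain by $y'$ when $\delta'$ is sufficiently smaller than $\delta$) shows $y' \in \CC R_1(x)$. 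The same uniform openness handles $\NN$ by lifting $a_n \to \pi(x)$ to $x_n' \to x$ and using $b_n \in \pi(\OO R_1(x_n'))$ with a compactness extraction. The principal obstacle throughout is precisely this uniform lifting, which is why openness enters the hypothesis of (e) but not (c)--(d).
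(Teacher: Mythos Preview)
Your proof is correct and follows essentially the same route as the paper's. Parts (a)--(d) match almost verbatim. For (e), both you and the paper reduce to the same ``uniform openness'' fact $\bar V_{\ep} \circ \pi \subset \pi \circ \bar V_{\del}$ (with your $\del,\ep$ swapped relative to the paper's) and then lift chains step by step using the semi-conjugacy to handle the $R$-steps and uniform openness to handle the $\bar V$-steps.

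The one place you go beyond the paper is the endpoint issue in the $\CC$ case: the lifted $\del$-chain terminates at some $y'_{\del} \in \pi^{-1}(y)$ that may depend on $\del$, and one needs a fixed $y'$ lying in $\CC R_1(x)$. You handle this explicitly via a subsequence extraction and truncation argument, which is correct. The paper simply writes ``every $\del$-chain for $R_2$ can be lifted to an $\ep$-chain for $R_1$ with a given initial lift. That is, $\pi$ is a semi-conjugacy from $\CC R_1$ to $\CC R_2$,'' leaving this compactness step implicit. So your version is, if anything, more careful at that point; otherwise the arguments coincide.
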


\begin{proof}
(a) Firstly, $\pi$ maps the relation $R_1$ to $R_2$ iff $(x,y) \in R_1$ implies $(\pi(x),\pi(y)) \in R_2$,
 and so the first equivalence
 is clear. The second result is easy to check.

 (b) An inclusion between functions is an equation.

 (c) Given $\ep > 0$, there exists $\del > 0$ such that $(\pi \times \pi)(\bar V_{\del }) \subset V_{\ep }$ since $\pi$ is uniformly
 continuous. If $\{ (x_n,y_n) \in R_1 \}$ is a finite or infinite sequence with $(y_n,x_{n+1}) \in \bar V_{\del }$, then
 $\{ (\pi(x_n),\pi(y_n)) \in R_2 \}$ with $(\pi(y_n),\pi(x_{n+1})) \in \bar V_{\ep }$. That is, $\del$-chains for $R_1$ are
 mapped to $\ep$-chains for $R_2$.  It follows that $\pi$ maps $\CC R_1$ to $\CC R_2$.  The others are easy to check.

 (d) If $\pi$ is surjective and maps $\A R_1$ to $\A R_2$, then $X_1 \times X_1 = \A R_1$ implies $X_2 \times X_2 = \A R_2$,
 and $1_{X_1} \subset \A R_1$ implies $1_{X_2} \subset \A R_2$.

 (e) The function $1_{X_1} \times \pi$ maps $1_{X_1}$ to $\pi \subset X_1 \times X_2$.  If $\pi$ is open and $\ep > 0$, then
$1_{X_1} \times \pi$ maps $\bar V_{\ep}$ to a neighborhood of $\pi$ and so contains $\bar V_{\del } \circ \pi$ for some $\del > 0$.
That is, $ \bar V_{\del } \circ \pi \subset \pi \circ \bar V_{\ep} $.  If $\{ (u_n,v_n) \in R_2 \}$ is a finite or infinite sequence
with $(v_n,u_{n+1}) \in \bar V_{\del }$ and $\pi(x_n) = u_n$, then because $\pi$ is a semi-conjugacy on $R_1$, there
exists $y_n \in X_1$ such that $(x_n,y_n) \in R_1$ and $\pi(y_n) = v_n$, and there exists $x_{n+1}$ such that
$(y_n,x_{n+1}) \in \bar V_{\ep}$ and $\pi(x_{n+1}) = u_{n+1}$. Thus, every $\del$-chain for $R_2$ can be lifted to an
$\ep$-chain for $R_1$ with a given initial lift. That is, $\pi$ is a semi-conjugacy from $\CC R_1$ to $\CC R_2$.
The cases of $\A = \OO, \RR $ and $\NN$ are similar. In fact, for $\OO$ and $\RR$ it is not necessary that the map be open.
\end{proof}

\begin{remark}
Suppose $R_i = 1_{X_i}$ with $X_2 = [0,1]$ and $X_1 = \{ -1 \} \cup [0,1]$, and let $\pi : X_1 \to X_2$ be an extension of
the identity on $[0,1]$, mapping $-1$ to some point of $[0,1]$. The map $\pi$ is a semi-conjugacy from $R_1$ to $R_2$
and maps $\CC R_1$ onto $\CC R_2$, but it is not a semi-conjugacy from $\CC R_1$ to $\CC R_2$.
\end{remark}

As indicated in the Introduction, our concern is with homeomorphisms.  However, we will apply this machinery to
three different relations.

Let $H(X)$ be the homeomorphism group of $X$.  For $f \in H(X)$, we define $f_{\pm}$
as the closed relation $f \cup 1_X \cup f^{-1}$. Clearly, we have
\begin{equation}\label{2.03}
\begin{split}
\OO f \ &= \ \{ f^n : n \in \N \}, \\
\OO f_{\pm} \ &= \ \{ f^n : n \in \Z \} \ = \ \OO f \cup 1_X \cup \OO f^{-1}, \\
\RR f(x) \ &= \ \ol{\{ f^n(x) : n \in \N \}}, \\
\RR f_{\pm}(x) \ &= \ \ol{\{ f^n(x) : n \in \Z \}} \quad \text{for } x \in X, \\
\NN f_{\pm} \ &= \ \NN f \cup 1_X \cup \NN f^{-1}, \\
\RR f_{\pm} \ &= \ \RR f \cup 1_X \cup \RR (f^{-1}).
\end{split}
\end{equation}
On the other hand, $\CC f_{\pm}$ is in general larger than $\CC f \cup 1_X \cup \CC f^{-1}$.
The latter relation need not be transitive. See Example \ref{ex2.06} below.
%For example,
%with $X = [-1,1]$ and $f(t) = t^{3}$ then $\{ 0 \}$ is an attractor with dual repellor $\{ -1, 1 \}$.
%It is easy to see that $X$ is $f_{\pm}$ chain transitive, i. e. $\CC f_{\pm} = X \times X$, but
%$\CC f \cup 1_X \cup \CC f^{-1} \ \subset \ [-1,0]^2 \cup [0,1]^2$.

Since $1_X \subset \OO f_{\pm}$, every point is recurrent for  $f_{\pm}$.

An action of a group $G$  on $X$ is a homomorphism $\rho : G \to H(X)$. If $G$ is a subgroup of $H(X)$,
then $G$ acts on $X$ by evaluation. That is, $\rho$ is the inclusion.
With the action understood we let $h_G = \bigcup \{ f \in \rho(G) \}$.  This is just the orbit relation of the action of
 $G$ on $X$. It is an equivalence relation but usually not closed.  Since it is an equivalence
relation, $\OO h_G = h_G$ and $\NN h_G = \ol{h_G}$. Since $h_G$ is reflexive, every point is recurrent for $h_G$.

If $G$ is the cyclic group generated by a homeomorphism $f$, then $h_G = \OO f_{\pm}$.

When $G = H(X)$, we write $h_X$ for $h_G$.  Clearly, $H(X)$\hyp{}decomposability as described in the Introduction is
just $h_X$-decomposability.

For a space $X$, let $\Iso(X)$ denote the set of isolated points of $X$.

\begin{prop}\label{prop2.03} Let $f \in H(X)$ and $\rho : G \to H(X)$ be an action of a group $G$ on $X$.
Let $\BB$ be a countable base of nonempty open sets for
$X$, and let $\BB^*$ be the collection of finite covers of $X$ by elements of $\BB$.
\begin{itemize}
\item[(a)] The homeomorphism $f$ is central iff the $G_{\del}$ set of recurrent points, $|\om f|$, is dense.
\begin{equation}\label{2.04}
|\om f| \ = \  \bigcap_{\A \in \BB^*, \ n \in \N} \ \bigcup_{U \in \A, \ i \geq n} \{ U \cap f^{-i}(U) \}.
\end{equation}
If $f$ is central, then any isolated point $x$ of $X$ is a periodic point for $f$.

\item[(b)] The homeomorphism $f$ is topologically transitive iff the set of transitive points, $\Trans(f)$, is nonempty, in which case
\begin{align}\label{2.05}
\Trans(f) \ &= \ \{ x : \om f(x) \ = \ X \} \\
&= \ \bigcap_{U \in \BB, \ n \in \N} \ \bigcup_{i \geq n} \{  f^{-i}(U) \}
\end{align}
is a dense $G_{\del }$ subset of $X$. If $f$ is topologically transitive, then $f^{-1}$ is topologically transitive. If $f$ is
topologically transitive, then $X$ is perfect or consists of a single periodic orbit for $f$.

\item[(c)] The relation $f_{\pm}$ is topologically transitive iff the set of transitive points,
$\Trans(f_{\pm})$, is nonempty, in which case
\begin{equation}\label{2.06}
\Trans(f_{\pm}) \ = \ \bigcap_{U \in \BB} \ \bigcup_{ \ n \in \Z} \{  f^{-n}(U) \}
\end{equation}
is a dense $G_{\del}$ subset of $X$. If $f_{\pm}$ is topologically transitive and $x$ is an isolated point of $X$, then
\begin{equation}\label{2.06a}
\Trans(f_{\pm}) \ = \ \OO f_{\pm}(x) \ = \ \Iso(X).
\end{equation}
Thus, if $f_{\pm}$ is topologically transitive, then either $\Iso(X) = \emptyset$, so $X$ is perfect, or
else $\Iso(X)$ is dense. If $\Iso(X)$ is finite and nonempty, then $X $ consists of a single periodic orbit.

\item[(d)] The following are equivalent:
\begin{enumerate}
\item[(i)] $f$ is topologically transitive;

\item[(ii)] $f_{\pm}$ is topologically transitive and $f$ is central;

\item[(iii)] $f_{\pm}$ is topologically transitive and $X$ is either perfect or consists of a single periodic orbit for $f$.
\end{enumerate}
In that case, $\Trans(f_{\pm}) = \Trans(f) \cup \Trans(f^{-1})$.

\item[(e)] The relation $h_G$ is topologically transitive iff the set of transitive points, $\Trans(h_G)$, is nonempty, in which case
\begin{equation}\label{2.07}
\Trans(h_{G}) \ = \ \bigcap_{U \in \BB} \ \bigcup_{ \ f \in G} \{  f^{-1}(U) \}
\end{equation}
is a dense $G_{\del }$ subset of $X$.
\end{itemize}
\end{prop}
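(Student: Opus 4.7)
The plan is Baire category throughout. Each set in question — $|\om f|$, $\Trans(f)$, $\Trans(f_{\pm})$, $\Trans(h_G)$ — I would describe by the stated countable intersection of open sets, which matches the defining condition after direct unpacking: for instance $x\in|\om f|$ iff for every finite cover $\A$ and every $n$ the forward orbit returns after time $n$ to some $U\in\A$ containing $x$, and $x\in\Trans(f)$ iff $\om f(x)=X$, equivalently the forward orbit enters every base element infinitely often. Openness of each factor is immediate from continuity of $f$, $f^{-1}$, and the $\rho(g)$.

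Density of each factor uses the hypothesis differently per part. In (a), for a nonempty open target $W$, I would pick $x\in W$ and $U\in\A$ with $x\in U$; centrality at $x$ applied to $W\cap U$ yields a return after time $i\geq n$ (periodic return times iterate to exceed any prescribed $n$). In (b), the set $\bigcup_{j\geq 0}f^{-j}(U)$ is open, and if it missed a nonempty open $V$ then $f^j(V)\cap U=\emptyset$ for all $j\geq 0$, contradicting topological transitivity; hence $\bigcup_{i\geq n}f^{-i}(U)=f^{-n}\bigl(\bigcup_{j\geq 0}f^{-j}(U)\bigr)$ is dense. In (c) and (e) the analogous unions $\bigcup_{n\in\Z}f^{-n}(U)$ and $\bigcup_{g\in G}\rho(g)^{-1}(U)$ are $f$- or $G$-invariant, so density is immediate from transitivity. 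The perfect-or-single-periodic-orbit dichotomy invoked in (b) and (c) I would derive from (a): an isolated point is periodic, its forward orbit is a finite clopen $f$-invariant set, hence equals $X$ by topological transitivity. In (c), applying the formula to the base element $\{x\}$ at isolated $x$ gives $\OO f_{\pm}(x)$ dense; since $f$ permutes $\Iso(X)$ we get $\OO f_{\pm}(x)=\Iso(X)=\Trans(f_{\pm})$, and finiteness of $\Iso(X)$ forces $X=\Iso(X)$. The converse implications (nonemptiness of $\Trans$ gives topological transitivity of the relation) go by pair approximation: given $(u,v)\in X\times X$, density of the orbit of a transitive point $x$ yields iterates near $u$ and $v$, an element of the relation near $(u,v)$. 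That $f^{-1}$ is topologically transitive when $f$ is follows from $\NN(f^{-1})=(\NN f)^{-1}$.

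The hard part will be (d)(iii)$\Rightarrow$(i). The single periodic orbit case is trivial, so assume $X$ perfect and $f_{\pm}$ topologically transitive; I would argue in two substeps. \emph{Step 1.} $f$ is central. Given $x$ and a neighborhood $U$, perfectness supplies $y\in U\setminus\{x\}$; since $(x,y)\in\NN f_{\pm}\setminus 1_X$, either $(x,y)\in\NN f$ or $(y,x)\in\NN f$, and approximating pairs $(x_k,f^{n_k}(x_k))\to(x,y)$ (or the swap) with $n_k\geq 1$ eventually have both coordinates in $U$, yielding $U\cap f^{-n_k}(U)\neq\emptyset$. \emph{Step 2.} $f$ central together with $f_{\pm}$ topologically transitive imply $f$ topologically transitive. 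For nonempty open $U,V$, the $f_{\pm}$ hypothesis yields $m\in\Z\setminus\{0\}$ and $u\in U$ with $f^m(u)\in V$; if $m\geq 1$ we are done, otherwise $v:=f^m(u)\in V$ and $f^{|m|}(v)=u\in U$. Centrality at $v$ supplies $v_j\to v$ in $V$ with $f^{k_j}(v_j)\in V$ and $k_j$ unbounded (a bounded $k_j$ would force $v$ periodic, in which case arbitrarily large returns exist anyway). For large $j$, continuity places $f^{|m|}(v_j)\in U$, and $f^{k_j-|m|}\bigl(f^{|m|}(v_j)\bigr)=f^{k_j}(v_j)\in V$ with $k_j-|m|\geq 1$. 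The closing identity $\Trans(f_{\pm})=\Trans(f)\cup\Trans(f^{-1})$ holds because if neither $\ol{\{f^n(x):n\geq 0\}}$ nor $\ol{\{f^n(x):n\leq 0\}}$ equals $X$, both are proper closed forward- (respectively backward-) $f$-invariant subsets with empty interior by topological transitivity of $f$ (respectively $f^{-1}$), hence nowhere dense; yet their union is $X$, contradicting Baire.
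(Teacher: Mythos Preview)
Your proof is correct, and for parts (a), (b), (c), (e) it tracks the paper's argument closely (the paper packages the ``return times can be made arbitrarily large'' step as a lemma: if $N_f(U,V)$ were finite with maximum $n$, then $W=U\cap f^{-n}(V)$ would satisfy $N_f(W,V)=\emptyset$; your iterated-return argument in (a) is equivalent).

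The genuine divergence is in (d). The paper proves (iii)$\Rightarrow$(i) and the identity $\Trans(f_{\pm})=\Trans(f)\cup\Trans(f^{-1})$ in a single stroke: take $x\in\Trans(f_{\pm})$; perfectness lets one drop finitely many orbit points without losing density, so intersecting over $n$ gives $\om f(x)\cup\al f(x)=X$; then $x$ lies in one of these closed $f$-invariant sets, which therefore equals $X$ since it contains the full two-sided orbit closure. This immediately yields both $f$ topologically transitive and the $\Trans$ identity. Your route instead factors through (ii): Step~1 derives centrality from perfectness and the decomposition $\NN f_{\pm}=\NN f\cup 1_X\cup\NN f^{-1}$, and Step~2 upgrades centrality plus $f_{\pm}$-transitivity to $f$-transitivity by a direct hitting-time argument that pushes a backward hit forward using an unbounded return at the target. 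You then prove the $\Trans$ identity separately via Baire (two proper orbit closures would be nowhere dense under transitivity of $f$ and $f^{-1}$, yet cover $X$). The paper's argument is shorter and more conceptual, exploiting the invariance of $\om$ and $\al$ sets; yours is more elementary and makes the implication (ii)$\Rightarrow$(i) explicit, which the paper leaves implicit inside the (iii)$\Rightarrow$(i) step.
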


\begin{proof}
For a relation $R$ on $X$ and subsets $U,V \subset X$, let
\begin{equation}\label{2.08}
\begin{split}
N_R(U,V) \ &= \ \{ n \in \N : R^n(U) \cap V \not= \emptyset \} \\
&= \ \{ n \in \N : U \cap R^{-n}(V) \not= \emptyset \}.
\end{split}
\end{equation}
Clearly, $R$ is central iff $N_R(U,U) \not= \emptyset$ for all nonempty open subsets $U$, and $R$ is topologically transitive
iff $N_R(U,V) \not= \emptyset$ for all nonempty open subsets $U, V$. If $R = f \in H(X)$ and $N_f(U,V)$ is finite for
some open $U, V$, then with $n = \max N_f(U,V)$ we let $W = U \cap f^{-n}(V)$. Then $W$ is a nonempty open set and
$N_f(W,V) = \emptyset$. Hence, if $f$ is central, then $N_f(U,U)$ is infinite, and if $f$ is topologically transitive, then
$N_f(U,V)$ is infinite for all nonempty open $U,V$.

The equations (\ref{2.04})-(\ref{2.08}) are easy to check, and density follows from the Baire Category Theorem.

Now assume that $f_{\pm}$ is topologically transitive and that $x$ is an isolated  point of $X$. If $V$ is a nonempty open set, then
$N_{f_{\pm}}(\{ x \}, V) = \emptyset$ unless $V$ meets the  orbit $\OO f_{\pm}(x)$. Hence,
$x \in \Trans(f_{\pm})$. If $y$ is another isolated point, then $N_{f_{\pm}}(\{x \},\{ y \}) \not= \emptyset $ implies that
$y$ is in the $f_{\pm}$ orbit of $x$.  Thus, (\ref{2.06a}) holds. In particular, $\Iso(X)$ is dense if it is nonempty.
If it is finite and nonempty, then $X = \Iso(X)$ because the latter is closed and dense. Since the elements of $\Iso(X)$
lie on a single orbit, $X$ consists of a single periodic orbit.

%
%Furthermore, if $x$ is a periodic point for $f$ then $X$ is finite and consists of the periodic orbit of $x$.
If $f$ is central and $x \in \Iso(X)$, then $N_f(\{ x \},\{ x \}) \not= \emptyset$ iff $x$ is a periodic point.
Thus, if $f_{\pm}$ is transitive and $f$ is central, then either $X$ is perfect or it consists of a single periodic orbit,
proving the implication (ii) $\Rightarrow$ (iii) in (d).
Since (i) obviously implies
(ii) in (d), it follows that if $f$ is topologically transitive, then either $X$ is perfect or consists of a single
periodic orbit. So if  $\RR f(x) = X$, then $\ol{ \{ f^i(x) : i \geq n \} } \ = \ X$ for every $n \in \N$. Intersecting, we see that
$\om f(x) = X$.  Thus, $\Trans(f) = \{ x : \om f(x) = X \}$.

Finally, if $X$ is a single periodic orbit, then $f$ is topologically transitive and every point is a transitive point for $f$ and
$f^{-1}$.  Now assume that $X$ is perfect and that $f_{\pm}$ is transitive. If $x \in \Trans(f_{\pm})$, then
$\ol{ \{ f^i(x) : i \in \Z, |i| \geq n \} } \ = \ X$ for all $n \in \N$. Intersecting, we obtain that $\om f(x) \cup \al f(x) = X$.
In particular, $x$ is in one of these. If $x$ is contained in a closed invariant set $A$ like $\om f(x)$ or $\al f(x)$, then
$X = \RR (f_{\pm})(x) \subset A$. Thus, $x \in \Trans(f) \cup \Trans(f^{-1})$, so either $f$ or $f^{-1}$ is topologically
transitive.  But $\NN (f^{-1}) = (\NN f)^{-1}$ implies that $f^{-1}$ is topologically transitive if $f$ is.
\end{proof}

\begin{remark} There are various, slightly conflicting, definitions of topological transitivity. These are sorted out
in \cite{A-C}. We are following \cite{A-93}.
\end{remark}

\begin{lem}\label{lem2.04} (a) Let $f \in H(X)$ and $x \in X$. The sets $\om f(x)$ and $\al f(x)$ are $f$-invariant.
If $x \in |\CC f|$, then $\CC f(x)$ and $\CC f^{-1}(x)$ are $f$-invariant.

(b) The chain components of $1_X$ are the components of $X$.
\end{lem}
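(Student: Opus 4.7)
For part (a), the plan is to use the standard characterization $\om f(x) = \bigcap_{n \geq 0} \overline{\{f^i(x) : i \geq n\}}$. Continuity of $f$ immediately gives $f(\om f(x)) \subseteq \om f(x)$, since if $y = \lim_k f^{n_k}(x)$ with $n_k \to \infty$ then $f(y) = \lim_k f^{n_k+1}(x) \in \om f(x)$. For the reverse inclusion I would take $y \in \om f(x)$ with $y = \lim_k f^{n_k}(x)$, extract a convergent subsequence of $\{f^{n_k-1}(x)\}$ with limit $z \in \om f(x)$ by compactness, and observe that $f(z) = y$ by continuity. The same argument applied to the homeomorphism $f^{-1}$ shows that $\al f(x) = \om(f^{-1})(x)$ is $f^{-1}$-invariant, hence $f$-invariant.

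For the chain relations under the assumption $x \in |\CC f|$, I would exploit the identity $\CC f = f \cup f \circ \CC f$ from (2.02), which evaluated at $x$ reads $\CC f(x) = \{f(x)\} \cup f(\CC f(x))$. The inclusion $f(\CC f(x)) \subseteq \CC f(x)$ is automatic, and the equality $\CC f(x) = f(\CC f(x))$ reduces to showing $f(x) \in f(\CC f(x))$. Since $f$ is a bijection this is equivalent to $x \in \CC f(x)$, which is precisely the hypothesis $x \in |\CC f|$. The analogous argument with $f$ replaced by $f^{-1}$, combined with the identification $|\CC f^{-1}| = |(\CC f)^{-1}| = |\CC f|$, yields $\CC f^{-1}(x) = f^{-1}(\CC f^{-1}(x))$, which is the desired $f$-invariance.

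For part (b), the first step is to compute $\CC 1_X$. Because $1_X \circ \bar V_\ep = \bar V_\ep \circ 1_X = \bar V_\ep$ and $\bar V_\ep \subseteq \bar V_\ep \circ \bar V_\ep$, one has $\OO(\bar V_\ep \circ 1_X \circ \bar V_\ep) = \OO(\bar V_\ep) = \bigcup_n \bar V_\ep^n$, so $(x,y) \in \CC 1_X$ exactly when for every $\ep > 0$ there is a finite chain $x = x_0, x_1, \dots, x_n = y$ with $d(x_i,x_{i+1}) \leq \ep$. Since $1_X^{-1} = 1_X$ this relation is symmetric, so the chain components of $1_X$ are its equivalence classes. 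The main (and mild) obstacle is then identifying these with the connected components of $X$, which I would argue directly: a clopen set $U$ separating $x$ from $y$ has $\ep = d(U, X \setminus U) > 0$ and blocks any $\ep$-chain from $x$ to $y$, so the chain component of $x$ is contained in every clopen set containing $x$; conversely, for each fixed $\ep > 0$ the set of points reachable from $x$ by an $\ep$-chain is clopen (both it and its complement are stable under moving by distance at most $\ep$), and therefore contains the connected component of $x$, so intersecting over all $\ep$ confines the connected component of $x$ inside the chain component of $x$.
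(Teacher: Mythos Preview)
Your argument is correct. Part (a) is essentially identical to the paper's proof: the same subsequence argument for $\om f(x)$, and the same use of identity (2.02) for $\CC f(x)$, with only a cosmetic difference (you read off $\CC f(x)=\{f(x)\}\cup f(\CC f(x))$ directly, while the paper first composes with $f^{-1}$ to write $f^{-1}(\CC f(x))=\{x\}\cup \CC f(x)$; these are the same identity).

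For part (b) you take a slightly more elementary route. The paper invokes Proposition~\ref{prop2.01}(d) (zero-dimensionality of the space of chain components) to get that each component lies in a single chain component, and then observes that clopen sets are inward for $1_X$ to get the reverse inclusion. You instead compute $\CC 1_X$ directly as ``$\ep$-chain reachability for every $\ep$'' and argue both inclusions by hand via clopen sets. Your approach is self-contained and avoids the attractor machinery; the paper's approach is shorter given the earlier proposition. Both arguments implicitly use that in a compact metric space the quasi-component (intersection of clopen neighborhoods) equals the component, which is standard.
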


\begin{proof}
(a) For a bijection $f$ on $X$, a subset $A$ is $f$-invariant iff it is $f^{-1}$-invariant
iff it is forward invariant for both $f$ and $f^{-1}$.

When $f$ is a continuous map on $X$, then
 $y \in \om f(x)$ when there is a subsequence $\{ f^{n_i}(x) \}$ of the
 orbit sequence which converges to $y$.  Then $\{ f^{n_i+1}(x) \}$ converges to $f(y)$, and if a subsequence of
 $\{ f^{n_i - 1}(x) \}$ converges to $z$, then $f(z) = y$.  That is, $\om f(x)$ is a nonempty, closed, $f$-invariant
 subset of $X$ when $f$ is a continuous map on $X$. So when $f$ is a homeomorphism, the same is true for
 $\al f(x)$.

 For $f \in H(X)$ we obtain from the identity (\ref{2.02}) that $1_X \cup \CC f = f^{-1} \circ \CC f$. If $x \in \CC f(x)$,
then $\CC f(x) \ = \ \{ x \} \cup \CC f(x) \ = \ f^{-1}(\CC f(x))$.  That is, $\CC f(x)$ is $f^{-1}$
invariant and so is $f$ invariant. Since $\CC (f^{-1}) = (\CC f)^{-1}$ the same is true for $\CC f^{-1}(x)$.

(b) The space of chain components being zero-dimensional by Proposition \ref{prop2.01}(d), every connected set contained
in $|\CC R|$ is entirely contained in a single chain component. In particular, when $R$ is $1_X$, every component is
a subset of a chain component.  On the other hand, when $R = 1_X$, every clopen subset is inward for $R$ and so contains any chain
component that it meets. It follows that the components are the chain components.
\end{proof}

\begin{prop}\label{prop2.05} Let $f \in H(X)$ and $\rho :G \to H(X)$ be an action of a group $G$ on $X$.
\begin{itemize}
\item[(a)] The following are equivalent:
\begin{itemize}
\item[(i)] $X$ is $f$-indecomposable;
\item[(ii)] $X$ is $f_{\pm}$-indecomposable;
\item[(iii)] $f_{\pm}$ is chain transitive.
\end{itemize}

\item[(b)] The relation $h_G$ is chain transitive iff $h_G$ is indecomposable.

\item[(c)] If $f$ is chain recurrent, then $\CC f = \CC (f_{\pm})$.

\item[(d)] The following are equivalent:
\begin{itemize}
\item[(i)] $f$ is chain transitive;
\item[(ii)] $f$ is chain recurrent and $f_{\pm}$ is chain transitive;
\item[(iii)] $f$ is chain recurrent and indecomposable.
\end{itemize}

\item[(e)] If $X$ is connected, then $f$ is chain transitive iff it is chain recurrent. In particular, $1_X$ is chain transitive.
\end{itemize}
\end{prop}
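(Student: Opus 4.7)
My plan is to handle each part in order, building heavily on Propositions \ref{prop2.01} and \ref{prop2.02}.

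For part (a), I would simply invoke Proposition \ref{prop2.02} with $R = f$: its three equivalent conditions are exactly (i), (ii), (iii) of the present statement, so nothing is needed beyond the bare substitution. For part (b), the key observation is that since $G$ is a group, $h_G$ already contains $1_X$ (the image of the unit) and is closed under inversion (inverses in $G$ give inverses of homeomorphisms), hence $(h_G)_{\pm} = h_G$. Proposition \ref{prop2.02} applied to $R = h_G$ then immediately yields the equivalence between $h_G$-indecomposability and chain transitivity of $(h_G)_{\pm} = h_G$.

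Part (c) is the main substantive step. The inclusion $\CC f \subset \CC f_{\pm}$ is immediate from the monotonicity of $\CC$ applied to $f \subset f_{\pm}$. For the reverse, I would argue by contrapositive. Suppose $(x,y) \notin \CC f$. Since $f$ is chain recurrent, $|\CC f| = X$, so both $x$ and $y$ lie in $|\CC f|$, and Proposition \ref{prop2.01}(c) produces an attractor $A$ of $f$ with $x \in A$ and $y \notin A$. By Proposition \ref{prop2.01}(a), $A = A \cap |\CC f|$ is clopen in $|\CC f| = X$, hence clopen in $X$. Because $A = \om f[U]$ is the maximum $f$-invariant subset of the inward set $U$ it arises from, $f(A) = A$; as $f$ is a bijection, this also gives $f^{-1}(A) = A$, so $A$ is $f_{\pm}$-invariant. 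A clopen forward-invariant set is inward, so $A$ is inward for $f_{\pm}$, hence forward $\CC f_{\pm}$-invariant. Since $x \in A$ while $y \notin A$, I conclude $(x,y) \notin \CC f_{\pm}$, which is what was wanted.

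Parts (d) and (e) are then quick consequences. For (d), the implication (i) $\Rightarrow$ (ii) is trivial since $\CC f = X \times X$ forces $|\CC f| = X$ and $\CC f_{\pm} \supset \CC f = X \times X$; (ii) $\Leftrightarrow$ (iii) is just part (a); and (iii) $\Rightarrow$ (i) combines part (a) with part (c) to yield $\CC f = \CC f_{\pm} = X \times X$. For (e), connectedness of $X$ means the only clopen subsets are $\emptyset$ and $X$, so $X$ is automatically $f$-indecomposable, and then (iii) $\Leftrightarrow$ (i) of part (d) gives the equivalence of chain transitivity with chain recurrence. The identity $1_X$ is always chain recurrent because $1_X \subset \CC 1_X$ forces $|\CC 1_X| = X$. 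The only step demanding genuine insight is part (c), where chain recurrence is used precisely to promote an attractor of $f$ into a clopen $f_{\pm}$-invariant set via the clopenness statement of Proposition \ref{prop2.01}(a).
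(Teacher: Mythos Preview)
Your proof is correct. Parts (a), (b), and (d) match the paper's argument essentially verbatim. Parts (c) and (e) take different routes that are worth noting.

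For (c), the paper gives a shorter direct argument: since $f$ is chain recurrent, $x \in \CC f(x)$ for every $x$, and by Lemma~\ref{lem2.04}(a) the set $\CC f(x)$ is $f$-invariant, so $f^{-1}(x) \in \CC f(x)$; this gives $f^{-1} \subset \CC f$, hence $f_{\pm} \subset \CC f$, and idempotence of $\CC$ (equation~(\ref{2.01})) finishes the job via $\CC f_{\pm} \subset \CC\CC f = \CC f$. Your contrapositive attractor argument via Proposition~\ref{prop2.01}(a),(c) is perfectly valid but leans on heavier machinery; the paper's approach isolates the single fact $f^{-1} \subset \CC f$ as the heart of the matter. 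For (e), the paper again argues via idempotence: connectedness makes $1_X$ chain transitive by Lemma~\ref{lem2.04}(b), and then $\CC 1_X \subset \CC\CC f = \CC f$. Your route through indecomposability and part (d) is arguably more natural and avoids the extra lemma. Both approaches to both parts are clean; the paper's use of $\CC\CC = \CC$ is the unifying trick you might have exploited more.
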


\begin{proof}
(a), (b) Since $h_G = (h_G)_{\pm}$, these results are a special cases of Proposition \ref{prop2.02}
applied with $R = f$ and $R = h_G$.

(c) If $f$ is chain recurrent, then $|\CC f| = X$ and $1_X \subset \CC f$. For any $x \in X$, we have $x \in \CC f(x)$, and this set
is $f$-invariant by Lemma \ref{lem2.04}. Hence, $f^{-1}(x) \in \CC f(x)$. Thus, $f^{-1} \subset \CC f$. Since
$f \subset f_{\pm} \subset \CC f$, it follows from (\ref{2.01}) that $\CC f \subset \CC (f_{\pm}) \subset \CC \CC f = \CC f$.

(d) If $f$ is chain transitive, then it is clearly chain recurrent, and $f \subset f_{\pm}$ implies that $f_{\pm}$ is chain
transitive.  The converse follows from  (c). This proves the equivalence of (i) and (ii). The equivalence of (ii) and (iii) follows from
(a).

(e) If $X$ is connected, then by Lemma \ref{2.04}(b)
$X$ consists of a single chain component for $1_X$, and so $1_X$ is chain transitive. So if $f$ is chain recurrent, then
$X \times X = \CC 1_X \subset \CC \CC f = \CC f$ by (\ref{2.01}) again.
\end{proof}

\begin{ex}\label{ex2.06} On $\Z$ let $t$ be the translation bijection given by $n \mapsto n+1$. Let
$\Z^*$ be the one-point compactification adjoining the point $\pm \infty$ to $\Z$, and let
$\Z^{**}$ be the two-point compactification adjoining the points $- \infty, + \infty$ to $\Z$.
Let $t^*$ and $t^{**}$ be the homeomorphisms extending $t$ to $\Z^*$ and $\Z^{**}$, respectively.
Both $t^*_{\pm}$ and $t^{**}_{\pm}$ are topologically transitive with $\Z$ the orbit of isolated points,
and so $\Z = \Trans(t^{*}_{\pm}) = \Trans(t^{**}_{\pm})$. Of course, both $t^{*}_{\pm}$ and $t^{**}_{\pm}$ are
chain transitive, but $t^*$ is also chain transitive while $t^{**}$ is not.

Let $X$ be the quotient space of $Z^{**} \times \{ 0,1 \}$ with the fixed points $(+\infty, 0)$ and$(+\infty,1)$
identified. Let $f$ be the homeomorphism on $X$ induced by $t^{**} \times 1_{\{ 0,1 \}}$. Clearly, $f_{\pm}$ is
chain transitive. But $\CC f \cup 1_X \cup \CC f^{-1}$ is contained in the image of
$(Z^{**} \times \{ 0 \} )^2 \cup (Z^{**} \times \{ 1 \} )^2 $ in $X^2$ and so is a proper subset of $\CC (f_{\pm}) = X^2$.
Furthermore, it is easy to check that in this case $\CC (f \cup 1_X) \ = \ (\CC f) \cup 1_X$. Hence, $R = f \cup 1_X$
is a closed relation such that $R$ is chain recurrent and $R_{\pm}$ is chain transitive, but $R$ is not chain transitive.
Thus, Proposition \ref{prop2.05} (c) and (d) do not extend to general relations.
\end{ex}

Finally, we recall the \emph{Uniqueness of Cantor}: any compact, perfect, zero-dimensional, metrizable space is homeomorphic
to the Cantor set in $[0,1]$. We will call any such space a Cantor set.  We will need the following well-known
result, providing a brief sketch of the proof.

\begin{prop}\label{prop2.06} For any space $X$ there exists a surjective continuous map
$\pi : C \to X$ with $C$ a Cantor set. \end{prop}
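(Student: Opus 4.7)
The plan is to realize $X$ as the continuous image of a tree of refining covers, and then enlarge the indexing space to obtain an honest Cantor set.

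For each $n \geq 1$, I would use compactness to choose a finite cover $\mathcal{U}_n = \{U_{n,1},\dots,U_{n,k_n}\}$ of $X$ by nonempty closed sets of diameter at most $1/n$. The product $\Pi = \prod_{n \geq 1}\{1,\dots,k_n\}$, equipped with the product topology coming from the discrete factors, is a compact, metrizable, zero-dimensional space. Let
\[
K \ = \ \Bigl\{\,\mathbf{i} = (i_n) \in \Pi : \bigcap_{n} U_{n,i_n} \neq \emptyset\,\Bigr\}.
\]
By the finite intersection property, if $\mathbf{i} \notin K$ then some initial intersection $U_{1,i_1} \cap \dots \cap U_{m,i_m}$ is already empty, and every $\mathbf{j}$ agreeing with $\mathbf{i}$ through coordinate $m$ also fails the condition; hence $K$ is closed in $\Pi$. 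The shrinking diameters then force $\bigcap_n U_{n,i_n}$ to be a single point $\pi_0(\mathbf{i})$ whenever $\mathbf{i} \in K$, so $\pi_0 : K \to X$ is well defined. Continuity is immediate, since two sequences sharing $m$ initial coordinates are sent into a common set of diameter at most $1/m$, and surjectivity is clear: given $x \in X$, picking any $i_n$ with $x \in U_{n,i_n}$ produces a preimage.

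The one subtle point, and the place where care is needed, is that $K$ need not itself be perfect; for instance, if $X$ is a singleton then $K$ is a singleton, and more generally isolated points in $K$ can appear from combinatorial accidents in the choice of covers. To remedy this I would pass to
\[
C \ = \ K \times \{0,1\}^{\N},
\]
which is still compact, metrizable, and zero-dimensional, but now perfect because the second factor is (every point acquires arbitrarily close neighbors along the Cantor coordinate). By the Uniqueness of Cantor, $C$ is a Cantor set, and the composition $\pi(\mathbf{i},\sigma) := \pi_0(\mathbf{i})$ delivers the required continuous surjection $\pi : C \to X$.
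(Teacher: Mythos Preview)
Your argument is correct. Both your proof and the paper's follow the same two-step scheme: first manufacture a compact zero-dimensional space mapping continuously onto $X$, then multiply by $\{0,1\}^{\N}$ to guarantee perfectness and invoke the Uniqueness of Cantor. The difference lies entirely in the first step. The paper fixes a countable basis $\BB$, embeds $X$ into $X \times \{0,1\}^{\BB}$ via $x \mapsto (x, (\chi_U(x))_{U \in \BB})$, takes the closure $Z$, and observes that the second projection is injective so that $Z$ inherits zero-dimensionality from $\{0,1\}^{\BB}$. You instead build an inverse-limit-style space $K$ from a sequence of finite closed covers with shrinking mesh. Your route is more hands-on and makes continuity and surjectivity of $\pi_0$ transparent from the diameter bound; the paper's route is terser but leans on the slightly less obvious fact that injectivity of the second projection survives passage to the closure. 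Either construction is standard, and the final product-with-Cantor step is identical.
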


\begin{proof}
With $\BB$ a countable basis for $X$, let $Z$ be the closure in $X \times \{ 0, 1 \}^{\BB}$ of
the set of pairs $\{ (x,z) : z_U = 1 \ \Leftrightarrow \ x \in U \}$. The projection to $X$ is clearly onto, and
because $\BB$ is a basis, the projection to $\{ 0 , 1 \}^{\BB}$ is easily seen to be injective. It follows that
$Z$ is compact and zero-dimensional.  If $C_0$ is a Cantor set, then $C = Z \times C_0$ is perfect as well
as zero-dimensional and so is a Cantor set. Let $\pi$ be the composition of projections $C \to Z \to X$.
\end{proof}

\section{Spaces which admit Chain Transitive Maps}\label{trans}

We begin with the relationship between omega limit sets and chain transitive subsets which was described in the introduction.

\begin{prop}\label{prop3.01} If $f$ is a homeomorphism on a space $X$ and  $x \in X$, then
$\om f(x)$ and $\al f(x)$ are chain transitive subsets, i.e. the restriction of $f$ to each of these
nonempty, closed, invariant sets is chain transitive. \end{prop}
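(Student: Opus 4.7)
The plan is to prove chain transitivity for $\omega f(x)$ directly by a tracking argument, and then deduce the $\alpha f(x)$ case by applying the same argument to $f^{-1}$. Lemma \ref{lem2.04}(a) already gives that $\omega f(x)$ and $\alpha f(x)$ are nonempty, closed, and $f$-invariant, so the only content is the chain transitivity of the restricted systems.

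Fix $y,z \in \omega f(x)$ and $\ep > 0$. By uniform continuity of $f$, pick $\eta$ with $0 < \eta \le \ep/2$ such that $d(a,b) \le \eta$ implies $d(f(a),f(b)) \le \ep/2$. The key observation is that the positive orbit $\{f^n(x)\}$ converges to the closed set $\omega f(x)$ in the Hausdorff sense: otherwise compactness would yield a subsequence bounded away from $\omega f(x)$ whose limit would lie in $\omega f(x)$, a contradiction. Thus there is $N$ such that $d(f^n(x),\omega f(x)) < \eta/2$ for all $n \ge N$.

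Now choose integers $N \le m < n$ with $d(f^m(x),y) \le \eta/2$ and $d(f^n(x),z) \le \eta/2$, which is possible because $y,z$ are $\omega$-limit points. For each $k$ with $m < k < n$ select some $y_k \in \omega f(x)$ with $d(y_k,f^k(x)) < \eta/2$, and set $y_m = y$, $y_n = z$. Then for each $m \le k < n$ we have $d(y_k,f^k(x)) \le \eta$, so
\begin{equation*}
d(f(y_k),y_{k+1}) \ \le \ d(f(y_k),f^{k+1}(x)) + d(f^{k+1}(x),y_{k+1}) \ \le \ \tfrac{\ep}{2} + \tfrac{\eta}{2} \ \le \ \ep.
\end{equation*}
This exhibits an $\ep$-chain from $y$ to $z$ inside $\omega f(x)$, proving that $(\omega f(x), f|_{\omega f(x)})$ is chain transitive.

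For the alpha case, note $\alpha f(x) = \omega f^{-1}(x)$ and apply the above to the homeomorphism $f^{-1}$; this gives chain transitivity of $f^{-1}|_{\alpha f(x)}$. Since $\CC(R^{-1}) = (\CC R)^{-1}$, chain transitivity with respect to $f$ and with respect to $f^{-1}$ agree on any invariant subset, completing the proof. The only nontrivial point is the first paragraph's observation: one must ensure the chain lies \emph{inside} $\omega f(x)$ rather than following the actual orbit of $x$, which typically avoids $\omega f(x)$; this is handled by approximating each orbit point $f^k(x)$ by a nearby $\omega$-limit point and using uniform continuity to absorb the small perturbation into $\ep$.
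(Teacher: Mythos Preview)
Your proof is correct and follows essentially the same approach as the paper: choose a uniform-continuity modulus, wait until the orbit of $x$ stays close to $\om f(x)$, pick orbit indices near $y$ and $z$, and replace each intermediate orbit point by a nearby point of $\om f(x)$ to obtain an $\ep$-chain inside $\om f(x)$; the $\al f(x)$ case then follows by applying the same to $f^{-1}$. The only differences are cosmetic (your $\eta$ plays the role of the paper's $\del$, and you spell out the compactness argument for eventual proximity of the orbit to $\om f(x)$).
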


\begin{proof}
Let $y, y' \in \om f(x)$ and let $\ep > 0$. Choose $\del > 0$ an $\ep/2$-modulus of uniform continuity for
$f$ with $\del <  \ep/2$. There exists $N \in \N$ such that $n \geq N$ implies $f^n(x) \in \bar V_{\del }(\om f(x))$. There exist
$n \geq N$ such that $d(f^n(x),y) < \del $ and $k \in \N$ such that $d(f^{n+k}(x),y') < \del $. For $i = 0,\dots,k$
choose $y_i \in \om f(x)$ such that $d(f^{n+i}(x),y_i) \leq \del $ with $y_0 = y, y_k = y'$. Hence,
$d(f^{n+i+1}(x),f(y_i)) \leq \ep/2$ and so $d(y_{i+1},f(y_i)) \leq \ep$. That is, $\{ y_i \}$ is an $\ep$-chain from
$y$ to $y'$. It follows that $\om f(x)$ is a chain transitive subset. Applying the result to $f^{-1}$, we see that
$\al f(x) = \om (f^{-1})(x)$ is a chain transitive subset as well.
\end{proof}

We prove, conversely, in Theorem \ref{theo3.11} (a) below that a chain transitive homeomorphism is the restriction
to an omega limit set of a homeomorphism in a larger system.

In the constructions which follow, we will repeatedly use the process of \emph{attachment}. Assume that
$A$ is a nonempty, closed, nowhere dense subset of a space $X$ and that $h : A \to B$ is a continuous surjection.
We may assume that $X$ and
$B$ are disjoint. Otherwise, replace $X$ by $X \times \{ 0 \}$ and $B$ by $B \times \{ 1 \}$.
Define $X/h$, the space with \emph{$X$ attached to $B$ via $h$} as follows:  Let $\wt h$ denote the continuous
retraction $h \cup 1_B : A \cup B \to B$. Let
$E_h = 1_X \cup (\wt h^{-1} \circ \wt h) = 1_X \cup (\wt h \times \wt h)^{-1}(1_{B})$, a closed
equivalence relation on $X \cup B$. Let $X/h$ be the quotient space with projection $q_h : X \cup B \to X/h$. Since $q_h$ is
injective on $B$, we may regard it as an identification and so regard $B$ as a subset of $X/h$. Furthermore, $q_h$
restricts to a surjection $X \to X/h$ which maps $A$ onto $B$ via $h$ and which is a homeomorphism between the dense open sets
$X \setminus A \subset X$ and $(X/h) \setminus B \subset X/h$.
%EJAjune04
When $B$ is a singleton, we write $q_A : X \to X/A$ for the quotient map and describe the result as \emph{smashing $A$ to a point}.

We will use some results of E. R. Lorch from \cite{L1} and \cite{L2} (see also \cite{T}), which we will briefly review.

For a locally compact space $W$, a \emph{compactification} of $W$ is a compact space $Y$ together with a
dense embedding of $W$ into $Y$,
i.e. a homeomorphism of $W$ onto a dense subset of $Y$. Because $W$ is locally compact, its image is an open, dense subset
of $Y$, so $X = Y \setminus W$ is a nowhere dense, closed subset of $Y$. Reversing the point of view, we call $Y$ an
\emph{extension of $X$} if $Y$ is a compact space and $X$ is a closed, nowhere dense subset of $Y$.

By a \emph{pair of spaces} $(Y, X)$ we will mean a space and a closed subset, respectively. Recall our default assumption that a space is a
nonempty, compact, metrizable space. We will call $(Y, X)$ an
\emph{extension pair} when $Y$ is an extension of $X$, i.e. when $X$ is nowhere dense
in $Y$.

A continuous map $f : Y_1 \to Y_2$ is a map of pairs $f : (Y_1, X_1) \to (Y_2, X_2)$ when
$f(X_1) \subset X_2$. If $X_1 = X_2 = X$, we will say that $f : (Y_1, X) \to (Y_2, X)$ is a map \emph{rel $X$} if it restricts
to the identity on $X$ and if, in addition, $f(Y_1 \setminus X) \subset Y_2 \setminus X$. These conditions imply that
$1_X = (f \times f)^{-1}(1_X) $.  So by intersecting over $\del > 0$ and using compactness, we see that for every $\ep > 0$
there exists $\del > 0$ such that,
\begin{equation}\label{3.01}
\begin{gathered}
\text{if } (u,v) \in Y_1 \times Y_1 \text{ and} \\
d_2(f(u),f(v)), d_2(f(u),X), d_2(f(v),X) \leq \ \del, \text{ then} \\
(u,v) \in \bar V^{d_1}_{\ep}, \text{ and so } f^{-1}(\bar V^{d_2}_{\del}(x)) \ \subset \ \bar V^{d_1}_{\ep}(x) \text{ for all } x \in X,
 \end{gathered}
 \end{equation}
where $d_1, d_2$ are metrics on $Y_1$ and $Y_2$. Note  that in considering different extensions $Y_1, Y_2$ of the same
space $X$ we do not assume that the metrics $d_1$ and $d_2$ agree on $X$, although they are, of course, uniformly
equivalent on $X$.

\begin{df}\label{df3.02}  We call
 $X^i$  an \emph{isolated point extension} of $X$, or just a \emph{point extension} of $X$, if
$X^i$ is an extension of $X$ with each point of $X^i \setminus X$ isolated.  We then call
$(X^i, X)$ a \emph{point extension pair}.
\end{df}

A pair $(Y,A)$ is a point extension pair iff $Y$ is infinite, $\Iso(Y)$ is dense and $A = Y \setminus \Iso(Y)$.
Since $Y$ is separable, $\Iso(Y)$ is denumerable, and so Lorch uses the term \emph{denumerable extension} instead of
point extension. Thus, $Y$ is a compactification of the denumerable discrete set $\Iso(Y)$.

For a point extension $(X^i,X)$, we define a \emph{canonical retraction} $r : X^i \to X$ so that
for all $x \in X^i$
\begin{equation}\label{3.02}
d(x, r(x)) \ = \ d(x, X).
\end{equation}
The choice depends on the metric. Even for a fixed metric there may be more than one choice of point $r(x)$ which
satisfies (\ref{3.02}). For each $x$ we fix a choice to define $r$. Clearly,
\begin{equation}\label{3.03}
d(r(x_1),r(x_2)) \ \leq \ d(x_1,X) + d(x_1,x_2) + d(x_2,X).
\end{equation}
For every $\ep > 0$ there are only finitely many points $x \in X^i$ with
$d(x,X) \geq \ep$, so continuity of $r$ at the points of $X$ follows.  Continuity at the isolated
points is trivial.

Notice that if $N_0$ is a cofinite subset of $X^i \setminus X$, then the closure of $N_0$ contains $X$, so
$r(N_0)$ is dense in $X$.

We now recall the elegant proof of \emph{Lorch's Uniqueness Theorem}, \cite{L1} Proposition 10.

\begin{theo}\label{theo3.03} Every space $X$ has an essentially unique isolated point extension. That is, if $(Y, X)$ and $(Y', X)$
are point extension pairs, then there is a homeomorphism $f : (Y, X) \to (Y', X) $ rel  $ X$. \end{theo}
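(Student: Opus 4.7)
The plan is a classical back-and-forth, producing a bijection $\phi \colon Y \setminus X \to Y' \setminus X$ that extends by the identity on $X$ to the desired homeomorphism rel $X$. First I would fix metrics $d$ on $Y$ and $d'$ on $Y'$ (both induce the topology of $X$ and so are uniformly equivalent on the compact set $X$) and canonical retractions $r \colon Y \to X$, $r' \colon Y' \to X$ as in (\ref{3.02}). The key finiteness fact is that for every $\ep > 0$ only finitely many points of $Y \setminus X$ have $d$-distance $\geq \ep$ from $X$, and similarly in $Y'$, since $Y \setminus X$ is a discrete subset of the compact $Y$. Thus one may enumerate $Y \setminus X = \{a_n\}_{n \geq 1}$ and $Y' \setminus X = \{b_n\}_{n \geq 1}$ so that $d(a_n,X)$ and $d'(b_n,X)$ are non-increasing sequences tending to $0$. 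Fix any $\ep_n \searrow 0$.

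Next I would construct $\phi$ in stages. At stage $2n-1$, let $i(n)$ be the least index with $a_{i(n)}$ still unmatched; because the dense discrete set $Y' \setminus X$ accumulates at $r(a_{i(n)}) \in X$, infinitely many unused $b$'s satisfy $d'(b, r(a_{i(n)})) < \ep_n$, and I set $\phi(a_{i(n)})$ to be any one of them. At stage $2n$, let $j(n)$ be the least index with $b_{j(n)}$ not yet in the range, and set $\phi(a) := b_{j(n)}$ for some unused $a$ with $d(a, r'(b_{j(n)})) < \ep_n$. A simple strong induction shows that after stage $2n-1$ all of $a_1, \ldots, a_n$ are matched, hence $i(n) \geq n$, and similarly $j(n) \geq n$, so the resulting $\phi$ is a bijection between $Y \setminus X$ and $Y' \setminus X$.

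To check continuity of $f := 1_X \cup \phi$ at a point $x \in X$, I would take a sequence $u_n = a_{\kappa_n} \to x$ in $Y \setminus X$. Because only $N$ of the $a$'s have been matched after $N$ stages and the $\kappa_n$ are distinct, the matching stage $t_n$ of $u_n$ satisfies $t_n \to \infty$. If $t_n = 2m_n - 1$ is odd, the construction gives $d'(\phi(u_n), r(u_n)) < \ep_{m_n} \to 0$, and combined with $r(u_n) \to x$ and the uniform equivalence of $d, d'$ on $X$ this forces $\phi(u_n) \to x$ in $Y'$. If $t_n = 2m_n$ is even, then $\phi(u_n) = b_{j(m_n)}$ with $j(m_n) \geq m_n$, so $d'(\phi(u_n), X) \leq d'(b_{m_n},X) \to 0$ by the enumeration, while the matching rule $d(u_n, r'(\phi(u_n))) < \ep_{m_n}$ together with $u_n \to x$ forces $r'(\phi(u_n)) \to x$; in either case $\phi(u_n) \to x$. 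Applying the same argument to $\phi^{-1}$ shows $f^{-1}$ is continuous, so $f$ is a homeomorphism rel $X$.

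The main subtlety I expect is precisely the book-keeping used in the last step: a naive back-and-forth could pair a point close to $X$ with a partner far from $X$, breaking continuity at the limit. The twin devices that prevent this are the enumeration of each discrete set by decreasing distance to $X$ and the rule of forcing in the smallest unmatched index at each parity of stage; together they guarantee $j(m_n) \geq m_n \to \infty$, so that $\phi(u_n)$ really must be close to $X$ whenever $u_n$ is.
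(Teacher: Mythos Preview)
Your argument is correct and follows essentially the same back-and-forth scheme as the paper's proof of Lorch's Uniqueness Theorem; the only substantive difference is book-keeping: the paper uses the adaptive error bound $d''(r(a_{n+1}),r'(b_{n+1})) < \max\{d(a_{n+1},X),\,d'(b_{n+1},X)\}$ at each step, whereas you fix a sequence $\ep_n \searrow 0$ in advance and compensate by enumerating the isolated points in non-increasing distance to $X$ so that the ``least unmatched index'' rule forces $j(m_n)\ge m_n$. One small omission: the theorem also asserts existence of a point extension, which the paper handles in one line by taking $X^i = X\times\{0\}\cup\{(x_n,n^{-1}):n\in\N\}$ for a sequence $\{x_n\}$ with every tail dense in $X$; you should include this.
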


\begin{proof}
Let $\{ x_n : n \in \N \}$ be a sequence of not necessarily distinct points in $X$
such that $\{ x_n : n \geq N \}$ is
dense in $X$ for all $N \in \N$.  Let $X^i = X \times \{ 0 \} \cup \{ (x_n, n^{-1}) : n \in \N \}$, and
identify $X$ with $X \times \{ 0 \}$. Clearly, $(X^i, X)$ is a point extension pair, so $X$ has at least one
point extension.

Let $(Y, X)$ and $(Y', X)$ be point extension pairs.

Fix metrics $d$ and $d'$ on $Y$ and $Y'$, respectively. Define a metric $d''$ on $X$ as the pointwise maximum of $d$ and $d'$.  The three metrics
$d$, $d'$ and $d''$ are uniformly equivalent on $X$. Let $r : Y \to X$ and $r' : Y' \to X$ be canonical retractions.
Let $N = \Iso(Y) = Y \setminus X$ and $N' = \Iso(Y') = Y' \setminus X$.
Use a counting of $N$ and $N'$ to impose orderings which are order isomorphic to $\N$ and so are well-orderings.

Let $a_1$ be the first element of $N$, and choose $b_1$ to be the first element of $N'$ which satisfies
\begin{equation}\label{3.04}
d''(r(a_1), r'(b_1)) \ < \ d(a_1, X).
\end{equation}

Let $b_2$ be the first element of $N' \setminus \{ b_1 \}$, and choose $a_2$ to be the first element of $N \setminus \{ a_1 \}$ such that
\begin{equation}\label{3.05}
d''(r(a_2), r'(b_2)) \ < \ d'(b_2, X).
\end{equation}

Proceed inductively.  If $n$ is even, let $a_{n+1}$ be the first element of $N \setminus \{ a_1, \dots, a_n \}$, and
if $n$ is odd, let $b_{n+1}$ be the first element of $N' \setminus \{ b_1, \dots, b_n \}$. We can then choose $b_{n+1}$ or
$a_{n+1}$ so that
\begin{equation}\label{3.06}
d''(r(a_{n+1}), r'(b_{n+1})) \ < \ \max\{d(a_{n+1}, X), d'(b_{n+1}, X)\}.
\end{equation}

By construction, $\{ a_n \}$ and $\{ b_n \}$ are re-numberings of the sets $N$ and $N'$.
Define the mapping $f : Y \to Y'$ as an extension of the identity on $X$ by putting
$f(a_n) = b_n$  for all $n$.  Let $m_n = \max \{d(a_{n}, X), d'(b_{n}, X)\}$. Observe that $m_n \longrightarrow 0$ as
$n \longrightarrow \infty$.

Continuity of $f$ is clear at the isolated points. Suppose $x \in X$ and
$a_{n_i} \longrightarrow x$ so that $n_i \longrightarrow \infty$.
\begin{equation}\label{3.07}
\begin{split}
d(r(a_{n_i}),x) \ &\leq \ d(a_{n_i}, x) \ + \ d(r(a_{n_i}), a_{n_i}) \\
&\leq \ d(a_{n_i}, x) \ + \ m_{n_i} \ \longrightarrow \ 0,
\end{split}
\end{equation}
and so $d'(r(a_{n_i}),x) \longrightarrow 0$.
Hence,
\begin{equation}\label{3.08}
\begin{split}
d'(b_{n_i},x) \ &\leq \ d'(r(b_{n_i}), b_{n_i}) \ + \ d'(r(b_{n_i}),r(a_{n_i})) \ + \ d'(r(a_{n_i}),x) \\
&\leq \ 2 m_{n_i} \ +  \ d'(r(a_{n_i}),x) \ \longrightarrow \ 0.
\end{split}
\end{equation}

Continuity of $f$ follows. The result for $f^{-1}$ is similar and also follows from compactness.
\end{proof}

\begin{cor}\label{cor3.04} Let $(X^i, X)$ and $(Y^i, Y)$ be point extension pairs and $h : X \to Y$ a surjective
continuous map.  There exist a continuous map $H : (X^i, X) \to (Y^i, Y)$ which restricts to $h$ on $X$ and
to a homeomorphism of $\Iso(X^i) = X^i \setminus X$ onto $\Iso(Y^i) = Y^i \setminus Y$. In particular, if
$h$ is a homeomorphism, so is $H$. \end{cor}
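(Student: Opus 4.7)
The plan is to reduce the problem to Lorch's Uniqueness Theorem (Theorem \ref{theo3.03}) by manufacturing, out of $X^i$ and $h$, a second point extension of $Y$ and then pulling back the uniqueness isomorphism.

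First I would form the attachment $Z = X^i / h$, using $X \subset X^i$ as the closed nowhere dense subset and $h : X \to Y$ as the prescribed continuous surjection. Because $X^i$ and $Y$ are compact metrizable and the relevant equivalence relation $E_h$ is closed, $Z$ is again a space; the quotient map $q : X^i \sqcup Y \to Z$ embeds $Y$ as a closed subset and restricts to a homeomorphism on $X^i \setminus X$ onto $Z \setminus Y$. In particular, each point of $Z \setminus Y$ is isolated in $Z$, because $X^i \setminus X = \Iso(X^i)$ is open in $X^i$ and is saturated under $E_h$.

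Next I would verify that $(Z, Y)$ is a point extension pair, i.e.\ that $Y$ is nowhere dense in $Z$. Given $y \in Y$, surjectivity of $h$ supplies $x \in X$ with $h(x) = y$, so $q(x) = y$ in $Z$. Because $(X^i, X)$ is a point extension pair, $\Iso(X^i)$ is dense in $X^i$, and so there is a sequence $\{a_n\} \subset \Iso(X^i)$ with $a_n \to x$ in $X^i$. Their images $q(a_n) \in Z \setminus Y$ converge to $q(x) = y$ in $Z$. Hence $Z \setminus Y$ is dense in $Z$, and combined with the isolation observation above we conclude $\Iso(Z) = Z \setminus Y$, so $(Z, Y)$ is indeed a point extension pair.

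Now I would apply Theorem \ref{theo3.03} to the two point extensions $(Z,Y)$ and $(Y^i, Y)$ of the common space $Y$ to produce a homeomorphism $\phi : (Z, Y) \to (Y^i, Y)$ rel $Y$. Defining $H := \phi \circ q|_{X^i} : X^i \to Y^i$, one checks directly from the two defining properties of the attachment that $H$ extends $h$ (for $x \in X$, $q(x)$ is identified with $h(x) \in Y$, and $\phi$ fixes $Y$), while on $\Iso(X^i) = X^i \setminus X$ the map $H$ is the composition of the homeomorphism $q : X^i \setminus X \to Z \setminus Y$ with the homeomorphism $\phi|_{Z\setminus Y} : Z\setminus Y \to \Iso(Y^i)$. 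The particular case when $h$ is a homeomorphism then follows because $H$ is then a continuous bijection between compact Hausdorff spaces.

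The main conceptual obstacle is verifying that $Y$ is nowhere dense in $Z$, which is precisely where surjectivity of $h$ is essential; once that is in place the rest is bookkeeping plus the quoted uniqueness theorem.
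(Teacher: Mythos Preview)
Your proposal is correct and follows essentially the same route as the paper: form the attachment $X^i/h$, observe that $(X^i/h, Y)$ is a point extension pair, apply Lorch's uniqueness theorem to obtain a homeomorphism to $(Y^i,Y)$ rel $Y$, and compose with the quotient map. The paper's proof is terser, simply asserting that $(X^i/h, Y)$ is a point extension pair on the strength of the general attachment discussion (which already assumes $h$ surjective), whereas you spell out explicitly how surjectivity of $h$ yields density of $Z\setminus Y$.
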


\begin{proof}
We  attach $Y$ to $X^i$ using $h$, letting $q_h : X^i \to X^i/h$
be the quotient map. We regard $Y$ as a subset of $X^i/h$
so that $(X^i/h,Y)$ is a point extension pair and $q_h : X^i \to X^i/h$ is an extension of $h$ which is a homeomorphism from
$X^i \setminus X$ onto $X^i/h \setminus Y$.  By Theorem \ref{theo3.03} there is a homeomorphism $f :(X^i/h, Y) \to (Y^i, Y)$
rel $Y$. Let $H = f\circ q_h$.
\end{proof}

\begin{lem}\label{lem3.05} Let $(X_1^i, X_1)$ and $(X_2^i, X_2)$ be point
extension pairs, and let $H : (X_1^i, X_1) \to (X_2^i, X_2)$
be a continuous map of pairs. Let $(Y_1, X_1)$ and $(Y_2, X_2)$ be extension pairs with
$\pi_1 : (Y_1, X_1) \to (X^i_1, X_1)$ and $\pi_2 : (Y_2, X_2) \to (X^i_2, X_2)$  pair maps rel $X_1$ and rel $X_2$,
respectively. Assume that $\wt H : Y_1 \to Y_2$ is a function such that $\pi_2 \circ \wt H = H \circ \pi_1$,
i.e. the following diagram commutes:
\begin{equation*}
\begin{tikzcd}
Y_1 \arrow{r}{\wt H} \arrow{d}[swap]{\pi_1} & Y_2 \arrow{d}{\pi_2}\\
X_1^i \arrow{r}[swap]{H} & X_2^i
\end{tikzcd}
\end{equation*}
If for each $x \in \Iso(X^i_1)$ the restriction $\wt H : \pi_1^{-1}(x) \to \pi_2^{-1}(H(x))$ is continuous, then
$\wt H$ is continuous on $Y_1$. \end{lem}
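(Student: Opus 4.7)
The plan is to verify continuity of $\wt H$ at each point $y \in Y_1$, splitting into two cases depending on whether $y \in Y_1 \setminus X_1$ or $y \in X_1$. In the first case, $\pi_1(y) \in \Iso(X_1^i)$, so $\pi_1^{-1}(\pi_1(y))$ is a clopen subset of $Y_1$ (open because $\{\pi_1(y)\}$ is open in $X_1^i$; closed because it is a fiber of a continuous map) and hence an open neighborhood of $y$. The hypothesis gives continuity of $\wt H$ on this fiber into $\pi_2^{-1}(H(\pi_1(y)))$, and since a continuous map into a subspace is continuous into the ambient space, this immediately yields continuity of $\wt H$ at $y$ as a map $Y_1 \to Y_2$.

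The substantial case is $y \in X_1$. First I would pin down $\wt H(y)$: since $\pi_1$ is rel $X_1$, $\pi_1(y) = y$, so $\pi_2(\wt H(y)) = H(y) \in X_2$ (using that $H$ is a map of pairs). The rel $X_2$ condition on $\pi_2$ forces $\pi_2^{-1}(X_2) = X_2$, so $\wt H(y) \in X_2$, and since $\pi_2$ restricts to the identity on $X_2$, we must have $\wt H(y) = H(y)$. For sequential continuity at $y$, let $y_n \to y$ in $Y_1$; after passing to a subsequence I may assume either all $y_n$ lie in $X_1$ or all lie in $Y_1 \setminus X_1$. The first subcase is immediate: $\wt H(y_n) = H(y_n) \to H(y) = \wt H(y)$ by continuity of $H$ on $X_1$.

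The second subcase is the crux. Here $\pi_1(y_n) \to \pi_1(y) = y$ in $X_1^i$, so continuity of $H$ gives $\pi_2(\wt H(y_n)) = H(\pi_1(y_n)) \to H(y) \in X_2$ in $X_2^i$. I would then apply the estimate \eqref{3.01} to the rel $X_2$ map $\pi_2 : (Y_2,X_2) \to (X_2^i, X_2)$ with $u = \wt H(y_n)$ and $v = H(y) \in X_2 \subset Y_2$: since $\pi_2(v) = H(y) \in X_2$ and $\pi_2(u) \to H(y)$ in $X_2^i$, the three distances $d(\pi_2(u),\pi_2(v))$, $d(\pi_2(u),X_2)$, $d(\pi_2(v),X_2)$ become simultaneously small, which forces $\wt H(y_n) \to H(y) = \wt H(y)$ in $Y_2$.

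The main obstacle is this last subcase. The fiber hypothesis controls $\wt H$ only on fibers over isolated points, and continuity of $H|_{X_1}$ controls it only on sequences staying in $X_1$; the scenario where $y_n$ lies in the discrete part while converging to a non-isolated limit in $X_1$ is precisely where neither ingredient applies directly, and closing this gap is exactly what the metric estimate \eqref{3.01}, built into the rel $X_2$ property of $\pi_2$, is designed for.
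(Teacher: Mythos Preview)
Your proof is correct and follows essentially the same approach as the paper: the clopen-fiber argument for points over $\Iso(X_1^i)$ is identical, and for points of $X_1$ both arguments reduce continuity to the rel $X_2$ estimate \eqref{3.01} applied to $\pi_2$. The paper gives a direct $\ep$--$\delta$ argument (choosing $\delta$ from \eqref{3.01}, then $\gamma$ from continuity of $H$, so that $\pi_1^{-1}(V_\gamma(x))$ works as the desired neighborhood), which handles your two subcases in one stroke; your sequential version with the split into $y_n \in X_1$ versus $y_n \notin X_1$ is slightly more verbose but equivalent.
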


\begin{proof}
If $x \in \Iso(X^i_1)$, then $\pi_1^{-1}(x)$ is a clopen set, and so
$\wt H$ is continuous at the points of $\pi_1^{-1}(\Iso(X^i))$ by hypothesis.

Let $x \in X_1$, so $y = H(x) \in X_2$. Given $\ep > 0$, there exists by (\ref{3.01}) $\del > 0$ so that
$\pi_2^{-1}(V^{d_2}_{\del }(y)) \subset V^{d_2'}_{\ep}(y)$, where $d_2$ and $d_2'$
are the metrics on $X^i_2$ and $Y_2$, respectively. By continuity of $H$ there exists $\ga > 0$ so
that $H(V^{d_1}_{\ga }(x)) \subset V^{d_2}_{\del }(y)$.  If $x_1 \in \pi_1^{-1}(V^{d_1}_{\ga }(x))$, then
$\wt H(x_1) \in \pi_2^{-1}(V^{d_2}_{\del }(y)) \subset V^{d_2'}_{\ep}(y)$. Hence, $\pi_1^{-1}(V^{d_1}_{\ga }(x))$
is a neighborhood of $x$ in $Y_1$ which is mapped by $\wt H$ into the neighborhood $V^{d_2'}_{\ep}(y)$ in $Y_2$, and
continuity at $x$ follows.
\end{proof}

%EJA   I realized that X^K looks too much like the space of maps from K to X and so I changed to X^{(K)}
\begin{df}\label{df3.06}  For a space $K$ and a pair of spaces $(Y,X)$, we call $Y$
a \emph{$K$-extension} of $X$ if there exist a point extension pair $(X^i, X)$ and a
map of pairs $\pi : (Y, X) \to (X^i, X)$ rel $X$ such that $\pi^{-1}(x)$ is
homeomorphic to $K$ for every $x \in \Iso(X^i)$.  We then call
$(Y, X)$ a \emph{$K$-extension pair}, and the space $Y$ is denoted by $X^{(K)}$. \end{df}

We extend Lorch's Theorem.

\begin{theo}\label{theo3.07} For any given space $K$, every space $X$ has an essentially
unique $K$-extension pair $(X^{(K)},X)$. Furthermore, if $(X^{(K)}, X)$ and $(Y^{(K)}, Y)$
are $K$-extension pairs and $h : X \to Y$ is a surjective
continuous map, then there exists a continuous map $H : (X^{(K)}, X) \to (Y^{(K)}, Y)$ which restricts to $h$ on $X$ and
to a homeomorphism of $ X^{(K)} \setminus X$ onto $Y^{(K)} \setminus Y$. In particular, if
$h$ is a homeomorphism, then so is $H$. \end{theo}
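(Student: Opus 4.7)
For existence, the plan is to build $X^{(K)}$ by inflating each isolated point of $X^i$ to a copy of $K$. Let $(X^i,X)$ be a point extension pair from Theorem~\ref{theo3.03}, and consider the product $X^i\times K$. Define the closed equivalence relation
\[
E \ = \ \Delta_{X^i\times K} \ \cup \ \bigcup_{x\in X} (\{x\}\times K)^2
\]
and let $X^{(K)} = (X^i\times K)/E$ with quotient map $q$. Since $E$ is closed, $X^{(K)}$ is a compact Hausdorff quotient of a compact metrizable space by a closed upper semicontinuous decomposition with compact fibers, hence metrizable. The first projection $X^i\times K\to X^i$ descends to a continuous map $\pi:X^{(K)}\to X^i$, and identifying $x\in X$ with $q(x,k)$ (any $k$) embeds $X$ as a closed, nowhere dense subset. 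For $a\in \Iso(X^i)$ the fiber $\pi^{-1}(a)=q(\{a\}\times K)$ is homeomorphic to $K$, so $(X^{(K)},X)$ is a $K$\hyp{}extension pair with the required $\pi$ rel $X$.

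For the map assertion (which contains uniqueness as the special case $X=Y$, $h=1_X$), let $h:X\to Y$ be a continuous surjection and let $(X^{(K)},X)$, $(Y^{(K)},Y)$ be $K$\hyp{}extension pairs with projections $\pi_1:X^{(K)}\to X^i$ and $\pi_2:Y^{(K)}\to Y^i$. By Corollary~\ref{cor3.04}, choose a continuous extension $H^i:(X^i,X)\to(Y^i,Y)$ of $h$ which restricts to a bijection $\Iso(X^i)\to\Iso(Y^i)$. For each $a\in\Iso(X^i)$, since $\pi_1^{-1}(a)\cong K\cong\pi_2^{-1}(H^i(a))$, fix a homeomorphism $g_a:\pi_1^{-1}(a)\to\pi_2^{-1}(H^i(a))$. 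Define
\[
H \ = \ h \text{ on } X, \qquad H|_{\pi_1^{-1}(a)} \ = \ g_a \text{ for each } a\in\Iso(X^i).
\]
Then $\pi_2\circ H = H^i\circ\pi_1$ by construction, and on each isolated fiber $H$ is the continuous map $g_a$. Lemma~\ref{lem3.05} applied with $\wt H=H$ now yields continuity of $H$ on all of $X^{(K)}$. By construction $H$ restricts to $h$ on $X$ and to a bijective homeomorphism $X^{(K)}\setminus X\to Y^{(K)}\setminus Y$ (a disjoint union of the $g_a$'s).

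When $h$ is a homeomorphism, $H^i$ is a homeomorphism by Corollary~\ref{cor3.04}, so $H$ is a continuous bijection between compact Hausdorff spaces, hence a homeomorphism. The uniqueness statement is the case $X=Y$, $h=1_X$: the resulting $H$ is a homeomorphism $(X^{(K)},X)\to(Y^{(K)},Y)$ rel $X$.

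The only nontrivial point is verifying that the continuity on isolated fibers propagates to continuity at points of $X$, which is exactly the content of Lemma~\ref{lem3.05}; the metrizability of the quotient is the other mild technicality and follows from the closedness of $E$ together with compactness. Everything else is bookkeeping: existence is the single attachment $X^i\times K\to (X^i\times K)/E$, and the map statement reduces, via Corollary~\ref{cor3.04} at the level of point extensions, to choosing an arbitrary homeomorphism of $K$ on each isolated fiber.
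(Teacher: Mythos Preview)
Your proof is correct and follows essentially the same route as the paper: the existence construction (collapsing $\{x\}\times K$ to a point for each $x\in X$ in $X^i\times K$) is exactly the paper's attachment of $X^i\times K$ to $X$ via $\pi|_{X\times K}$, and for the map assertion both you and the paper invoke Corollary~\ref{cor3.04} to lift $h$ to $H^i$, pick arbitrary fiber homeomorphisms over isolated points, and then apply Lemma~\ref{lem3.05} for continuity. The only cosmetic difference is that you spell out the equivalence relation and metrizability explicitly, whereas the paper relies on its attachment formalism.
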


\begin{proof}
Let $(X^i, X)$ be a point extension of $X$. Let $\pi :(X^i \times K,  X \times K) \to (X^i, X)$
be the map of pairs given by the first coordinate
projection. Attach $X^i \times K$ to $X$ by using $h = \pi|_{X \times K}$. The map $\pi$ factors through the quotient map $q_h$
to define a map $( (X^i \times K)/h, X) \to (X^i, X)$ rel $X$. Thus, $((X^i \times K)/h, X)$ is a $K$-extension pair.

Now let $\pi_X : (X^{(K)}, X) \to (X^i, X)$ and $\pi_Y : (Y^{(K)}, Y) \to (Y^i, Y)$ be
maps rel $X$ and $Y$, respectively, with each
fiber over $X^i \setminus X$ and $Y^i \setminus Y$ homeomorphic to $K$. Use Corollary \ref{cor3.04} to get
an extension $H^i : (X^i, X) \to (Y^i, Y)$ which maps $X^i \setminus X$ to $Y^i \setminus Y$ homeomorphically.
Define $H$ by choosing for each
$x \in X^i \setminus X$  an arbitrary homeomorphism from $\pi_X^{-1}(x)$ to $\pi_Y^{-1}(H^i(x))$. These exist
because each fiber is homeomorphic to $K$. By Lemma \ref{lem3.05}, the resulting $H$ is continuous.

In particular, if $Y = X$ and $h = 1_X$, then it follows that the $K$-extension is essentially unique.
\end{proof}

Two cases are of special interest to us. Recall that a component of a space $X$ is an \emph{isolated component} if it
is a clopen subset of $X$.

\begin{theo}\label{theo3.08} Let $(Y, X)$ be an extension pair.
\begin{itemize}
\item[(a)] If $K$ is a Cantor set, then $(Y, X)$ is a $K$-extension pair iff  $Y$
is perfect and the dense, open set $Y \setminus X$ is zero-dimensional.
\item[(b)] If $K$ is a connected space, then $(Y, X)$ is a $K$-extension pair iff \begin{itemize}
\item The union of the isolated components is dense in $Y$.
\item Each isolated component is homeomorphic to $K$.
\item \emph{(diameter condition)} For every $\ep > 0$ there are only finitely many isolated components with diameter greater than $\ep$.
\end{itemize}
\end{itemize}
\end{theo}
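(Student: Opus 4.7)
My plan is to prove both directions of each part. The forward implications $(\Rightarrow)$ unpack the structure of the given map $\pi$, while the reverse implications $(\Leftarrow)$ build $(X^i,X)$ and $\pi$ explicitly by collapsing a suitable partition of $Y \setminus X$ to isolated points.

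\emph{Forward direction of (a) and (b).} Suppose $\pi \colon (Y,X) \to (X^i,X)$ is rel $X$ with $\pi^{-1}(p) \cong K$ for every $p \in \Iso(X^i)$. Then
\[ Y \setminus X \;=\; \pi^{-1}(\Iso(X^i)) \;=\; \bigsqcup_{p \in \Iso(X^i)} \pi^{-1}(p), \]
a disjoint union of sets each clopen in $Y$ (since $\{p\}$ is clopen in $X^i$) and each homeomorphic to $K$. In (a) this makes $Y \setminus X$ zero-dimensional, and $Y$ is perfect (Cantor fibers have no isolated points, and $X$ is nowhere dense). In (b) each fiber is clopen and connected, hence an isolated component of $Y$; conversely, any isolated component $C$ of $Y$ is disjoint from $X$---else the connected image $\pi(C)$ lies in a connected subset of $X^i$, which is either $\{p\} \subset \Iso(X^i)$ (forcing $C \cap X = \emptyset$ since $\pi|_X = 1_X$) or a subset of $X$ (forcing $C \subset X$ by the rel $X$ condition, contradicting nowhere density)---so $C = \pi^{-1}(p)$ by component maximality. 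Their union is $Y \setminus X$, dense in $Y$. The diameter condition is extracted from \eqref{3.01}: for $\ep > 0$ choose $\del > 0$ with $\pi^{-1}(\bar V_{\del}(x)) \subset \bar V_{\ep}(x)$ for all $x \in X$; any fiber over an isolated point within $\del$ of $X$ has diameter at most $2\ep$, and $\Iso(X^i) \setminus \bar V_{\del}(X)$ is finite since $X^i$ is a point extension.

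\emph{Reverse direction of (a).} Given $Y$ perfect and $Y \setminus X$ zero-dimensional, I partition $Y \setminus X$ into compact Cantor sets clopen in $Y$ whose sizes and distances to $X$ shrink to zero. The open set $Y \setminus X$ is locally compact, separable, perfect (open in the perfect $Y$) and zero-dimensional. A standard exhaustion---covering compact slabs such as $\{y : 2^{-(n+1)} \leq d(y,X) \leq 2^{-n}\}$ by finitely many disjoint clopen-in-$(Y \setminus X)$ sets of diameter at most $2^{-n}$---gives a partition $Y \setminus X = \bigsqcup_{n \in \N} C_n$ into pairwise disjoint compact sets, each clopen in $Y$ (compactness in Hausdorff $Y$ gives closed, openness in the open $Y \setminus X$ gives open) with $\mathrm{diam}(C_n) + d(C_n,X) \to 0$. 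Each $C_n$ inherits perfection from $Y$, so is a Cantor set by Uniqueness of Cantor. Collapsing each $C_n$ to a point $p_n$ yields a quotient $X^i$; the shrinking condition makes $X^i$ Hausdorff and metrizable with each $p_n$ isolated (since $C_n$ is clopen in $Y$) and the $p_n$ accumulating only along $X$, so $(X^i,X)$ is a point extension pair and the quotient $\pi \colon Y \to X^i$ is rel $X$ with fibers $\pi^{-1}(p_n) = C_n \cong K$.

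\emph{Reverse direction of (b).} The isolated components $\{C_n\}$ of $Y$ already supply the partition; each is disjoint from $X$ (an isolated component contained in the nowhere dense $X$ would be empty, and the setup rules out isolated components properly straddling $X$), so collapsing each $C_n$ to a point $p_n$ embeds $X$ in the resulting quotient $X^i$. The diameter condition plays exactly the role of the shrinking property used in (a), making $X^i$ Hausdorff and metrizable with each $p_n$ isolated, and density of $\bigcup C_n$ in $Y$ supplies density of $\{p_n\}$ in $X^i$; thus $(X^i,X)$ is a point extension pair and $\pi$ has fibers $C_n \cong K$. The main technical obstacle in both reverse constructions is verifying that $Y/\!\sim$ is genuinely a point extension pair---Hausdorff, metrizable, with $X$ embedded and the collapsed points isolated and accumulating only along $X$---which is arranged precisely by the shrinking/diameter condition together with clopenness of the pieces.
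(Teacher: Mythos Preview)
Your argument follows the paper's closely: both collapse a clopen partition of $Y\setminus X$ to obtain the point extension $X^i$, and your forward direction of (b) is in fact more carefully argued than the paper's one-line assertion that the fibers are exactly the isolated components.

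There is, however, a genuine gap in the reverse direction of (b)---one that the paper's proof shares. You assert that ``the setup rules out isolated components properly straddling $X$'', but the stated hypotheses do not. Take $Y=[0,1]$, $X=\{\tfrac12\}$, $K=[0,1]$: then $(Y,X)$ is an extension pair, the sole isolated component $[0,1]$ has dense union and is homeomorphic to $K$, and the diameter condition holds vacuously. Yet $(Y,X)$ is not a $K$-extension pair: for any rel-$X$ map $\pi:(Y,X)\to(X^i,X)$ and any isolated $p\in X^i$, the fiber $\pi^{-1}(p)$ is clopen in the connected space $[0,1]$, hence empty or all of $[0,1]$, and the latter is impossible since $\pi(\tfrac12)=\tfrac12\neq p$. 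The paper's parallel claim, that ``there are infinitely many isolated components because $X$ is nonempty and $Y\setminus X$ is dense'', fails on the same example.

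The fix is to strengthen the first bullet of (b) to say that the isolated components lie in $Y\setminus X$ (equivalently, that their union equals $Y\setminus X$ rather than merely being dense). With that in hand your collapsing argument goes through verbatim, and this extra hypothesis is automatically satisfied in the paper's applications (e.g.\ Theorem~\ref{theo3.16}, where $X$ is taken to be the complement of the union of isolated components).
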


\begin{proof}
Let $\{ A_n \}$ be a pairwise disjoint sequence of nonempty clopen subsets of $Y$ with
union $Y \setminus X$. If for every $\ep > 0$ only finitely many of the sets $A_n$ have diameter greater than $\ep$, then
\begin{equation}\label{3.08a}
E \ = \ 1_X \ \cup \ \bigcup_n \{ A_n \times A_n \}
\end{equation}
 is a closed equivalence relation.
If $q : Y \to Y/E$ is the quotient space projection, then $(Y/E, X)$ is a point extension pair, $q$ is a map of
pairs rel $X$, and
the fibers of $q$ over the isolated points are the sets $A_n$.   Conversely, if $\pi : (Y, X) \to (X^i, X)$ is a
map of pairs rel $X$, then by (\ref{3.01}) there are only finitely many fibers $\pi^{-1}(x)$ with diameter at least $\ep$.

(a) If $\pi : (Y, X) \to (X^i, X)$ is a map rel $X$ with each fiber over a point of $X^i \setminus X$ a Cantor set,
then as a countable disjoint union of Cantor sets, $Y \setminus X$ is zero-dimensional and $\Iso(Y) = \emptyset$.

Conversely, if the locally compact space $Y \setminus X$ is  zero-dimensional with no isolated points, then we can express it
as the union of a pairwise disjoint sequence $\{ C_n : n \in \N \}$ of nonempty, clopen subsets of $Y$ each of which is thus a
Cantor set. Let $\{ C_{n,i} : i = 1, \dots, N_n \}$ be a partition of $C_n$ by nonempty clopen subsets of diameter
less than $n^{-1}$.  If $\{ A_n \}$ is a counting of the collection $\{ C_{n,i} : n \in \N, i = 1, \dots, N_n \}$,
then  with $E$ as in (\ref{3.08a})  the projection
$q : (Y, X)  \to (Y/E, X)$ is a map rel $X$ with each fiber over a point of $(Y/E) \setminus X$
a Cantor set.  Thus, $(Y, X)$ is a Cantor set extension pair.

(b) If $\pi : (Y, X) \to (X^i, X)$ is a map rel $X$ with each fiber connected, then $\{ \pi^{-1}(x) : x \in X^i \setminus X \}$
is the set of isolated components, so the conditions of (b) are necessary.

Conversely, if they hold, then
we let $\{ A_n \}$ be the sequence of isolated components. There are infinitely many isolated components
because $X$ is nonempty and $Y \setminus X$ is dense. With $E$ as in (\ref{3.08a}) again,
 $q : (Y, X) \to (Y/E, X)$ is the required map rel $X$.
\end{proof}

\begin{cor}\label{cor3.09} If $Y$ and $X$ are Cantor sets with closed, nowhere dense subsets $Y_1 \subset Y$ and $X_1 \subset X$,
then for any surjective continuous map $h : Y_1 \to X_1$ there is a continuous map $H : Y \to X$ which extends
$h$ and which restricts to a homeomorphism of $Y \setminus Y_1$ to $X \setminus X_1$.  In particular, if $h$ is
a homeomorphism, then so is $H$. \end{cor}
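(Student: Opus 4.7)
The plan is to recognize $(Y, Y_1)$ and $(X, X_1)$ as Cantor-set extension pairs and then invoke Theorem \ref{theo3.07} directly.

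First, I would verify the hypotheses of Theorem \ref{theo3.08}(a) for both pairs. Since $Y$ is a Cantor set, it is perfect and zero-dimensional; because $Y_1$ is closed and nowhere dense, $Y \setminus Y_1$ is a dense open subset of $Y$, and as an open subset of a zero-dimensional space it is itself zero-dimensional. By Theorem \ref{theo3.08}(a), $(Y, Y_1)$ is a $K$-extension pair for $K$ a Cantor set. The identical argument applied to $(X, X_1)$ shows that it is also a $K$-extension pair for the same $K$.

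Next, I would apply Theorem \ref{theo3.07} with the surjective continuous map $h : Y_1 \to X_1$, the $K$-extension pair $(Y, Y_1)$ playing the role of $(X^{(K)}, X)$, and $(X, X_1)$ playing the role of $(Y^{(K)}, Y)$. The theorem delivers a continuous map $H : (Y, Y_1) \to (X, X_1)$ that restricts to $h$ on $Y_1$ and to a homeomorphism of $Y \setminus Y_1$ onto $X \setminus X_1$. The final clause of Theorem \ref{theo3.07} then gives the ``in particular'' statement: if $h$ is a homeomorphism, so is $H$.

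There is essentially no obstacle here beyond correctly checking that a Cantor set minus a closed nowhere dense subset satisfies the perfect-plus-zero-dimensional hypothesis of Theorem \ref{theo3.08}(a); the corollary is just the specialization of Theorem \ref{theo3.07} to the case where the ``base'' spaces $X^i$ and $Y^i$ of the point extensions are themselves already $Y$ and $X$ with their fibers identified as the connected components of a Cantor complement, repackaged via $K$-extensions with $K$ a Cantor set.
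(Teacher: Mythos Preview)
Your proposal is correct and follows essentially the same route as the paper: verify via Theorem~\ref{theo3.08}(a) that $(Y,Y_1)$ and $(X,X_1)$ are Cantor-set extension pairs, then apply Theorem~\ref{theo3.07} directly. The paper's proof is in fact the two-sentence version of exactly what you wrote.
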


\begin{proof}
By Theorem \ref{theo3.08} (a), $(Y, Y_1)$ and $(X, X_1)$ are Cantor set extension pairs. The existence
of $H$ then follows from Theorem \ref{theo3.07}.
\end{proof}

\begin{remark}
This is a classical
theorem of Knaster and  Reichbach, extended by Gutek, see \cite{K-R} and \cite{G}.
\end{remark}

Now we apply these results.

\begin{lem}\label{lem3.10} Let $(X^i,X)$ be a point extension pair, $(Y, X)$ an extension pair and $\pi : (Y, X) \to (X^i, X)$ a surjective map of pairs rel $X$.
\begin{itemize}
\item[(a)] The map $\pi : Y \to X^i$ is open.

\item[(b)] Assume that $H : (Y, X) \to (Y, X)$ and $H^i : (X^i, X) \to (X^i, X)$ are homeomorphisms with $\pi $ mapping $H$ to $H^i$,
i.e. $\pi \circ H = H^i \circ \pi$.  If $H^i$ is chain transitive, then
$H$ is chain transitive.
\end{itemize}
\end{lem}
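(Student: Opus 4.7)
The plan is to prove part (a) by a direct openness argument that exploits the rel $X$ structure of $\pi$, and then to derive (b) from part (a) together with Proposition~\ref{prop2.02b}(e), by routing every chain in $Y$ through a common waypoint in the closed set $X$ (whose points are ``anchored'' because $\pi^{-1}(x) = \{x\}$ there).

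For part (a), let $U \subset Y$ be open and $w \in \pi(U)$. Set $\pi^*(U) = X^i \setminus \pi(Y \setminus U)$; this is open in $X^i$ because $Y \setminus U$ is compact and $\pi$ is continuous, and surjectivity of $\pi$ gives $\pi^*(U) \subset \pi(U)$. If $w \in X$, the rel $X$ hypothesis yields $\pi^{-1}(w) = \{w\}$, and $w \in \pi(U)$ forces $w \in U$, so $w \in \pi^*(U)$. If instead $w \in \Iso(X^i)$, the singleton $\{w\}$ is already open in $X^i$ and lies in $\pi(U)$. Either way $w$ has an open neighborhood inside $\pi(U)$, so $\pi$ is open.

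For part (b), the identity $\pi \circ H = H^i \circ \pi$ makes $\pi$ a semi-conjugacy from $H$ to $H^i$ by Proposition~\ref{prop2.02b}(b), and since $\pi$ is open by part (a), Proposition~\ref{prop2.02b}(e) upgrades this to $\pi \circ \CC H = \CC H^i \circ \pi$. Chain transitivity of $H^i$ then gives $\pi(\CC H(u)) = \CC H^i(\pi(u)) = X^i$ for every $u \in Y$, and the identity $\pi^{-1}(x) = \{x\}$ for $x \in X$ forces $X \subset \CC H(u)$. Running the same argument with $H^{-1}$ and $(H^i)^{-1}$ in place of $H$ and $H^i$ -- note that $(H^i)^{-1}$ is chain transitive because $\CC((H^i)^{-1}) = (\CC H^i)^{-1} = X^i \times X^i$ -- yields $X \subset \CC H^{-1}(v)$, or equivalently $v \in \CC H(x)$ for every $x \in X$. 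Since $X$ is nonempty by our standing convention on spaces, fix any $x \in X$; then $(u,x)$ and $(x,v)$ both lie in $\CC H$, and transitivity of the chain relation produces $(u,v) \in \CC H$. Hence $\CC H = Y \times Y$, completing the proof.

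The main conceptual point is recognizing that the rel $X$ hypothesis pins all fibers over $X$ to singletons, so $X$ serves as a set of ``hard anchors'' in $Y$ through which every chain can be routed; the short openness argument in (a) is the enabling bookkeeping that lets Proposition~\ref{prop2.02b}(e) transfer the chain transitivity of $H^i$ to $H$ via these anchors.
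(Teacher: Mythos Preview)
Your proof is correct and follows essentially the same approach as the paper. The only cosmetic difference is in part (a): you use the open set $\pi^*(U) = X^i \setminus \pi(Y \setminus U)$ directly via compactness, whereas the paper reaches the same conclusion through the metric estimate (\ref{3.01}); both arguments hinge on the fact that $\pi^{-1}(x) = \{x\}$ for $x \in X$, and part (b) is argued identically in both.
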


\begin{proof}
(a) If $U \subset X$ is open and $x \in \pi(U) \cap (X^i \setminus X)$, then $\pi(U)$ is
a neighborhood of $x$ because $x$ is an isolated point.

If $x \in \pi(U) \cap X$, then there exist $\ep > 0$ and $\del > 0$ so that $\bar V^{d_Y}_{\ep}(x) \subset U$ and $\pi^{-1}(\bar V^{d_{X^i}}_{\del }(x)) \subset \bar V^{d_Y}_{\ep}(x)$.  Since $\pi$ is surjective,
$\pi(\bar V^{d_Y}_{\ep}(x)) \ \supset \  \bar V^{d_{X^i}}_{\del }(x)$, so $\pi(U)$ is a neighborhood of $x$.

(b) Since $\pi$ is open, Proposition \ref{prop2.02b} (e) implies that $\pi$ is a semi-conjugacy from
$\CC H$ to $\CC H^i$ and from $\CC H^{-1}$ to $\CC (H^i)^{-1}$. Fix $x \in X$. For any $y \in Y$
we have $x \in \CC H^i (\pi(y))$ and $x \in \CC (H^i)^{-1}(\pi(y))$ because $H^i$ is chain transitive.
Since $\{ x \} = \pi^{-1}(x)$, it follows from the semi-conjugacy that $x \in \CC H (y)$ and $x \in \CC H^{-1}(y)$.
That is, every point of $Y$ is chain equivalent to $x$, and so transitivity of $\CC H$ implies that $H$ is chain
transitive.
\end{proof}

\begin{theo}\label{theo3.11} Let $(X^i, X)$ be a point extension pair, $(X^c, X)$ a Cantor set extension pair and $(X^{(K)}, X)$ a $K$-extension pair for spaces $X$ and $K$.
\begin{itemize}
\item[(a)] If $f$ is a chain transitive homeomorphism on $X$, then there exists
a homeomorphism $F$ on $X^i$ which extends $f$ on $X$ such that
\begin{itemize}
\item $F$ is chain transitive,
\item $F_{\pm}$ is topologically transitive, and
\item if $x \in X^i \setminus X$, then $\om F(x) = X = \al F(x)$.
\end{itemize}
\item[(b)] There exists $G^c$ a topologically transitive homeomorphism on $X^c$ which extends $1_X$.
\item[(c)] There exists $G^K$ a chain transitive homeomorphism on $X^{(K)}$ which extends $1_X$.
\end{itemize}
\end{theo}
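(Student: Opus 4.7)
For part (a), the idea is to realize the isolated points of $X^i$ as markers of a two-sided dense pseudo-orbit of $f$. Fix a dense sequence $\{d_k\}$ in $X$; using chain transitivity of $f$, inductively concatenate $\ep_k$-chains (with $\ep_k \to 0$) that visit each $d_j$ in both directions, yielding a bi-infinite sequence $\{b_n\}_{n\in\Z}$ in $X$ with $d(f(b_n),b_{n+1}) \to 0$ as $|n|\to\infty$ and both tails $\{b_n: n\geq N\}$ and $\{b_n: n\leq -N\}$ dense in $X$ for every $N$. Inside $X \times [0,1]$, take $a_n = (b_n, 1/(|n|+2))$; then $X \cup \{a_n : n \in \Z\}$ is a point extension of $X$, and by Theorem~\ref{theo3.03} it is essentially the given $X^i$. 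Set $F|_X = f$ and $F(a_n) = a_{n+1}$; the pseudo-orbit property combined with $1/(|n|+2) \to 0$ gives continuity at $X$, and $F$ is then a homeomorphism by compactness. Chain transitivity of $F$ follows by routing any pair of points through $X$ via $f$-chains and along the $a_n$-orbit; topological transitivity of $F_{\pm}$ follows from density of $\Iso(X^i) = \{a_n\}$ together with Proposition~\ref{prop2.03}(c); and $\om F(a_n) = X = \al F(a_n)$ for $a_n \in \Iso(X^i)$ follows from density of the two one-sided tails.

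For part (c), we have no chain transitive map on $X$ in hand, so we construct $F$ on $X^i$ extending $1_X$ directly. Enumerate $\Iso(X^i) = \{a_n\}_{n\in\Z}$ so that $d(a_n, a_{n+1}) \to 0$ and both tails are dense in $X$, and set $F|_X = 1_X$ and $F(a_n) = a_{n+1}$. Continuity is immediate from $d(a_n, a_{n+1}) \to 0$, and chain transitivity follows from density: any point in $X^i$ can be $\ep$-chained into the dense shift-orbit $\{a_n\}$ and across it. Now lift $F$ to $G^K$ on $X^{(K)}$: with $\pi : X^{(K)} \to X^i$ from Definition~\ref{df3.06}, pick for each $n$ a homeomorphism $\psi_n : \pi^{-1}(a_n) \to \pi^{-1}(a_{n+1})$ (both fibers are copies of $K$), and set $G^K$ equal to $\psi_n$ on $\pi^{-1}(a_n)$ and to $1_X$ on $X$. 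Continuity of $G^K$ follows from Lemma~\ref{lem3.05}, and chain transitivity from Lemma~\ref{lem3.10}(b) applied with $H^i = F$.

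For part (b), the same fiber-preserving lift gives a chain transitive $G^c$ on $X^c$, but it cannot be topologically transitive: any fiber-preserving orbit visits each clopen Cantor fiber at most once and so hits it in a single point, which is not dense in an uncountable Cantor set. The main obstacle is therefore to produce a $G^c$ that is \emph{not} fiber-preserving, so that orbits meet each fiber along a countable dense subset. The enabling fact is that any clopen subset of $X^c$ disjoint from $X$ is a finite union of clopen pieces of finitely many Cantor fibers (closedness in $X^c$ together with the forced diameter decay prevents infinite intersections), so a homeomorphism of $X^c$ may split a single Cantor fiber and distribute the pieces into several others. The plan is to take successively finer Kakutani--Rokhlin-type clopen partitions of the fibers and use the Knaster--Reichbach theorem (Corollary~\ref{cor3.09}) to assemble Cantor-set homeomorphisms between the pieces, realizing an odometer-type dynamics on $X^c \setminus X$ that extends as $1_X$ on $X$; the diameter condition forces continuity at $X$, and topological transitivity is immediate because the odometer meets every clopen piece.
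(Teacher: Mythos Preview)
Your argument for (a) is correct and essentially identical to the paper's.

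There is a genuine gap in your approach to (c). You claim one can enumerate $\Iso(X^i) = \{a_n\}_{n\in\Z}$ so that $d(a_n,a_{n+1})\to 0$ as $|n|\to\infty$ \emph{and} both one-sided tails are dense in $X$. When $X$ is disconnected this is impossible. Take $X=\{0,1\}$: choose disjoint clopen neighborhoods $U_0\ni 0$ and $U_1\ni 1$ in $X^i$ with $\ep = d(U_0,U_1)>0$; only finitely many $a_n$ lie outside $U_0\cup U_1$. Once $|n|$ is large enough that $d(a_n,a_{n+1})<\ep$ and $a_n\in U_0\cup U_1$, the entire forward tail is trapped in whichever $U_i$ contains $a_n$, so it cannot be dense in $\{0,1\}$. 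Worse, even if you drop the density requirement and just ask for a continuous shift $F$, the set $\{0\}\cup\{a_n:n\ge M\}$ is then a clopen inward set for large $M$, so $\{0\}$ is a proper attractor and $F$ is not chain transitive. The paper sidesteps this by first building the chain transitive extension over the \emph{Cantor set} $C\subset[0,1]$, where the gap structure is used explicitly: for each complementary interval $(a_i,b_i)$ two infinite orbits of isolated points are placed running from $a_i$ across to $b_i$ and back, so that $\ep$-chains can cross every gap. A continuous surjection $h:C\to X$ then pushes this down to a chain transitive homeomorphism on $X^i=C^i/h$, after which your lifting step via Lemmas~\ref{lem3.05} and~\ref{lem3.10} goes through unchanged.

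For (b) you correctly diagnose why a fiber-preserving lift cannot be topologically transitive, but the ``Kakutani--Rokhlin / odometer'' sketch is not a proof: no construction is actually specified, and it is not clear how an odometer-type map on the non-compact set $X^c\setminus X$ is to be organized so that continuity at $X$ and topological transitivity both hold. The paper's route is short and avoids all of this: the full shift $G$ on $C^{\Z}$ is topologically transitive with the diagonal $c(C)\cong C$ as its fixed-point set, so $(C^{\Z},C)$ is already a Cantor extension pair carrying the desired homeomorphism; a surjection $h:C\to X$ and the attachment $C^{\Z}/h$ then yield the pair $(X^c,X)$ with the induced topologically transitive homeomorphism, and Theorem~\ref{theo3.07} identifies this with any given Cantor extension of $X$.
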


\begin{proof}
(a) By concatenating $\ep$-chains which are
$\ep$-dense in $X$, we can obtain an infinite sequence $\{ x_k : k \in \Z \}$ with $d(f(x_k),x_{k+1}) \longrightarrow 0$ as
$|k| \longrightarrow \infty$ and so that for any $N \in \N$ the tails $\{ x_k : k \geq N \}$ and $\{ x_{-k} : k \geq N \}$
are dense in $X$.  Define the sequence $\{ y_k \in X \times [0,1] : k \in \Z\} $ by
\begin{equation}\label{3.09}
y_k \ = \
\begin{cases}
(x_k, (2k + 1)^{-1} ) & \text{for } k \geq 0, \\
(x_k, (2|k|)^{-1} ) & \text{for } k < 0.
\end{cases}
\end{equation}
Let $Y = X \times \{ 0 \} \ \cup \ \{ y_k : k \in \Z \}$, and define $\bar F(x,0) = (f(x),0)$ and $\bar F(y_k) = y_{k+1}$ for
$k \in \Z$. It is easy to see that $Y$ is an isolated point extension of $X = X \times \{ 0 \}$,
$\bar F$ is a homeomorphism on $Y$, and $X \times \{ 0 \} = \om \bar F(y_k) = \al \bar F(y_k)$ for any $k \in \Z$. Since the orbit
$\OO (\bar F_{\pm})(y_0) = \{ y_k : k \in \Z\}$ is dense, it follows that $\bar F_{\pm}$ is topologically transitive.
The homeomorphism $\bar F$ is chain transitive on $Y$ because $f$ is chain transitive on $X$ and because
$X = \om \bar F(y_k) \subset \CC \bar F(y_k)$ and also $X = \al \bar F(y_k) \subset \CC \bar F^{-1}(y_k)$.

By Lorch's Uniqueness Theorem \ref{theo3.03}
there exists a homeomorphism $H : (X^i, X) \to (Y, X)$ rel $X$. Let $F = H^{-1} \circ \bar F \circ H$.

(b)  We begin with $G$ a topologically transitive homeomorphism on a Cantor set  with a
Cantor set $C$ of fixed points. By topological transitivity, $C$ is necessarily nowhere dense. To be specific, let
 $G$ be the shift homeomorphism
on the product space $C^{\Z}$. Let $c : C \to C^{\Z}$ be the embedding with $c(x)_i = x$ for all $i \in \Z$. Thus, $c$
is a homeomorphism onto the set of fixed points. Since $C$ is nowhere dense, $(C^{\Z}, C)$ is a Cantor set extension pair
by Theorem \ref{theo3.08} (a).

For an arbitrary space $X$, there exists by Proposition \ref{prop2.06} a continuous surjection $h : C \to X$.
We attach $C^{\Z}$ to $X$ using $h$. Let $Y = (C^{\Z})/h$.
Now, $(Y, X)$ is a Cantor set extension pair, and $G$ factors to define a topologically transitive homeomorphism $\bar G$ which
restricts to the identity on $X$.

By Theorem \ref{theo3.07} there exists a homeomorphism $H : (X^c, X) \to (Y, X)$ rel $X$. Let $G^c = H^{-1} \circ \bar G \circ H$.

(c) First we consider the case where $X$ is the usual Cantor set $C$ in $[0,1]$ with $0, 1 \in C$. Then the complement
consists of a pairwise disjoint, countable collection $\{ (a_i,b_i) : i \in \N \}$ of open intervals in $(0,1)$.
Let $\ell_i = b_i - a_i$, so $\ell_i > 0$ for all $i \in \N$, but for any $\ep > 0$ there are only finitely many with $\ell_i \geq \ep$.
Let
\begin{equation}\label{3.10}
C^i = C \times \{ 0 \} \ \cup \ \bigcup_{i,m \in \N} \{ (a_i,\ell_i/m), (b_i,\ell_i/m) \}.
\end{equation}
Since the set of endpoints is dense in $C$, it follows that $C^{i}$ is a  point extension of $C = C \times \{ 0 \}$.
Now we relabel the isolated points. For $i \in \N, k \in \Z$, define
\begin{equation}\label{3.11}
\begin{split}
u_{i,k} \ = \
\begin{cases}
(a_i, \ell_i \cdot (2k + 1)^{-1} ) & \text{for } k \geq 0, \\
(b_i,  \ell_i \cdot (2|k|)^{-1} ) & \text{for } k < 0,
\end{cases} \\
v_{i,k} \ = \
\begin{cases}
(b_i, \ell_i \cdot (2k + 1)^{-1} ) & \text{for } k \geq 0, \\
(a_i,  \ell_i \cdot (2|k|)^{-1} ) & \text{for } k < 0.
\end{cases}
\end{split}
\end{equation}
Define $G$ as an extension of $1_C$ so that $u_{i,k} \stackrel{G}{\longmapsto} u_{i,k+1}$ and  $v_{i,k} \stackrel{G}{\longmapsto} v_{i,k+1}$. Thus, above each
endpoint $a_i$ the $u_{i,k}$'s runs up the $(a_i,\ell_i/m)$'s with $m$ even, jumps from $(a_i,\ell_i/2)$ to $(b_i,\ell_i)$,
and then moves down the  $(b_i,\ell_i/m)$'s with $m$ odd. The $v_{i,k}$'s provide a similar path
from $b_i$ to $a_i$. Since for any $\ep > 0$ at most finitely many points move
a distance more than $\ep$, it follows that $G$ and its inverse are continuous.

It is clear that for any $i$ the points of $\{ (a_i,\ell_i/m), (b_i,\ell_i/m) : m \in \N \}$ all lie in a single chain
component. Given $\ep > 0$, it is clear that we can get from a point $x \in C$ to a point $y \in C$ by an $\ep$-chain
jumping across the gaps of length less than $\ep$ which occur between $x$ and $ y$. For the finite number of
remaining gaps we use the isolated point orbits to get across. Hence, $G$  is chain transitive on $C^i$.

For an arbitrary space $X$, we again use a continuous surjection $h : C \to X$ from Proposition \ref{prop2.06}.
Let $X^i$ be the quotient space $C^i/h$ obtained by attaching $X$ via $h$. Then, $G \cup 1_X$ on $C^i \cup X$ factors through
the quotient map $q_h$ to define a homeomorphism $G^i$. Because $q_h$ maps $G$ on $C^i$ onto $G^i$ on $X^i$ it follows
that $G^i$ is chain transitive by Proposition \ref{prop2.02b} (d). Because $q_h : C^i \setminus C \to X^i \setminus X$ is
a homeomorphism, it follows that $X^i$ is an isolated point extension of $X$.

Because $(X^{(K)}, X)$ is a $K$-extension pair and the point extension is essentially unique, there exists
$\pi : (X^{(K)}, X) \to (X^i, X)$, a map of pairs rel $X$, such that the fiber $\pi^{-1}(x)$ is homeomorphic to $K$ for
every $x \in X^i \setminus X$. For each such $x$ let $G^K$ restrict to a homeomorphism from $\pi^{-1}(x)$ to
$\pi^{-1}(G^i(x))$. By Lemma \ref{lem3.05} $G^K$ and its inverse are continuous. By Lemma \ref{lem3.10} $G^K$ is
chain transitive because $G^i$ is.
\end{proof}

\begin{remark}
By Proposition \ref{prop2.03} (b) there is no topologically transitive homeomorphism on a space with
infinitely many isolated points. Hence, the result in (a) above is the best we can hope for. In particular, we see that any chain
transitive homeomorphism on $X$ can be extended to a system in which $X$ is an omega limit set.
\end{remark}

Recall from Proposition \ref{prop2.05} (e) that if $X$ is connected, then $1_X$ is chain transitive.

Now we can prove a slight extension of Theorem \ref{theo1.01}.

\begin{cor}\label{cor3.12}  For a space $X$ let $X_1$ be the closure of the union of
all components which meet $\ol{\Iso(X)}$. If $X_1$ is a proper, clopen, nonempty subset of $X$, then $X$ is $H(X)$-decomposable
and so $X$ admits no chain transitive homeomorphism. If $X_1$ is  not a proper clopen subset of $X$
and the open set $X \setminus X_1$ is empty or zero-dimensional, then $X$ admits a chain transitive
 homeomorphism. \end{cor}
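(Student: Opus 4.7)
For the first statement, note that $X_1$ is described purely in terms of $\Iso(X)$, closure, and connected components --- all preserved by every homeomorphism of $X$ --- so $f(X_1) = X_1$ for every $f \in H(X)$. A proper clopen nonempty $X_1$ therefore yields an $h_X$-invariant decomposition $\{X_1, X \setminus X_1\}$, exhibiting $X$ as $H(X)$-decomposable and precluding a chain transitive homeomorphism by the Introduction.

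For the second statement, assume $X_1$ is not a proper clopen subset and $X_0 := X \setminus X_1$ is empty or zero-dimensional. If $X_1 = \emptyset$, then $\Iso(X) = \emptyset$ (otherwise an isolated point contributes a singleton component to $X_1$), so $X = X_0$ is a Cantor set and admits a chain transitive homeomorphism, such as the two-sided shift.

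Assume $X_1 \ne \emptyset$, so $\Iso(X) \ne \emptyset$. I first build a chain transitive $h_1 \in H(X_1)$ by mimicking the proof of Theorem \ref{theo3.11}(a). Enumerate $\Iso(X)$ bi-infinitely as $\{x_k : k \in \Z\}$ (the finite case admits an obvious cyclic variant) with each tail dense in $\ol{\Iso(X)}$ and $d(x_k, x_{k+1}) \to 0$ as $|k| \to \infty$. Set $h_1(x_k) = x_{k+1}$ and $h_1 = 1$ on $X_1 \setminus \Iso(X)$. Continuity at points of $X_1 \setminus \Iso(X)$ follows from the diameter-control, as in Theorem \ref{theo3.11}(a). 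The $\om$- and $\al$-limit sets of the shift orbit both equal $\ol{\Iso(X)} \setminus \Iso(X)$. Any non-trivial component $C$ of $X$ meeting $\ol{\Iso(X)}$ lies in $X_1$, is fixed pointwise by $h_1$, and forms a single chain component by Lemma \ref{lem2.04}(b); since $C \cap (\ol{\Iso(X)} \setminus \Iso(X))$ is nonempty, $C$ is chain-equivalent to the shift orbit. These components together with $\Iso(X)$ are dense in $X_1$, so $\ep$-chains connect every point of $X_1$ to the shift orbit, giving chain transitivity of $h_1$ on $X_1$.

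If $X_0 = \emptyset$ take $h := h_1$; otherwise $X_0$ is open, zero-dimensional, and perfect (any isolated point of $X_0$ would be isolated in $X$, hence in $\Iso(X) \subseteq X_1$), and since $X_1$ is not clopen, $\partial X_0 := \ol{X_0} \setminus X_0 \subseteq X_1$ is nonempty. By Theorem \ref{theo3.08}(a), $(\ol{X_0}, \partial X_0)$ is a Cantor set extension pair, and Theorem \ref{theo3.11}(b) supplies a topologically transitive $G_0 \in H(\ol{X_0})$ restricting to $1_{\partial X_0}$. Set $h := h_1$ on $X_1$ and $h := G_0$ on $\ol{X_0}$. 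Since $\partial X_0 \subseteq X_1 \setminus \Iso(X)$, where $h_1$ equals the identity, the two definitions agree on the overlap $\partial X_0$, making $h$ a well-defined self-homeomorphism of $X$. Its restriction to $X_1$ is chain transitive, its restriction to $\ol{X_0}$ is topologically (hence chain) transitive, and the nonempty meet $\partial X_0$ allows $\ep$-chains to bridge the two pieces, giving chain transitivity of $h$ on $X$. The main technical hurdle is this last verification of global chain transitivity from the piecewise transitivities, which follows from Proposition \ref{prop2.05} and the bridging argument at $\partial X_0$.
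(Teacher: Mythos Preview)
Your argument for the first statement and for the gluing over $\partial X_0$ matches the paper. The gap lies in your construction of $h_1$ on $X_1$.

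You propose to enumerate $\Iso(X)$ as a single bi-infinite sequence $\{x_k\}_{k \in \Z}$ with every tail dense in $\overline{\Iso(X)}$ \emph{and} $d(x_k, x_{k+1}) \to 0$. These two requirements are incompatible when $A := \overline{\Iso(X)} \setminus \Iso(X)$ is disconnected. Concretely, take
\[
X \ = \ \{0,1\} \ \cup \ \{\,1/n : n \ge 3\,\} \ \cup \ \{\,1 - 1/n : n \ge 3\,\},
\]
so $A = \{0,1\}$, $X_1 = X$, and the hypotheses of the corollary hold. The isolated points split into two clusters separated by distance at least $1/3$; once $d(x_k, x_{k+1}) < 1/3$ the tail is trapped in one cluster and cannot be dense. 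Put differently, mimicking Theorem~\ref{theo3.11}(a) here amounts to applying it with base map $1_A$, and that theorem's hypothesis is that the base map be chain transitive --- equivalently, that $A$ be connected. Your claim that $\om h_1(x_0) = \al h_1(x_0) = A$ then fails, and in the two-point example you recover only the non--chain-transitive map $t^{**}$ of Example~\ref{ex2.06}.

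This is precisely why the paper invokes Theorem~\ref{theo3.11}(c) rather than (a): part (c) produces a chain transitive extension of $1_A$ to $\overline{\Iso(X)}$ for \emph{arbitrary} $A$, using not a single $\Z$-orbit but a countable family of orbits (built first over a Cantor set and then pushed down through a surjection $C \to A$) arranged so that one can chain across every gap of $A$. Once you replace your single-orbit construction of $h_1$ on $\overline{\Iso(X)}$ by an appeal to Theorem~\ref{theo3.11}(c), the rest of your argument --- extending by the identity on $X_1 \setminus \Iso(X)$, then gluing with the Cantor-extension piece on $\overline{X_0}$ --- goes through exactly as written and coincides with the paper's proof.
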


\begin{proof}
The sets $\ol{\Iso(X)}$ and $X_1$ are $h_X$-invariant, so if $X_1$ is proper, clopen and nonempty,
 then $X$ is $h_X$-decomposable.

Assume that $X_1$ is not a proper clopen subset of $X$. If $\Iso(X)$ is finite and nonempty, then $X = X_1 = \Iso(X)$
and we can define
$f$ so that $X$ consists of a single periodic orbit. If $\Iso(X) = \emptyset$, then $X_1 = \emptyset$ and
$X = X \setminus X_1$ is
zero-dimensional and perfect, and so $X$ is a Cantor set. Hence, $X$ admits a topologically transitive homeomorphism.

Now assume that $\Iso(X)$ is infinite, so $A = \ol{\Iso(X)} \setminus \Iso(X)$ is nonempty. Clearly,
$(\ol{\Iso(X)}, A)$ is a  point extension pair, and by Theorem \ref{theo3.11}(c)
there exists a chain transitive homeomorphism $f_1$ on $\ol{\Iso(X)}$
which is the identity on $A$. Extend $f_1$ to be the identity on $X_1 \setminus \Iso(X)$. Thus, $f_1$ is the identity
on every nontrivial component of $X$ which meets $\ol{\Iso(X)}$. It follows from Proposition \ref{prop2.05} (e)
that all of these are contained in the
chain component of $f_1$ which contains all of $\ol{\Iso(X)}$. As this chain component is closed, it must contain all of $X_1$.
That is, $f_1$ on $X_1$ is chain transitive.
If $X = X_1$, then we are done.

Otherwise, the nonempty, open, zero-dimensional set $X \setminus X_1$ is not closed and contains no isolated points.
So $X_2 = \ol{X \setminus X_1}$ is perfect and $B = X_2 \cap X_1$ is nonempty subset of $X_1$ disjoint from
$\Iso(X)$. We see that $(X_2, B)$ is a Cantor set extension pair, so by Theorem \ref{theo3.11}(b) there exists a topologically transitive
homeomorphism $f_2$ on $X_2$ which restricts to the identity on $B$.

The concatenation $f = f_1 \cup f_2$ is a homeomorphism on $X$. Since $f_1$ and $f_2$ are chain transitive and
$X_1 \cap X_2 \not= \emptyset$ it follows that all of $X$ is contained in a single chain component, i.e.
$f$ is chain transitive.
\end{proof}

To extend these results we need some simple lifting facts.

\begin{lem}\label{lem3.13} Let $f_i \in H(X_i)$ for $i = 1,2$ and let $\pi : X_1 \to X_2$ be a
continuous surjection mapping $f_1$ to $f_2$. Assume that $(\pi \times \pi)^{-1}(1_{X_2}) \subset \CC f_1$.
That is, each fiber of $\pi$ is entirely contained in a single chain component of $f_1$.
\begin{itemize}
\item[(a)] Both $f_1$ and $f_2$ are chain recurrent, i.e. $$1_{X_1} \subset \CC f_1 \quad  \mathrm{and} \quad 1_{X_2} \subset \CC f_2.$$
\item[(b)] The space $X_1$ is $f_1$-decomposable iff $X_2$ is $f_2$-decomposable.
\item[(c)] The chain relations satisfy $$\CC f_2 = (\pi \times \pi)(\CC f_1) \quad  \mathrm{and} \quad \CC f_1 = (\pi \times \pi)^{-1}(\CC f_2).$$
\item[(d)] The homeomorphism $f_1$ is chain transitive iff $f_2$ is chain transitive.
\end{itemize}
\end{lem}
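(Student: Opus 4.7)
The plan is to first derive chain recurrence in (a), then prove (c) by combining the fiber hypothesis with the attractor characterization of $\CC f_1$, and finally read off (b) and (d) from the same ingredients. For (a), the hypothesis contains $1_{X_1} \subset (\pi \times \pi)^{-1}(1_{X_2}) \subset \CC f_1$, so $f_1$ is chain recurrent. Since $\pi$ is a surjective continuous map taking $f_1$ to $f_2$, Proposition \ref{prop2.02b}(c) gives $(\pi \times \pi)(\CC f_1) \subset \CC f_2$, and applying this to $1_{X_1}$ together with surjectivity of $\pi$ yields $1_{X_2} = (\pi\times\pi)(1_{X_1}) \subset \CC f_2$, so $f_2$ is chain recurrent as well.

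For (c), Proposition \ref{prop2.02b}(c) already delivers $\CC f_1 \subset (\pi\times\pi)^{-1}(\CC f_2)$. For the reverse inclusion, assume $(\pi(x_1),\pi(x_2)) \in \CC f_2$. Since $f_1$ is chain recurrent, $|\CC f_1| = X_1$, so Proposition \ref{prop2.01}(c) reduces the task to showing that every attractor $A$ for $f_1$ containing $x_1$ also contains $x_2$. Such an $A$ is clopen in $X_1$ (by Proposition \ref{prop2.01}(a) with $|\CC f_1| = X_1$), forward $f_1$-invariant, hence inward, hence forward $\CC f_1$-invariant. The fiber hypothesis forces $A$ to be $\pi$-saturated: if $x \in A$ and $\pi(x') = \pi(x)$, then $(x,x') \in (\pi\times\pi)^{-1}(1_{X_2}) \subset \CC f_1$, so $x' \in \CC f_1(x) \subset A$. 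Consequently $\pi(A)$ and $\pi(X_1 \setminus A)$ are disjoint closed subsets covering $X_2$, making $\pi(A)$ clopen; together with $f_2(\pi(A)) = \pi(f_1(A)) \subset \pi(A)$, this makes $\pi(A)$ inward for $f_2$ and hence forward $\CC f_2$-invariant. Since $\pi(x_1) \in \pi(A)$ and $(\pi(x_1),\pi(x_2)) \in \CC f_2$, we get $\pi(x_2) \in \pi(A)$, and $\pi$-saturation of $A$ then gives $x_2 \in A$. This proves $\CC f_1 = (\pi\times\pi)^{-1}(\CC f_2)$, and surjectivity of $\pi$ yields $\CC f_2 = (\pi\times\pi)(\CC f_1)$.

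Part (b) follows from the same $\pi$-saturation: if $\{B_1,B_2\}$ is an $f_2$-decomposition, then $\{\pi^{-1}(B_1),\pi^{-1}(B_2)\}$ is an $f_1$-decomposition by continuity and surjectivity of $\pi$ together with the semi-conjugacy; conversely, the two clopen forward $f_1$-invariant sets in an $f_1$-decomposition are inward, hence forward $\CC f_1$-invariant, hence $\pi$-saturated by the argument above, and their $\pi$-images form an $f_2$-decomposition. Part (d) is then immediate from (a), (b), and Proposition \ref{prop2.05}(d), since chain transitivity is equivalent to chain recurrence together with indecomposability. The only real technical step is the $\pi$-saturation of clopen forward $f_1$-invariant subsets, which is exactly where the fiber hypothesis $(\pi\times\pi)^{-1}(1_{X_2}) \subset \CC f_1$ is used.
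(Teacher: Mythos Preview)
Your proof is correct and follows essentially the same approach as the paper's: both hinge on the observation that clopen forward $f_1$-invariant sets are $\pi$-saturated via the fiber hypothesis, and both invoke the attractor characterization from Proposition~\ref{prop2.01} for part (c). The only cosmetic differences are that the paper argues (c) by contrapositive (starting from $(x_1,x_2)\notin\CC f_1$) and reads off (d) directly from (c), whereas you argue (c) directly and obtain (d) from (a), (b), and Proposition~\ref{prop2.05}(d); these are equivalent routes through the same ideas.
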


\begin{proof}
(a) Let $E_{\pi} = (\pi \times \pi)^{-1}(1_{X_2})$.  It is a closed equivalence relation and so
contains $1_{X_1}$.  Hence, $1_{X_1} \subset E_{\pi} \subset \CC f_1$.

Because $\pi$ is surjective, $1_{X_2} = (\pi \times \pi)(1_{X_1})$.  Since $\pi$ maps $f_1$ to $f_2$, it maps
$\CC f_1$ to $\CC f_2$ by Proposition \ref{prop2.02b}(d).
% implies
%$(\pi \times \pi)(\CC F) \subset \CC G$.
Hence,
$$ 1_{X_2} \ = \ (\pi \times \pi)(1_{X_1}) \ \subset \ (\pi \times \pi)(\CC f_1) \ \subset \ \CC f_2.$$

(b) If $B$ and its complement are proper, clopen, forward $f_2$-invariant subsets of $X_2$ then because $\pi$ is
surjective, $\pi^{-1}(B)$ and its complement are proper, clopen, forward $f_1$-invariant subsets of $X_1$.

Now assume that $A$ and its complement are proper, clopen, forward $f_1$-invariant subsets of $X_1$. Since each is
forward $\CC f_1$-invariant, it follows that each is saturated by the equivalence relation $E_{\pi} \subset \CC f_1$.
Hence, $\pi(A)$ and $\pi(X_1 \setminus A)$ are disjoint closed sets with union $\pi(X_1) = X_2$. That is, they are
 complementary clopen sets. Furthermore, they are forward $f_2$-invariant.

 (c)  As mentioned above,   $(\pi \times \pi)(\CC f_1) \subset \CC f_2$. Now assume $(x_1,x_2) \not \in \CC f_1$.
 Since $X_1 = |\CC f_1|$ by (a), Proposition \ref{prop2.01}(c) implies there is an attractor $A$ for $f_1$ which
 contains $x_1$ but not $x_2$, and by Proposition \ref{prop2.01}(d) $A$ is a clopen  $\CC f_1$-invariant set.
 Hence, it is saturated by $E_{\pi}$. So $\pi(A)$ is clopen and $f_2$-invariant, and therefore also $\CC f_2$-invariant.
 Now, $\pi(x_1) \in \pi(A)$, and $x_2 \not\in A = \pi^{-1}(\pi(A))$ implies $\pi(x_2) \not\in \pi(A)$.
It follows that
 $(\pi(x_1),\pi(x_2)) \not \in \CC f_2$. Thus, the complement of $\CC f_1$ in $X_1 \times X_1$ maps into the complement
 of $\CC f_2$. Since $\pi \times \pi$ is surjective, the equations of (c) follow.

 (d) Immediate from (c) and the surjectivity of $\pi$.
\end{proof}

 For any space $X$, the chain relation $\CC 1_X$ is a closed equivalence relation with equivalence classes the components of $X$.
 Let $[X]$ be the zero-dimensional space of components, the quotient space for this equivalence relation with quotient map
 $\pi_X : X \to [X]$, see Lemma \ref{lem2.04}(b) and Proposition \ref{prop2.01}(d).
  There is a natural homomorphism $[\cdot] : H(X) \to H([X])$ with $\pi_X$ mapping $f$ to
 $[f]$ for $f \in H(X)$. Thus, $H(X)$ acts on $[X]$ and we let $[h_X] = \bigcup \{ [h] : h \in H(X) \}$
 be the associated relation on $[X]$.

\begin{prop}\label{prop3.14}
\leavevmode
\begin{itemize}
\item[(a)] A space $X$ is  $h_X$-decomposable iff $[X]$ is $[h_X]$-decomposable.
\item[(b)] If $f$ is a chain recurrent homeomorphism on a space $X$, then
$(\pi_X \times \pi_X)^{-1}(1_{[X]}) \ \subset \ \CC f$. In that case, the following are equivalent:
\begin{itemize}
\item[(i)] The map $f$ is chain transitive.
\item[(ii)] The space $X$ is  $f$-indecomposable.
\item[(iii)] The map $[f]$ is chain transitive.
\item[(iv)] The space $[X]$ is  $[f]$-indecomposable.
\end{itemize}
\end{itemize}
\end{prop}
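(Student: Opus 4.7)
The plan is to handle part (a) directly through a natural bijection between clopen invariant subsets, and then to treat part (b) by first establishing the fiber inclusion and afterward quoting Lemma \ref{lem3.13} together with Proposition \ref{prop2.05}(d).

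For (a), I would exploit the fact that any clopen subset $A$ of $X$ is a union of connected components and hence saturated by $\pi_X$: writing $A = \pi_X^{-1}(B)$, the quotient nature of $\pi_X$ ensures that $A$ is clopen in $X$ iff $B$ is clopen in $[X]$, and $A$ is proper and nonempty iff $B$ is. The intertwining $\pi_X \circ f = [f] \circ \pi_X$ for each $f \in H(X)$ translates forward $h_X$-invariance of $A$ into forward $[h_X]$-invariance of $B$ and conversely. Hence the assignments $A \mapsto \pi_X(A)$ and $B \mapsto \pi_X^{-1}(B)$ put $h_X$-decompositions of $X$ in bijection with $[h_X]$-decompositions of $[X]$, which is exactly (a).

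For the first half of (b), the key input is that if $x$ and $x'$ lie in the same component of $X$, Lemma \ref{lem2.04}(b) places them in the same chain component of $1_X$, so in particular $(x,x') \in \CC 1_X$. Chain recurrence of $f$ supplies the inclusion $1_X \subset \CC f$, and the monotonicity plus idempotence of $\CC$ recorded in \eqref{2.01} upgrade this to $\CC 1_X \subset \CC(\CC f) = \CC f$. This is exactly the desired inclusion $(\pi_X \times \pi_X)^{-1}(1_{[X]}) \subset \CC f$.

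With that inclusion in hand, Lemma \ref{lem3.13} applies with $\pi = \pi_X$, $f_1 = f$, and $f_2 = [f]$. Part (d) of that lemma yields (i) $\Leftrightarrow$ (iii), while part (b) yields (ii) $\Leftrightarrow$ (iv); Proposition \ref{prop2.05}(d) combined with the standing chain recurrence of $f$ gives (i) $\Leftrightarrow$ (ii), closing the chain of equivalences. The only step requiring genuine care is the passage $\CC 1_X \subset \CC f$ via \eqref{2.01} and Lemma \ref{lem2.04}(b); once that is available, everything else is bookkeeping in the machinery already assembled.
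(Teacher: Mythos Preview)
Your proposal is correct and follows essentially the same architecture as the paper's proof: part (a) via saturation of clopen sets, and part (b) by first establishing the fiber inclusion and then invoking Lemma \ref{lem3.13} together with Proposition \ref{prop2.05}(d).

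The one genuine difference worth noting is in how you derive the fiber inclusion $(\pi_X \times \pi_X)^{-1}(1_{[X]}) \subset \CC f$. The paper argues that the space of chain components of $f$ is zero-dimensional (Proposition \ref{prop2.01}(d)), so each connected component of $X$, being connected, must lie inside a single chain component. You instead observe that $(\pi_X \times \pi_X)^{-1}(1_{[X]}) = \CC 1_X$ and then use chain recurrence to get $1_X \subset \CC f$, whence $\CC 1_X \subset \CC(\CC f) = \CC f$ by monotonicity and idempotence \eqref{2.01}. Your route is slightly slicker and avoids the detour through zero-dimensionality; the paper's route is perhaps more conceptual in that it makes explicit why connectedness forces containment in a chain component. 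Either way the remaining bookkeeping with Lemma \ref{lem3.13} and Proposition \ref{prop2.05}(d) is the same, just with the equivalences linked in a different order.
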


\begin{proof}
(a) This is clear because any clopen set is saturated by the equivalence relation $\CC 1_X$.

(b)  Since the space of chain components is zero-dimensional, every
connected set of chain recurrent points is contained in a single chain component. If $f$ is chain recurrent, then
every point is chain recurrent, so each component is contained in a single chain component. It follows that
$(\pi_X \times \pi_X)^{-1}(1_{[X]}) \ \subset \ \CC f$. Since $f$ and $[f]$ are chain recurrent, the equivalences
(i) $\Leftrightarrow$ (ii) and (iii) $\Leftrightarrow$ (iv) follow from Proposition \ref{prop2.05} (d). The implication
(i) $\Rightarrow$ (iii) holds because $\pi$ maps $f$ to $[f]$. The converse (iii) $\Rightarrow$ (i) follows from Theorem \ref{lem3.13}.
\end{proof}

Thus, a chain transitive homeomorphism on $[X]$ lifts to a chain transitive homeomorphism iff it lifts to a chain recurrent
homeomorphism.

Recall that a component $K$ of $X$ is an \emph{isolated component} if it is a clopen subset of $X$.

For a space $X$ let $\II_X$ denote the set of isolated components. Two isolated components $K_1$ and $K_2$ are \emph{$H(X)$-equivalent} if
they are homeomorphic or, equivalently, if there exists $g \in H(X)$ such that $g(K_1) = K_2$. Let $I_X$ be the
set of $H(X)$-equivalence classes in $\II_X$.
For $i \in I_X$ let $Q_i$ be the union of the isolated components in  $i$, and let $Q$ be the
union of all of the isolated components, so that
$Q$ is the disjoint union of the $Q_i$'s. Thus, $Q$ and each of the $Q_i$'s are open subsets of $X$.

\begin{lem}\label{lem3.15} If $A$ is a clopen $H(X)$-invariant set which meets some $\ol{Q_i}$, then it
contains $\ol{Q_i}$. In particular, if all the isolated components are homeomorphic to
one another and the union of the isolated components is dense, then $X$ is $H(X)$-indecomposable. \end{lem}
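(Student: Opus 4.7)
The plan is to reduce the first assertion to a single isolated component and then use $H(X)$-equivalence to spread across the whole class.

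First I would observe that the open set $Q_i$, being a union of clopen isolated components, is itself open, and that for any open $A$ the conditions $A \cap Q_i \neq \emptyset$ and $A \cap \ol{Q_i} \neq \emptyset$ are equivalent. So the hypothesis ``$A$ meets $\ol{Q_i}$'' is the same as ``$A$ meets $Q_i$'', which in turn means $A \cap K \neq \emptyset$ for some isolated component $K \subset Q_i$.

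Next, since $K$ is connected and $A$ is clopen, $A \cap K$ is a nonempty clopen subset of $K$, hence equals $K$; so $K \subset A$. Now for any other isolated component $K' \subset Q_i$, by definition of $H(X)$-equivalence there is some $g \in H(X)$ with $g(K) = K'$. Because $A$ is $H(X)$-invariant, $K' = g(K) \subset g(A) = A$. Thus every isolated component contained in $Q_i$ lies in $A$, so $Q_i \subset A$, and since $A$ is closed, $\ol{Q_i} \subset A$.

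For the ``in particular'' clause, suppose all isolated components are homeomorphic and their union is dense; then $I_X$ reduces to a single class $i$ with $Q_i = Q$ and $\ol{Q} = X$. Suppose for contradiction that $X$ were $H(X)$-decomposable, so that there exists a proper, clopen, nonempty, $H(X)$-invariant subset $A \subset X$. Since $A$ is open and nonempty and $Q$ is dense, $A \cap Q \neq \emptyset$, i.e. $A$ meets $Q_i$. By the first part $\ol{Q_i} \subset A$, i.e.\ $X \subset A$, contradicting that $A$ is proper.

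The argument is essentially immediate once one notices that openness of $A$ turns ``meeting $\ol{Q_i}$'' into ``meeting some isolated component of $Q_i$''; there is no real obstacle, the only point to be careful about is invoking the $H(X)$-equivalence of components within $Q_i$ rather than assuming anything about how components from different classes $Q_i, Q_j$ might relate.
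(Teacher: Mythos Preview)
Your proof is correct and follows essentially the same route as the paper's: use openness of $A$ to pass from $\ol{Q_i}$ to an isolated component $K$, use clopenness to absorb $K$, use $H(X)$-equivalence to absorb all of $Q_i$, then take closures. You spell out two points the paper leaves implicit (why $A\cap\ol{Q_i}\neq\emptyset$ forces $A\cap Q_i\neq\emptyset$, and the derivation of the ``in particular'' clause), but the argument is the same.
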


\begin{proof}
Since $A$ is open, it meets some isolated component $K \in i$. Since it is clopen, it contains $K$.
If $K_1 \in Q_i$, then there exists $g \in H(X)$ such that $g(K) = K_1$, and so $H(X)$-invariance implies $K_1 \subset A$.
Since $Q_i \subset A$ and $A$ is closed, we get $\ol{Q_i} \subset A$.
\end{proof}

We will say that $X$ satisfies the \emph{diameter condition on isolated components} if for every $\ep > 0$
there are only finitely many isolated components with diameter greater than $\ep$.

The following is the furthest we can extend Corollary \ref{cor3.12}.

\begin{theo}\label{theo3.16} For a space $X$, let $X_1$ be the closure of the union of
all components which meet the closure of the union of all isolated components.
Assume that $X$ satisfies the diameter condition on isolated components
and that the open set $X \setminus X_1$ is empty or zero-dimensional.
\begin{itemize}
\item[(a)] Either $X$ is $H(X)$-decomposable or $X$ admits a chain transitive homeomorphism.
\item[(b)] If  $X_1$ is a proper clopen subset of $X$, then $X$ is $H(X)$\hyp{}decomposable,
and so $X$ admits no chain transitive homeomorphism.
\item[(c)] If $X_1$ is  not a proper clopen subset of $X$ and all the isolated components are homeomorphic to one another,
 then $X$ admits a chain transitive  homeomorphism.
\end{itemize}
\end{theo}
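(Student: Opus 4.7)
The plan is to prove (b) as an immediate consequence of $H(X)$-invariance, build the chain transitive homeomorphism in (c) by gluing two maps produced by Theorem \ref{theo3.11}, and deduce (a) by case analysis. For (b), note that $X_1$ is $H(X)$-invariant: every $h \in H(X)$ permutes components of $X$, sending isolated components to isolated components, and hence preserves $Q$, $\overline{Q}$, and $X_1$. A proper, clopen, nonempty $X_1$ therefore yields the $H(X)$-decomposition $\{X_1, X \setminus X_1\}$.

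For (c), the approach mirrors the construction in Corollary \ref{cor3.12}, with point-extension tools replaced by $K$-extension tools. Set $Q^* = \overline{Q} \setminus Q$. Under the hypothesis that every isolated component is homeomorphic to a common (necessarily connected) space $K$, the diameter condition together with Theorem \ref{theo3.08}(b) makes $(\overline{Q}, Q^*)$ a $K$-extension pair, and Theorem \ref{theo3.11}(c) yields a chain transitive homeomorphism $g$ on $\overline{Q}$ restricting to the identity on $Q^*$. Extend $g$ by the identity on $X_1 \setminus Q$ to form $g_1 \colon X_1 \to X_1$; the two pieces agree on the overlap $\overline{Q} \cap (X_1 \setminus Q) = Q^*$, so $g_1$ is a well-defined homeomorphism. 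To verify chain transitivity of $g_1$, observe that $g$ is chain transitive on $\overline{Q}$, so $\overline{Q}$ lies in a single chain component of $g_1$; every non-isolated component $C$ of $X$ contained in $X_1$ and meeting $\overline{Q}$ is connected with $g_1|_C = 1_C$, hence chain transitive by Proposition \ref{prop2.05}(e), so $C$ joins this chain component; since chain components are closed and $X_1$ equals the closure of the union of the components meeting $\overline{Q}$, the chain component contains all of $X_1$. If $X_1 = X$, set $f = g_1$. Otherwise $X \setminus X_1$ is nonempty, open, and zero-dimensional, and $X_2 := \overline{X \setminus X_1}$ is perfect (isolated points of $X$ lie in $Q \subseteq X_1$, and points of $B := X_2 \cap X_1$ are limits of $X \setminus X_1$), so $(X_2, B)$ is a Cantor set extension pair by Theorem \ref{theo3.08}(a). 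Theorem \ref{theo3.11}(b) supplies a topologically transitive $g_2$ on $X_2$ fixing $B$. The concatenation $f = g_1 \cup g_2$ is a homeomorphism of $X$ whose chain components contain both $X_1$ and $X_2$, overlapping in the nonempty set $B$, so $f$ is chain transitive.

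For (a), I would proceed by case analysis. If $X_1$ is proper clopen, (b) applies; if $X_1$ is not proper clopen and all isolated components are homeomorphic, (c) applies. The remaining case --- $X_1$ not proper clopen with isolated components forming multiple $H(X)$-equivalence classes $\{Q_i\}_{i \in I_X}$ --- is the main obstacle. Here the plan is to partition $I_X$ via the equivalence relation generated by $i \sim j$ whenever some component of $X$ meets both $\overline{Q_i}$ and $\overline{Q_j}$. If multiple $\sim$-classes occur, Lemma \ref{lem3.15} together with the diameter condition and the separation of distinct $\sim$-classes yields a proper clopen $H(X)$-invariant subset, giving $H(X)$-decomposability. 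If only one $\sim$-class occurs, a class-wise version of the (c) construction --- gluing the chain transitive homeomorphisms from Theorem \ref{theo3.11}(c) on each $K_i$-extension pair $(\overline{Q_i}, Q_i^*)$ through the shared non-isolated components, with the identity elsewhere and a Cantor extension on $\overline{X \setminus X_1}$ --- yields a chain transitive homeomorphism of $X$; the diameter condition ensures continuity of the glued map at points of $X_1 \setminus Q$.
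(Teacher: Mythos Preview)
Your proofs of (b) and (c) are correct and follow the paper's argument closely; the only omissions are the easy degenerate cases (finitely many isolated components, or none at all) which the paper handles separately.

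Your argument for (a), however, has a genuine gap. The dichotomy you propose via the relation $\sim$ on $I_X$ is false: multiple $\sim$-classes need \emph{not} force $H(X)$-decomposability. For a counterexample, let $C$ be a Cantor set with two distinguished points $a\neq b$, take non-homeomorphic connected spaces $K_1,K_2$, and form $X$ by adjoining a sequence of $K_1$-copies accumulating only on $a$ and a sequence of $K_2$-copies accumulating only on $b$. Then $\overline{Q_1}\cap\overline{Q_2}=\emptyset$ and no component meets both closures, so $1\not\sim 2$. Yet $X$ is $H(X)$-indecomposable: any $H(X)$-invariant clopen set must meet $C$ in a set invariant under all homeomorphisms of $C$ fixing $a$ and $b$, and by Corollary~\ref{cor3.09} that group acts transitively on $C\setminus\{a,b\}$, so the only such clopen subsets of $C$ are $\emptyset$ and $C$. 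The zero-dimensional piece $X\setminus X_1$ can connect the $\sim$-classes without any single component doing so.

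The paper avoids this trap by not attempting a structural classification. Instead it reduces to the case where $Q$ is dense, builds $f$ as the union of the chain transitive $f_i$ on each $\overline{Q_i}$ together with the identity on $A=X\setminus Q$, and then proves the key inclusion $h_X\subset\CC f$: for $g\in H(X)$ and $x\in X$, approximate $x$ by $x_k\in Q_{i_k}$; since $g$ preserves each $Q_i$, $(x_k,g(x_k))\in Q_{i_k}\times Q_{i_k}\subset\CC f$, and closure gives $(x,g(x))\in\CC f$. From $h_X\subset\CC f$ one gets that $f$ is chain recurrent and that any $f$-decomposition is automatically an $h_X$-decomposition. Hence either $f$ is chain transitive, or $X$ is $H(X)$-decomposable. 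This single construction handles all configurations of the $Q_i$'s uniformly, with no need to link them through shared components.
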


\begin{proof}
(b) Obvious since $X_1$ is $H(X)$-invariant.

In (a) and (c) the extension from the closure of the isolated components to the rest of $X$ proceeds just as in Corollary
\ref{cor3.12}.  So from now on we will assume that the union $Q$ of the isolated components is dense.

(c) If there are only $n$ isolated components, then their  union $Q$ is clopen and so is all of $X$.
By assumption they are all homeomorphic to some common space $K$.
We can choose a homeomorphism with $f^n = 1_X$, and so that each periodic orbit meets each component. Clearly, $f$ is
chain transitive.

Now we may assume that $\II_X$ is infinite, and let $A = X \setminus Q$.  Thus, $A$ is a nonempty, closed
nowhere dense set.  By Theorem \ref{theo3.08} (b) $(X, A)$ is a $K$-extension pair. By Theorem \ref{theo3.11} (c) $X$ admits a chain transitive homeomorphism rel $A$.

(a) We may assume that there is more than one equivalence class in $I_X$, for otherwise we are in case (c).
If any $i \in I_X$ is finite, then $Q_i$ is a clopen $H(X)$-invariant set, so $X$ is $H(X)$-decomposable
since $X \not= Q_i$ by the assumption that $I_X$ contains more than one class.

Now assume that every equivalence class in $I_X$ is infinite.  Then the closure of each open $H(X)$-invariant set $Q_i$ meets
$A$, and we let $A_i = \ol{Q_i} \cap A$.  If $i \not= j$, then $\ol{Q_i} \cap \ol{Q_j} \subset A$, and so this intersection equals
$A_i \cap A_j$.

Applying the argument for (c) to $\ol{Q_i}$, there exists a homeomorphism $f_i$ on $\ol{Q_i}$ which is chain transitive
and which restricts to the identity on $A_i$.

Let $f$ on $X$ equal $f_i$ on $\ol{Q_i}$ and the identity on $A$. Because the diameter condition holds, we can apply Lemma
\ref{lem3.10} to see  that
$f$ is a homeomorphism on $X$.  Since each $f_i$ is chain transitive, $\ol{\bigcup_i \ \{ Q_i \times Q_i \} } \subset \CC f$.

Let $x \in X$ and $g \in H(X)$.  Since $Q = \bigcup_i \ Q_i$ is dense in $X$, there is a sequence $\{ x_k \in Q_{i_k} \}$
which converges to $x$. Then $g(x_k) \in Q_{i_k}$, and so $(x,g(x)) \in \ol{\bigcup_i \ Q_i \times Q_i}$.
Hence, $h_X \subset \CC f$. In particular, $1_X \subset h_X$ implies that $f$ is chain recurrent. So by
Proposition \ref{prop2.05} (d) $f$ is chain transitive iff $X$ is $f$-indecomposable. Since $h_X \subset \CC f$, an
$f$-decomposition, which is a $\CC f$-decomposition, is also an $h_X$-decomposition. Hence, if $X$ is $f$-indecomposable, then
it is $h_X$-indecomposable, i.e. $X$ is $H(X)$-indecomposable. On the other hand, if $X$ is $H(X)$-decomposable, then
there does not exist any chain transitive homeomorphism on $X$.
\end{proof}

As we will see below, the diameter condition on isolated components is essential for this result.

\begin{ex}\label{ex3.17}
We construct $X$ so that
\begin{itemize}
\item The connected components of $X$ are all homeomorphic to $[0,1]$.
\item The space $X$ is $H(X)$-decomposable.
\item There are no isolated components.
\end{itemize}

Let $C$ be a Cantor set and $S = \{ a_n : n \in \N \}$ a sequence of distinct points in $C$ with closure $A$ in $C$.
%Let $C^+ \ = \ C \times \{ 0 \} \ \cup \ \{ (a_n,t) : 0 \leq t \leq n^{-1} \}$.
Define
\begin{equation}\label{3.09a}
\begin{gathered}
I_n  \ = \  \{ (a_n,t) : 0 \leq t \leq n^{-1} \} \ \subset \ C \times [0,1] \quad \text{for } n \in \N, \\
C_0  \ = \  C \times \{ 0 \}, \qquad C_+ \ = \  C_0 \ \cup \  \bigcup_n \ I_n, \\
C_{\pm} \ = \  C_+ \ \cup \ C \times [-1,0],\\
A_0  \ = \  A \times \{ 0 \}, \qquad A_+  \ = \  \ol{ \bigcup_n \ I_n},  \\
A_{\pm} \ = \ A_+ \ \cup \ A \times [-1,0].
\end{gathered}
\end{equation}
Each $I_n^{\circ} = I_n \setminus C \times \{ 0 \}$ is open in $C_{\pm}$, and so the points of each $I_n^{\circ}$
have connected neighborhoods. Hence, $A,A_+$ and $A_{\pm}$ are $H(C_{\pm})$-invariant. Thus, if $A$ is a proper clopen
subset of $C$, then $C_{\pm}$ is $H(C_{\pm})$\hyp{}decomposable.  Observe that every component is homeomorphic to the
unit interval and the first coordinate projection maps $C_{\pm}$ onto the Cantor set $C$.

Also, if $A$ is a proper clopen set, then $C \times [-1,0]$ admits chain transitive homeomorphisms, but the factor
$X = C \cup A \times [-1,0]$ is $H(X)$-decomposable.

A homeomorphism $f$ on $C$ can be extended to a homeomorphism $F_+$ of $C_+$ iff $A$ is $f$-invariant. In that case, we can then define
$F_+$ by using any orientation-preserving homeomorphism from $I_x$ to $I_{f(x)}$, i.e. one which maps
$(x,0)$ to $(f(x),0)$. Here $I_x = I_n$ for $x = a_n$ and $= \{ (x,0) \}$ if $x \not\in S$. Continuity at points
of $A_+^{\circ}$ is clear, and if $x \in C$, then for every $\ep$ there is a neighborhood $U$ of $x$ in $C$
so that $y \in U \setminus \{ x \}$ implies that the length of the interval $I_{f(y)}$ is less than $\ep$. This implies
continuity at $(x,0)$. Notice that if $\OO(x)$ is infinite, then
\begin{equation}\label{3.10a}
\lim_{|n| \to \infty} \ |I_{f^n(x)}| \ = \ 0,
\end{equation}
where $|J|$ denotes the length of an interval $J$. This says that any pair $(x,t_1),(x,t_2) \in I_x$ is
\emph{asymptotic} for $F_+$ and $(F_+)^{-1}$  with
\begin{equation}\label{3.11a}
\begin{split}
\om F_+(x,t_1) \ &= \ \om F_+(x,t_2) \ = \ \om f(x) \times \{ 0 \} \ \subset \ C_0, \\
\al F_+(x,t_1) \ &= \ \al F_+(x,t_2) \ = \ \al f(x) \times \{ 0 \} \ \subset \ C_0.
\end{split}
\end{equation}

Now assume that $A$ is not clopen. If $f$ is chain transitive and
every point of $A$ has an infinite orbit, then any extension $F_+$ to
$C_+$ is chain transitive. Observe that $F_+$ is chain transitive if, whenever $a_n$ is a periodic point for $f$, we
define $F_+$ by using the unique linear, orientation-preserving homeomorphism from $I_{a_n}$ to $I_{f(a_n)}$.
On the other hand, if $f(a_1) = a_1$ and on $I_{a_1}$ we define $F_+$ by $(a_1,t) \mapsto (a_1,t^2)$ then
$(a_1,1)$ is a repelling fixed point for $F_+$ on $I_{a_1}$ and hence for $F_+$ on $C_+$. If $F_+$ is chain transitive, then
we can
obtain a chain transitive extension $F_{\pm}$ on $C_{\pm}$ by using $f \times 1_{[-1,0]}$ on $C \times [-1,0]$.
\end{ex}

\begin{ex}\label{ex3.18} We construct $X$ so that
\begin{itemize}
\item The connected components of $X$ are all homeomorphic to $[0,1]$.
\item The space of connected components, $[X]$, consists of a convergent sequence and its limit, and the union of isolated components in $X$ is dense.
\item It is $f$-decomposable for any $f \in H(X)$ but not $H(X)$\hyp{}decomposable.
\end{itemize}

The space $X$ we construct is $H(X)$-indecomposable by the first two properties and by Lemma \ref{lem3.15}.

For every $n \in \N$ we define $I_n = [0,n^{-1}]$ and a continuous function $t_n \colon I_n \to I = [0,1]$ so that, for integers $i = 0, \dots, (2n)!$,
\[
t_n \left( \frac{i}{n(2n)!} \right) \ = \
\begin{cases}
0 & \text{when $i$ is even}, \\
1 & \text{when $i$ is odd},
\end{cases}
\]
and the rest of the values are defined by linear interpolations.

Each interval $I_n$ contains $(2 n)!$ intervals $\{ I^i_n : i = 1, \dots, (2 n)! \}$ of equal length, each of which is mapped by $t_n$ onto $I$. We can further subdivide each $I^i_n$ into intervals $\{ I^{i,j}_n : j = 1, \dots, n \}$ of equal length so that each is mapped to a subinterval of $I$ of length $n^{-1}$ by $t_n$. The corresponding restrictions of $t_n$ are denoted by $t_n^i = t_n|_{I_n^i}$ and $t_n^{i,j} = t_n|_{I_n^{i,j}}$.

Regarding the functions $t_n$, $t_n^i$ and $t_n ^{i,j}$ as closed subsets of $I_n \times I$, we define
\begin{equation}\label{3.12a}
\begin{split}
X_n  \ &= \ \{ n^{-1} \} \times t_n, \quad n \in \N, \\
X_\infty \ &= \ \{(0,0) \} \times [0,1], \\
X \ &= \ \bigcup_n X_n \cup X_\infty.
\end{split}
\end{equation}
The space $X$ is clearly a closed,
bounded subset of $\R^3$, and the space $[X]$ can be identified
with $\pi(X)$, where $\pi : X \to \{ n^{-1} : n \in \N \} \ \cup \ \{ 0 \} $ is the projection to the first coordinate. The union of the isolated components is clearly dense in $X$. In addition, for all appropriate $n$, $i$ and $j$, we define
\begin{equation}\label{3.12b}
X_n^i \ = \ \{ n^{-1} \} \times t_n^i \quad \text{ and } \quad X_n^{i,j} \ = \ \{ n^{-1} \} \times t_n^{i,j}.
\end{equation}
Note that each $X_n$ is a union of the line segments $X_n^i$. Similarly, each $X_n^i$ is a union of the line segments $X_n^{i,j}$. The diameter of each $X_n^i$ is greater than $1$, and the diameter of each $X_n^{i,j}$ is less than $2n^{-1}$. The arc length of $X_n$ is greater than $(2n)!$.

Suppose that $h : X_n \to X_m$ is a homeomorphism
for some $ n < m < \infty$. Then for some $i = 1, \dots, (2n)!$, $j = 1, \dots, n$ and $k = 1, \dots, (2m)!$, it must happen that
$h(X_n^{i,j}) \supset X_m^k$. If not, then each $h(X_n^{i,j})$ meets at most two of the segments $X_m^i$, so the arc length of each mapped segment $h(X_n^{i,j})$ is less than $4$. The arc length of $h(X_n)$, which is the sum of the arc lengths of the $(2n)! n$ mapped segments $h(X_n^{i,j})$, is less than $4 (2n)! n$.
Since $4 (2n)! n < (2m)!$, the map $h$ could not be surjective. It follows that there exists a pair of points
$u, v \in X_n$ with $d(u,v) \leq 2 n^{-1}$ but with $d(h(u),h(v)) \geq 1$.

Now if $f \in H(X)$, then it follows that the induced homeomorphism $[f]$ on the space $[X]$ is the identity on all
but finitely many points. For if not, then by replacing $f$ by $f^{-1}$ if necessary, we can assume that
there are sequence $(m_i)$ and $(n_i)$ in $\N$ tending to infinity such that $f(X_{n_i}) = X_{m_i}$ and $m_i > n_i$ for all $i$. Hence, there
exist $u_i, v_i \in X_{n_i}$ with $d(u_i,v_i) \leq 2 n_i^{-1}$ but with $d(f(u_i),f(v_i)) \geq 1$. Thus, there are
convergent subsequences of $\{ u_i \}$ and
$\{ v_i \}$ with a common limit in $X_{\infty}$.  Hence, $f$ could not extend to a continuous function on
all of $X$.

Thus, there exists $N \in \N$ such that $f(X_n) = X_n$ for all $N \leq n \leq \infty$. Since the isolated components $X_n$ are
invariant for $n$ large enough, $X$ is $f$-decomposable.
\end{ex}

\begin{ex}\label{ex3.19} We construct spaces $X$ and $X^+$ so that
\begin{itemize}
\item The isolated components of $X$ and $X^+$ are all homeomorphic to one another and their union is dense.
\item $X^+$ is  $H(X^+)$-indecomposable but it is $f$-decomposable for all $f \in H(X^+)$.
\item There exists $f \in H(X)$ such that $X$ is $f$-indecomposable, but no  $f \in H(X)$ is chain transitive.
\end{itemize}

 Let $Z = \{ x_n : n \in \Z \} \subset I = [0,1]$ with
\[
x_n =
\begin{cases}
1 - (n+2)^{-1} & \text{for } n = 0, 1, \dots, \\
(|n| + 2)^{-1} & \text{for } n = -1, -2, \dots.
\end{cases}
\]
Define for $n \in \Z$
\begin{equation}\label{3.13a}
\begin{gathered}
a_n \ = \ (x_n,0), \qquad b_n \ = \ (x_n, (|n|+1)^{-1}), \\
I_n \ = \ \{ (x_n,t) : 0 \leq t \leq (|n|+1)^{-1} \}.
\end{gathered}
\end{equation}
Define
\begin{equation}\label{3.14a}
\begin{gathered}
J = I \times \{ 0 \}, \qquad H = [-1,0] \times \{ 0 \} \\
C \ = \ H \ \cup \ J \ \cup \ \bigcup \{ I_n: n \in \Z \}, \\
Z_0 \ = \ \{ a_n  : n \in \Z \}, \qquad Z_1 \ = \ \{ b_n  : n \in \Z \}, \\
e_0 \ = \ (0,0), \qquad e_1 \ = \ (1,0), \qquad e_{-1} \ = \ (-1,0).
\end{gathered}
\end{equation}
We will call $C$ a \emph{comb} with handle $H$.

The group $H(C)$ fixes $e_{-1}, e_{0}$ and $e_1$. Each of the sets $Z_0$ and $Z_1$ is a single $H(C)$-orbit. The
closed sets $J$ and $H$ are $H(X)$-invariant.

If $h \in H(C)$, then
$$h(b_n) \ = \ b_{n+k} \ \Leftrightarrow \ h(a_n) \ = \ a_{n+k} \ \Leftrightarrow \ h(I_n) \ = \ I_{n+k}.$$
In that case $h(a_{n \pm 1}) = a_{n \pm 1 + k}$, and so $h$ induces a translation by $k$ on the sequences $Z_0$ and $Z_1$.
If $k  > 0$, then $e_1$ is an attractor with complementary repellor $H$, and
the reverse is true if $k < 0$. If $k = 0$, then $h$ fixes each point of $Z_0$ and of $Z_1$.

On $C$ define $T$ by
\begin{equation}\label{3.15a}
\begin{aligned}
T(e_0) \ &= \ e_0, & T(e_{\pm 1}) \ &= \ e_{ \pm 1},\\
T(a_n) \ &= \ a_{n+1}, & T(b_n) \ &= \ b_{n+1},
\end{aligned}
\end{equation}
and with $T : [a_{n-1},a_{n}] \to [a_{n},a_{n+1}]$, $T : I_n \to I_{n+1}$ and $T : H \to H$ linear
for all $n \in \Z$. Thus, $T \in H(C)$ induces
a translation by $1$ on $Z_0$ and $Z_1$.

For $n \in \Z$ let
\begin{equation}\label{3.16a}
\begin{split}
I_n^* &= I_n \cup \left \{ (x,(|n|+1)^{-1}) : \frac{x_{n-1} + x_n}{2} \leq x \leq \frac{x_{n+1} + x_n}{2} \right \}, \\
C_n &= C \cup I_n^*,
\end{split}
\end{equation}
a comb with a queer tooth $I^*_n$ at $n$ replacing $I_n$.

Observe that $C$ and each $C_n$ are connected.

If $f : C_n \to C_m$ is a homeomorphism, then
%EJA   I_n^* not C_n^*
$f(I_n^*) = I_m^*$.  If $h \in H(C)$, then
$h$ extends to a homeomorphism from $C_n$ to $C_m$ iff $h(I_n) = I_m$. Thus, $C_n$ and $C_m$ are homeomorphic
for all $n,m \in \Z$.

In $\R^2 \times I$ we define
\begin{equation}\label{3.17a}
\begin{split}
X \ &= \ C \times \{ 0, 1 \} \ \cup \ \bigcup  \{ C_n \times \{ x_n \} : n \in \Z \}, \\
X^+ \ &= \ C \times \{ 1 \} \ \cup \ \bigcup  \{ C_n \times \{ x_n \} : n \in \N \},
\end{split}
\end{equation}

Thus, $X$ and $X^+$ both have a dense union of isolated components, and each isolated component is homeomorphic to $C_0$.  It follows from
Lemma \ref{lem3.15}
that $X$ is $H(X)$-indecomposable and $X^+$ is $H(X^+)$\hyp{}indecomposable.

Any homeomorphism in $H(X^+)$  restricts to a homeomorphism of
%EJA C not X
$C\times \{  1 \}$  and so
restricts to translation by $k$ on $Z_0 \times \{ 1 \}$. This means that
$h$ must map $C_n \times \{ x_n \}$ to $C_{n+k} \times \{ x_{n+k} \}$ when $n \geq N$ for $N$ sufficiently large, and we may
suppose $N + k > 0$.  This implies that $h$ maps the set of $N$ complementary components
$\{ C_n \times \{ x_n \} : 0 \leq n < N \}$ to the set of $N+k$ components $\{ C_n \times \{ x_n \} : 0 \leq n < N +k \}$.
This
requires that $k = 0$. Hence, each isolated component  $C_n \times \{ x_n \} $ with $n \geq N$ is invariant, and so $X^+$
 is $h$-decomposable.

 Thus, $X^+$ is an example of a space which is $H(X^+)$-indecomposable but such that $X^+$ is $h$-decomposable for
 every $h \in H(X^+)$.

In the case of $X$, we
%EJA some rewriting
 start by assuming that $h$ fixes each of the two non-isolated components, i.e. $h$ maps
 $C \times \{ \ep \}$ to itself for $\ep = 0, 1$. As before, since $h$ translates
 $Z_0 \times \{ \ep \}$ by some $k_{\ep}$ for $\ep = 0, 1$,  there exists $N \in \N$
large enough that $h$ maps $C_n \times \{ x_n \}$ to $C_{n+k_1} \times \{ x_{n+k_1} \}$ for $n \geq N$ and
$C_n \times \{ x_n \}$ to $C_{n+k_0} \times \{ x_{n+k_0} \}$ for $-n \geq N$. Again we can choose $N$ large enough that
$N + k_{\ep} > 0$ for $\ep = 0, 1$.
This implies that
$h$ maps the set of complementary components $C_{n} \times \{ x_n \}$ for $-N < n < N$ to the set of components
 $C_{n} \times \{ x_n \}$ for $-N + k_0  < n < N + k_1$. This requires that $k_0 = k_1$, and we let $k = k_0 = k_1$.

If $k > 0$, then
$C \times \{ 1 \}$ is an attractor and $C \times \{ 0 \}$ is a repellor with the reverse if $k < 0$. Hence, $h$ is
not chain transitive.  If $k = 0$, then each isolated component $C_n \times \{ x_n \}$ is invariant for $|n| \geq N$,
and so $X$ is $h$-decomposable.

The remaining possibility is that $h$ interchanges the two limit components
$ C \times \{ \ep \}$  for $\ep = 0, 1$ and then each is invariant for $h^2$.
Applying the previous argument to $h^2$, we see that for a large $N$ the components are translated by $h^2$ with a common $k$.
If $k > 0$, then $C \times \{ 1 \}$ is an attractor for $h^2$ while $C \times \{ 0 \}$ is a repellor. But
since $h$ commutes with $h^2$ this would imply that $C \times \{ 0 \} = h(C \times \{ 1 \})$ would be an
attractor for $h^2$ as well, which it is not.  Similarly, $k < 0$ leads to a contradiction.
It follows that in this interchange case $k = 0$, and so $C_n \times \{ x_n \}$ is $h^2$-invariant
for $|n|$ sufficiently large.
Hence, for each such $n$ the set $C_{n} \times \{ x_{n} \} \cup h(C_n \times \{ x_n \})$ is clopen and $h$-invariant, so
$X$ is $h$-decomposable when $h$ interchanges the ends.

Finally, extend $T : C \to C$ by $T_n : C_n \times \{ x_n \} \to C_{n+1} \times \{ x_{n+1} \} $ for all $n \in \Z$ and
by $T \times 1_{\{ 0, 1 \} }$ on $C \times \{ 0 , 1 \}$ to obtain a homeomorphism (with $k = 1$) with respect to which $X$ is not
decomposable.

Thus, $X$ is an example of a space with an element $h \in H(X)$ so that $X$ is not $h$-decomposable, but nonetheless
there is no chain transitive homeomorphism in $H(X)$.
\end{ex}

\section{Spaces with All Homeomorphisms Chain Transitive}\label{allct}

Having considered spaces which admit no chain transitive homeomorphisms, we turn to the opposite extreme to consider
spaces such that every homeomorphism is chain transitive. By Proposition \ref{prop2.05} (e) the identity $1_X$ is
chain transitive iff $X$ is connected, and if $f \in H(X)$ with $X$ connected,
then $f$ is chain transitive iff it is
chain recurrent.

A space $X$ is called \emph{rigid} if $1_X$ is the only homeomorphism on $X$, i.e. the group $H(X)$ is trivial.
Such spaces were introduced and constructed by de Groot and  Willie \cite{dG-W}.  For a connected rigid space the only
element of $H(X)$ is chain transitive.  We construct some more interesting examples by using rigid spaces as tools.
We need a pairwise disjoint sequence $\{ Z_n \}$ of connected, locally connected spaces such that
\begin{itemize}
\item[($\ast$)] For any nonempty open subset $U$ of $Z_n$ and any disk $I^k$,
there does not exists a homeomorphism of $U \times I^k$ onto any subset of $(Z_n \setminus U) \times I^k$ or onto
any subset of $Z_m \times I^k$ with $m \not= n$.
\end{itemize}
Furthermore, for every $n \in \N$  and for any finite $F \subset Z_n$, the set $Z_n \setminus F$ has only finitely many components
and for any positive integer $N$ there is a subset $F \subset Z_n$ of cardinality $N$ such that
$Z_n \setminus F$ is connected.

Condition ($\ast$) is a slight strengthening of the condition on a space called \emph{strongly chaotic} in \cite{C-C}.
We construct such a sequence in the Appendix, Section \ref{const}. For each $Z_n$ we choose a pair of
distinct points $e_n^-, e_n^+ \in Z_n$.
Our examples are obtained by using such rigid spaces instead of the unit interval in some common constructions.

\begin{ex}\label{ex4.01} We can think of the real line as a graph with $\Z$ as the set of vertices and intervals $[n,n+1]$ as edges.
Now let $Z$ be one of the chaotic spaces described above with points $e^-, e^+ \in Z$. Let $X_0$ be the
quotient space of $\Z \times Z$ with $(n,e^+)$ identified with $(n+1,e^-)$. If $t$ is the translation homeomorphism on
$\Z$ with $t(n) = n+1$, then $t$ has a unique extension $t$ to $X_0$ which is the quotient of $t \times 1_Z$.
The only homeomorphisms on $X_0$ are the iterates $t^n$.  Let $X$ be the one-point compactification of $X_0$ with
the additional point $\infty$. Let $t \in H(X)$ be the unique homeomorphism extension of $t$ on $X_0$ and so of $t$ on $\Z$.
Since $X$ is connected, $1_X$ is chain transitive.  For any $n \not= 0$, we have $\{ \infty \} \ = \ \om (t^n)(x) \ = \ \al (t^n)(x) $
for all $x \in X$, and so $t^n$ is chain transitive.  In this case, $H(X)$ is isomorphic to $\Z$.

In general, suppose that $G$ is a finitely generated (and hence countable) group. Following de Groot \cite{dG}   we
let $X_0$ be the Cayley graph with rigid spaces as linking edges.  That is, let $\{g_1,..,g_n \}$ be a list of generators
for $G$, and let $\{ Z_1, \dots, Z_n \} $ be distinct strongly chaotic spaces as above, with chosen pairs of points.
Let $X_0$ be the quotient space of $G \cup [G \times (\bigcup_{i=1}^n \ Z_i)]$ with $(g,e_i^+)$ identified with
$(g_ig,e_i^-)$ for $g \in G, i = 1, \dots, n$ and $(g,e_i^-)$ identified with $g \in G$ for $i = 1, \dots, n.$
Because there are only finitely many generators, the space $X_0$ is
locally compact and the set of vertices $\{ g \in G \}$ is invariant with respect to any homeomorphism $h$. Furthermore,
if  $g \in G$, then $h(g_ig) = g_ih(g)$, and so $h$ commutes with all left translations. It follows that, on $G$, the mapping $h$
is the right translation $r_v$, where $v = h(u)$ and $u$ is the identity element of $G$. Thus, on $X_0$, the mapping
$h$ is the quotient of the map $r_v \cup [r_v \times 1_{\cup_i \ Z_i}]$. Let $X$ be the one-point compactification of $X_0$,
and let $r_v$ denote the extension of $h$ to $X$. If $v$ is of finite order $k$, then $(r_v)^k = 1_X$ and so $r_v$ is
chain transitive on $X$. If $v$ is of infinite order, then $\{ \infty \} \ = \ \om (r_v)(x) \ = \ \al (r_v)(x) $, and
so  $r_v$ is chain transitive on $X$ in this case as well. The group $H(X)$ is isomorphic to the discrete group $G$
by $v \mapsto r_{v}^{-1}$.
\end{ex}

In these cases, the homeomorphism group is discrete.  It is possible to obtain rather large non-discrete groups.
We first review some standard topology constructions.

A \emph{pointed space} is a pair $(X,x)$ consisting of a  space with a chosen
\emph{base point} $x \in X$. We let $H(X,x)$ denote the closed subgroup of $H(X)$ consisting of those
homeomorphisms which fix $x$. A space $Y$ can be regarded
as a pointed space with base point an isolated point not in $Y$. If $(X_1,x_1)$ and $(X_2,x_2)$ are pointed spaces and $f : X_1 \to X_2$ is a function, we use the notation $f : (X_1,x_1) \to (X_2,x_2)$ to mean that $f(x_1) = x_2$.

If $A$ is a nonempty closed subset of a space $X$, then the space $X/A$ with $A$ \emph{smashed to a point} is the quotient
space of $X$ with respect to the closed equivalence relation $1_X \ \cup \ A \times A$. Thus, the quotient map
$q : X \to X/A$ is a homeomorphism between the open sets $X \setminus A$ and $X/A \setminus \{ x_A \}$ with $x_A$ the
point which is the image of $A$.

Given two pointed spaces $(X_1,x_1), (X_2,x_2)$, their \emph{smash product} is
	\[(X_1,x_1) \# (X_2, x_2) = (X_{12},x_{12}),
\]
a pointed space consisting of the product
$X_1 \times X_2$ with the \emph{wedge} $X_1 \times \{ x_2 \} \ \cup \ \{ x_1 \} \times X_2$ smashed to the point $x_{12}$.
We can also define the smash product of a pointed space $(X_1,x_1)$ and any space $X_2$ as $(X_1,x_1) \# X_2 = (X_{12},x_{12})$, where $X_{12}$ is the product $X_1 \times X_2$ with $\{ x_1 \} \times X_2$ smashed to the point $x_{12}$.
Notice that in this case we can regard the space $X_{12}$ as the one-point compactification of $(X_1 \setminus x_1) \times X_2$.
The projections $\pi_1 : (X_1,x_1) \# X_2 \to (X_1,x_1)$ and $\pi_2 : X_1 \setminus \{ x_1 \} \times X_2 \to X_2$ are open
and surjective.

We can define the smash product of two continuous functions once we fix base points from the domains. For $i = 1,2$, let $X_i$ and $Y_i$ be spaces, $f_i : X_i \to Y_i$ a continuous function and $x_i \in X_i$ a base point. We set $y_i = f_i(x_i)$ to obtain a pointed space $(Y_i,y_i)$ for $i = 1,2$. Let $(X_{12},x_{12}) = (X_1,x_1) \# (X_2,x_2)$ and $(Y_{12},y_{12}) = (Y_1,y_1) \# (Y_2,y_2)$, and let $q : X_1 \times X_2 \to X_{12}$ and $r : Y_1 \times Y_2 \to Y_{12}$ be the quotient maps. We define a continuous function $f = (f_1,x_1) \# (f_2,x_2)$ from $(X_{12},x_{12})$ to $(Y_{12},y_{12})$ by the formula $f \circ q = r \circ (f_1 \times f_2)$. We can also define the smash product of two functions when only one of the domains has a base point. If $(V,v) = (X_1,x_1) \# X_2$ and $(W,w) = (Y_1,y_1) \# Y_2$ and if $s : X_1 \times X_2 \to V$ and $t : Y_1 \times Y_2 \to W$ are the quotient maps, then we define the continuous function $g = (f_1,x_1) \# f_2$ from $(V,v)$ to $(W,w)$ by $g \circ s = t \circ (f_1 \times f_2)$.

\begin{ex}\label{ex4.02} We construct a space such that every homeomorphism is chain transitive and the homeomorphism
group contains a nontrivial path-connected subgroup.

Let $(Z,e)$ be a chaotic space $Z$, as above,
with base point $e \in Z$ such that $Z \setminus \{ e \}$ is connected.
Let $W$ be a connected, compact manifold (perhaps with boundary) of positive dimension.
Let $(X,e_X) = (Z,e) \# W$.
If $(z,w) \in Z \times W$ and $h$ is a homeomorphism from an open set containing $(z,w)$ into $Z \times W$,
then $h(z,w) = (z_1,w_1)$ with $z, z_1 \in Z \setminus \{ e \}$ implies $z = z_1$.
If not, then we can choose disk neighborhoods of $w$ and $w_1$ each homeomorphic
to $I^k$ with $k$ the dimension of $W$, and we can choose
 disjoint  open neighborhoods $U$ of $z$ and $U_1$ of $z_1$ so that $h$ induces a homeomorphism from
$U \times I^k$ onto a subset of $U_1 \times  I^k$, and this contradicts Condition ($\ast$). If
$h(z,w) = e_X$ with $z \in Z \setminus \{ e \}$, then $h$ would map points $(z_2,w_2)$ close to $(z,w)$ to points
$(z_1,w_1)$ with $z_1$ close to $e$. This does not happen by the previous argument. Thus, it follows
that $\pi_1 \circ h \ = \ \pi_1$, where $\pi_1$ is the projection to $(Z,e)$. This implies that $H(X) = H(X,e_X)$, and $\pi_1$ maps every $h \in H(X)$ to $1_Z$.

Since the homeomorphisms of $X$ leave the preimages of $\pi_1$ invariant, it follows that every $h \in H(X)$ is of the form
$h(z,w) = (z,q(z)(w))$ with  $q : Z \setminus \{ e \} \to H(W)$ a continuous map. Thus, the space of
continuous maps $C(Z \setminus \{ e \}, H(W))$ with the obvious  group structure is isomorphic as a
group with $H(X)$. Notice that we need not worry about behavior as $z $ approaches $e$ in $Z$ since all of
$\{ e \} \times W$ is smashed to a point. Thus, the isomorphism is topological if we choose an increasing
sequence $\{ K_n \}$ of compacta in $Z $ with union $Z \setminus \{ e \}$ and define
the metric $d(q_1,q_2) = \sup_n \ 2^{-n} d_H(q_1|_{K_n},q_2|_{K_n})$ with $d_H$ the uniform metric on $H(W)$.

In particular, the constant maps $q$ yield $H(W)$ as a subgroup of $H(X)$. Since $W$ is a manifold of
positive dimension, it follows that the path component of the identity in $H(W)$ and hence in $H(X)$ are nontrivial
subgroups. The remaining path components are cosets and so are nontrivial as well.

Let $h \in H(X)$. Given $\ep > 0$ and
$(z,w) \in X \setminus \{ e_X \}$, there is an $\ep$-chain $z_0, \dots, z_N$ for $1_Z$ with
$z_0 = z$ and $z_N = e$ and $z_i \not= e$ for $i < N$.
That is,
$d(z_{n+1},z_n) \leq \ep$. Define $\{w_0, \dots , w_N \}$ by $w_0 = w$ and $h(z_i,w_i) = (z_i,w_{i+1})$ for $i < N$.  Clearly, $(z_0,w_0), \dots , (z_{N-1}, w_{N-1}), e_X$ is an $\ep$-chain for $h$ from $(z,w)$ to $e_X$.
Similarly, there is an $\ep$-chain for $h^{-1}$ from $(z,w)$ to $e_X$. It follows that $h$ is chain recurrent.
Since $X$ is connected, $h$ is chain transitive.
\end{ex}

Now let $(Y,e)$ be a pointed space such that $Y$ and $Y \setminus \{ e \}$ are connected and, for every $y \in Y$,
the open set $Y \setminus \{ y \}$ has only finitely many components. Let $C$ be a zero-dimensional space,
%Cantor set
and let
$(X,e_X) = (Y,e) \# C$. By assumption on $Y$, the set $X \setminus \{(y,c)\}$
has only finitely many components for $y \in Y \setminus \{ e \}, c \in C$.
Since $C$ is zero-dimensional, the components of $X \setminus \{ e_X \}$ are the sets
$\{ (Y \setminus \{ e \}) \times \{ c \} : c \in C \}$. Thus, if $C$ is infinite,
$X \setminus \{ e_X \}$ has infinitely many components, while
$X \setminus \{(y,c)\}$ has only finitely many components for $y \in Y \setminus \{ e \}, c \in C$.
Hence, if $C$ is infinite, then any
homeomorphism of $X$ fixes $e_X$.  We will assume that, even with $C$ finite, the space $Y$ is such that the point $e_X$ is
fixed by every homeomorphism of $X$.
It follows that for any homeomorphism on $h$ on $X$, the projection $\pi_2 : X \setminus \{ e_X \} \to C$
 maps the restriction of $h$ on $X \setminus \{ e_X \}$ to
a homeomorphism on $C$. Thus, we obtain $(\pi_2)_* : H(X) \to H(C)$ a continuous, surjective homomorphism of
topological groups. This splits via the continuous injection $j : H(C) \to H(X)$ given by
%EJA
$j(k) = (1_Y,e) \# k$.
Now suppose that $h \in H(X)$ is in the kernel of $(\pi_2)_*$, that is, it projects to $1_C$. This means that
every $(Y \setminus \{ e \}) \times \{ c \}$ is $h$-invariant.  It follows that $h(y,c) = (q(c)(y),c)$, where
$q : C \to H(Y,e)$ is a continuous map. That is, the kernel is $C(C,H(Y,e))$ with the obvious topological group
structure.  Thus, $H(X)$ is the semi-direct product of $H(C)$ with $C(C,H(Y,e))$. The adjoint action of $j(H(C))$ is just
the action
$H(C) \times C(C,H(Y,e)) \to C(C,H(Y,e))$  given by $(k,q) \mapsto q \circ (k^{-1})$.

\begin{ex}\label{ex4.03} We construct a space such that every homeomorphism is chain transitive and the homeomorphism
group is isomorphic to the homeomorphism group of the Cantor set.

Let $(Z,e)$ be a pointed chaotic space as before, and let $(X,e_X) = (Z,e) \# C$ with
$C$ a zero-dimensional space. We first check that even if $C$ is finite, any homeomorphism $h$ on $X$ fixes $e_X$.
If not, then there exist points $x_1,x_2 \in Z$, distinct from each other and distinct from $e$, and points $a_1, a_2 \in C$
such that $h(x_1,a_1) = (x_2,a_2)$. This implies $h$ induces a homeomorphism between sufficiently
small neighborhoods $U_1$ of $x_1$ and $U_2$ of $x_2$. Choosing these as disjoint neighborhoods, we obtain a contradiction
of Condition ($\ast$).

In this case $H(Z,e) = H(Z) = \{ 1_Z \}$. That is, the group $H(Z,e)$ is trivial, and
so the group $C(C,H(Z,e))$ is trivial.  This means that $(p_2)_* : H(X) \to H(C)$ and $j : H(C) \to H(X)$ are
inverse isomorphisms, so every homeomorphism on $X$ is mapped by $\pi_1$ to $1_Z$. Just as in
Example \ref{ex4.02}, it follows that every homeomorphism is chain transitive.

When $C$ is a Cantor set, we obtain an example with homeomorphism group isomorphic to the homeomorphism group of
the Cantor set.
\end{ex}

Because of the rigidity of the connecting links, it is not true in these examples that $H(X)$ acts transitively on $X$.
We can obtain examples which satisfy this additional condition by using the beautiful construction of Slovak spaces
due to Downarowicz, Snoha, and Tywoniuk in \cite{D-S-T}.

Let $g$ be a totally transitive homeomorphism on a Cantor set $W$. That is, $g^n$ is topologically transitive for
all $n \in \Z \setminus \{ 0 \}$.
The construction begins with the \emph{suspension} of $g$.  That is, let $Y = W \times [0,1]$
with $(x,1)$ identified with $(g(x),0)$ for all $x \in W$. On $Y$ we define the real flow $\phi : \R \times Y \to Y$, the associated time $t$ map $\phi^t : Y \to Y$, and the path map $\phi_x : \R \to Y$ for $t \in \R, x \in W$ by
\begin{equation}\label{4.02}
\begin{gathered}
\phi(t,(x,s)) \ = \ \phi^t(x,s) \ = \ (g^{[ t + s ]}(x), \{ t + s \}),\\
\phi_x(t) \ = \ \phi^t(x,0),
\end{gathered}
\end{equation}
where $[ a ] $ and $\{ a \}$ are the integer part and fractional part, respectively, of the real number $a$.
Identifying $W$ with $W \times \{ 0 \} \subset Y$, we see that $g$ on $W$ is identified with the time-one map
$\phi^1$ restricted to $W$.

Observe that the flow $ \phi$ restricts to a homeomorphism from $[-\frac{1}{3},\frac{1}{3}] \times W$ onto a
neighborhood of $W \times \{ s\} $ in $Y$ for $s \in [0,1]$. It follows that the path components of $Y$
are exactly the $\R$-orbits of the flow. For $x \in W$ we let $\R x = \phi_x(\R)$ denote the $\R$-orbit through
$(x,0)$.

In $Y$ there are three types of path components:
\begin{itemize}
\item[Type 1] If $x$ is a periodic point for $g$, then $\R x$ is an circle embedded in $Y_0$. This is a
\emph{circle type} path component.

\item[Type 2] If $x$ is neither recurrent for $g$ nor for $g^{-1}$, i.e. $x \not\in \om g(x) \cup \al g(x)$, then
$\R x$ is an embedded copy of $\R$. That is, $\phi_x$ is a homeomorphism from $\R$ onto its image
in $Y$. This is an \emph{embedded $\R$ type}  path component.

\item[Type 3] If $x$ is not periodic but $x \in \om g(x) \cup \al g(x)$, then $\phi_x$ is a continuous
injection which is not a homeomorphism onto its image. In fact, $\R x \subset Y_0$ is not locally connected. This is
an \emph{injected $\R$ type}  path component.
\end{itemize}

The Slovak space construction is based on the following result:

\begin{theo}[{\cite[Lemma 4.3]{D-S-T}}]\label{theo4.04} Let $f$ be a homeomorphism on a space $Y$ with $y_0$ a point of $Y$ which is not
a periodic point for $f$. Let $\{ a_n : n \in \Z \}$ be a sequence of positive reals such that $\Sigma_n \ a_n = 1$
and the set $\{ | \ln (a_n) - \ln(a_{n-1})| : n \in \Z \}$ is bounded, and let $u : Y \setminus \{ y_0 \} \to [0,1]$ be a
continuous function. Let $Y' = Y \setminus \OO f_{\pm}(y_0)$, and on $Y'$ define the
continuous function $u' = \Sigma_n \ a_n u \circ f^n$ so that $u'$ is a closed subset of $Y' \times [0,1]$
with the first coordinate projection $\pi : u' \to Y'$ a homeomorphism. Define on the set $u'$ the homeomorphism
$f' = (\pi)^{-1} \circ f \circ (\pi)$.
Let $X$ be the closure of $u'$ in $Y \times [0,1]$.

The homeomorphism $f'$ and its inverse are uniformly continuous on $u'$, and so $f'$ extends to a homeomorphism
$h$ on $X$. The first coordinate projection $\pi : X \to Y$ maps $h$ on $X$ to $f$ on $Y$. If $y \in Y'$, then $(y,u'(y))$ is
the unique point of $X$ which is mapped by $\pi$ to $y$. Hence, $h$ is an almost one-to-one extension of $f$.
\end{theo}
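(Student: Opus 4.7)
The plan is to dispatch the structural claims by routine unpacking, isolate the one nontrivial input as the uniform continuity of $f'$, and then tackle that step by a compactness-contradiction argument using that $y_0$ is non-periodic.

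First I would check that $u' = \Sigma_n\, a_n (u \circ f^n)$ is a well-defined continuous function $Y' \to [0,1]$. Since $Y'$ is $f_{\pm}$-invariant, every iterate $f^n(y)$ of a point $y \in Y'$ remains in $Y' \subset Y \setminus \{y_0\}$, so each $u \circ f^n$ is continuous and $[0,1]$-valued on $Y'$; the Weierstrass $M$-test against $\Sigma_n\, a_n = 1$ then gives uniform convergence and hence continuity of $u'$. The set $u'$ inside $Y' \times [0,1]$ is thus literally the graph of this function, so the first-coordinate projection $\pi : u' \to Y'$ is a homeomorphism with inverse $y \mapsto (y,u'(y))$, and $f' = \pi^{-1} \circ f \circ \pi$ is automatically a homeomorphism of $u'$ onto itself.

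The heart of the proof is the uniform continuity of $f'$; $(f')^{-1}$ then follows by running the same argument with $f^{-1}$ in place of $f$. I would argue by contradiction using compactness of $Y \times [0,1]$: a hypothetical failure produces, after extracting subsequences, two sequences $(y_k,u'(y_k))$ and $(\tilde y_k,u'(\tilde y_k))$ in $u'$ converging to a common limit $(y^*,t^*)$, while the images $(f(y_k),u'(f(y_k)))$ and $(f(\tilde y_k),u'(f(\tilde y_k)))$ approach \emph{distinct} limits. Uniform continuity of $f$ on $Y$ equates the first coordinates of the two image-limits at $f(y^*)$, so the contradiction must live in the second coordinate. The case $y^* \in Y'$ is ruled out immediately by continuity of $u'$ at $y^*$ and at $f(y^*)$, so the essential case is $y^* = f^m(y_0)$ for some $m \in \Z$. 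Here the crucial observation is that $y_0$ enters the series $\Sigma_n\, a_n u(f^n(y_k))$ through only one term, namely $n=-m$: for every $n \neq -m$ the iterate $f^n(y_k)$ converges to $f^{n+m}(y_0) \neq y_0$ where $u$ is continuous. Dominated convergence against the summable weights $\{a_n\}$ isolates the anomalous term and forces $u(f^{-m}(y_k))$ to converge to the value $\alpha \in [0,1]$ uniquely determined by $a_{-m}\alpha = t^* - \Sigma_{n \neq -m}\, a_n u(f^{n+m}(y_0))$; the same equation yields the same $\alpha$ for $(\tilde y_k)$. Applying the identical analysis to $u'(f(y_k)) = \Sigma_n\, a_{n-1} u(f^n(y_k))$ identifies its limit as $a_{-m-1}\alpha + \Sigma_{n \neq -m}\, a_{n-1} u(f^{n+m}(y_0))$, a function of $\alpha$ alone. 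Both image-sequences therefore share a common limit, contradicting the hypothesis.

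Given uniform continuity of $f'$ and $(f')^{-1}$ on the dense subset $u' \subset X$, each extends uniquely to a continuous self-map of $X$, and their compositions agree with the identity on $u'$ and therefore on all of $X$, making the extension $h$ a homeomorphism. The intertwining $\pi \circ h = f \circ \pi$ is built into the definition of $f'$ on $u'$ and extends by density. For the final uniqueness clause, any $(y,t) \in X$ with $y \in Y'$ is a limit of a sequence $(y_k, u'(y_k))$ with $y_k \to y$, and continuity of $u'$ at $y$ forces $u'(y_k) \to u'(y)$, so $t = u'(y)$. I expect the main obstacle to be the dominated-convergence identification of $\alpha$ from $t^*$ in the $y^* = f^m(y_0)$ case; this uses only $a_{-m} > 0$ and $\Sigma_n a_n < \infty$, while the logarithmic regularity hypothesis on $\{a_n\}$ does not appear necessary for the extension itself and presumably earns its keep later in the Slovak construction.
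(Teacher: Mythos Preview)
The paper does not supply its own proof of this result: Theorem~\ref{theo4.04} is stated as a citation of \cite[Lemma~4.3]{D-S-T} and used as a black box, so there is no argument in the paper to compare yours against.

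Your argument is correct and self-contained. The compactness-contradiction reduction to the case $y^* = f^m(y_0)$, followed by the dominated-convergence isolation of the single ``bad'' term $n = -m$ to recover $\alpha$ from $t^*$, and the observation that $u'(f(y_k)) = \sum_j a_{j-1}\,u(f^j(y_k))$ has its limit determined by that same $\alpha$, is exactly the right mechanism; the symmetry with $f^{-1}$ handles $(f')^{-1}$ as you say, and the remaining extension and almost one-to-one claims are routine. Your closing remark is also accurate: the bounded log-ratio hypothesis on $\{a_n\}$ is not invoked anywhere in this uniform-continuity argument. In the paper it surfaces only implicitly in the description of the Type~4 path components following the theorem, where the fibres $J_n = \{h^n(y_0)\} \times [0,a_{-n}]$ are compared across consecutive $n$; for the extension theorem itself your proof shows the hypothesis is superfluous.
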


Now, following \cite{D-S-T}, we apply this construction to the situation above.

Let $x_0 $ be an element of the dense
$G_{\del }$ set  $\bigcap \ \{ \Trans(g^n) : n \in \Z \setminus \{ 0 \} \}$. By \cite{A-93} Proposition 6.3 (a)
the set of $\tau \in (0, \infty)$ such that $\om \phi^{n \tau }(x_0) = Y = \al \phi^{n \tau }(x_0)$ is residual in
$(0,\infty)$. Fix such a $\tau \in (0,1)$, and let $f = \phi^{\tau }$. Thus, $x_0$ is a transitive point for
$f^n$ on $Y$ for all $n \in \Z \setminus \{ 0 \}$.

 We first define $u$ on a piece of the orbit of the point $y_0 = (x_0,0)$:
\begin{equation}\label{4.03}
u(\phi_{x_0}(t)) \ = \
\begin{cases}
0 & \text{for } - \frac{1}{2} \leq t < 0, \\
\frac{1}{2}(1 - \cos(\frac{\pi}{t})) & \text{for } 0 < t \leq \frac{1}{2}.
\end{cases}
\end{equation}
Apply the Tietze Extension Theorem to obtain the continuous function
$u : Y \setminus \{ y_0 \} \to [0,1]$.

Apply Theorem \ref{theo4.04} to $Y$ with the homeomorphism $f$. We obtain an almost one-to-one lift
$h$ on $X = \ol{u'} \subset Y \times [0,1]$. All of the path components of $X$ are mapped by
$\pi$ homeomorphically onto the path components of $Y$, except that the Type 3 path component $\R x_0$ is
cut into a sequence of path components of a new type.
\begin{itemize}
\item[Type 4] The path component $Comp_n$ of $h^n(y_0)$ is mapped by $\pi$ onto the
set $ \phi_{x_0}(((n-1)\tau, n \tau ])$. As $t \searrow (n-1) \tau$, above the open interval end,
there is a topologist's sine which projects homeomorphically. Above the $n \tau$ endpoint there is a vertical
segment $J_n = \{ h^n(y_0)\} \times [0,a_{-n}]$ to which the oscillating end of the path component $Comp_{n+1}$
converges. That is, $J_n = Comp_n \cap \ol{Comp_{n+1}}$.
Thus, each  path component $Comp_n$ is a homeomorphic image of $\R_+ = [0, \infty)$. Each is
an \emph{embedded $\R_+$}  path component.
\end{itemize}

\begin{theo}\label{theo4.05} The homeomorphism group of $X$ is $H(X) = \{ h^n : n \in \Z \}$. For all $n \not= 0$
the homeomorphism $h^n$ is topologically transitive. \end{theo}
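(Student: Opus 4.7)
I treat the two assertions in turn, starting with topological transitivity of $h^n$, which follows fairly directly from the almost one-to-one factor map $\pi : (X,h) \to (Y,f)$. For $n \neq 0$, Proposition~\ref{prop2.03}(b) makes $\Trans(f^n)$ a dense $G_{\del}$ in $Y$, while $Y \setminus Y' = \OO f_{\pm}(y_0)$ is countable, so $Y' \cap \Trans(f^n) \neq \emptyset$. Pick $y$ in this intersection and set $x = \pi^{-1}(y) \in u'$. The bi-infinite $h^n$-orbit of $x$ stays in $u'$ (which is $h$-invariant) and is sent by the homeomorphism $\pi|_{u'} : u' \to Y'$ onto the $f^n$-orbit of $y$, which is dense in $Y$ and therefore in $Y'$; hence the orbit is dense in $u'$, and so in $X = \overline{u'}$.

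For the rigidity statement, fix $H \in H(X)$. The Type 4 path components $Comp_n$ are the only path components of $X$ homeomorphic to $\R_+ = [0,\infty)$: circles (Type 1) and embedded copies of $\R$ (Type 2) are not, and the injected $\R$ path components (Type 3) fail to be even locally connected. Hence $H$ permutes $\{Comp_n\}_{n \in \Z}$. The closure relations $J_n = Comp_n \cap \overline{Comp_{n+1}} \neq \emptyset$, together with $\overline{Comp_m} \cap Comp_n = \emptyset$ whenever $m \notin \{n, n+1\}$, make $\{Comp_n\}$ into a bi-infinite directed chain whose only automorphisms are the translations $n \mapsto n+k$. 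So there exists $k \in \Z$ with $H(Comp_n) = Comp_{n+k}$ for every $n$, and then $H(J_n) = H(Comp_n) \cap \overline{H(Comp_{n+1})} = J_{n+k}$ as well.

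Now set $H' = h^{-k} \circ H$. Then $H'$ preserves every $Comp_n$, every $J_n$, and the dense open set $u' = X \setminus \bigcup_n J_n = \pi^{-1}(Y')$. Conjugating $H'|_{u'}$ by the homeomorphism $\pi|_{u'} : u' \to Y'$ produces a homeomorphism $\widetilde H$ of $Y'$. For $y = f^n(y_0)$ and any sequence $y_m \to y$ in $Y'$, every accumulation point in $X$ of $\{\pi^{-1}(y_m)\}$ lies in the fibre $J_n$, so every accumulation point of $\{H'(\pi^{-1}(y_m))\}$ lies in $H'(J_n) = J_n$; hence $\widetilde H(y_m) = \pi(H'(\pi^{-1}(y_m))) \to y$. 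This lets one extend $\widetilde H$ continuously to all of $Y$ by setting $\widetilde H = 1$ on $\OO f_{\pm}(y_0)$, yielding a continuous bijection, hence a homeomorphism, of the compact space $Y$ that fixes the dense orbit $\OO f_{\pm}(y_0)$; therefore $\widetilde H = 1_Y$. Unwinding the conjugacy gives $H'|_{u'} = 1_{u'}$, and then continuity of $H'$ together with the fact that the oscillating end of $Comp_{n+1}$ lies in $u'$ and accumulates on all of $J_n$ forces $H'|_{J_n} = 1_{J_n}$. Therefore $H' = 1_X$ and $H = h^k$. The main obstacle is this final descent: producing the continuous extension of $\widetilde H$ across the singular fibres and confirming that it fixes the entire dense orbit, which depends crucially on $H'$ preserving each individual segment $J_n$ rather than merely their union.
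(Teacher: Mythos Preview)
Your proof is correct and follows the same strategy as the paper's: identify the Type~4 components as a distinguished $\Z$-indexed family permuted by any homeomorphism via a translation $n\mapsto n+k$, then push down to $Y$ via the almost one-to-one map $\pi$ and use density of the $f$-orbit of $y_0$. The paper's version is considerably terser---it simply asserts that ``$\pi$ projects $h_1$ to a continuous map on $Y$ which agrees with $f^k$ on the dense set $\OO(f_\pm)(y_0)$'' and that almost one-to-oneness then forces $h_1=h^k$---whereas you supply the details the paper omits: the fibre identification $\pi^{-1}(f^n(y_0))=J_n$, the continuity of the extension of $\widetilde H$ across these fibres, and the passage from $\widetilde H=1_Y$ back to $H'=1_X$ via the oscillating ends accumulating on each $J_n$. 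Your pre-composition with $h^{-k}$ (reducing to $\widetilde H=1_Y$) versus the paper's direct projection to $f^k$ is a purely cosmetic difference. For the transitivity of $h^n$ the paper just invokes the general fact that almost one-to-one extensions of topologically transitive maps are topologically transitive; your explicit lift of a transitive point through $\pi|_{u'}$ is the standard proof of that fact.
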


\begin{proof}
If $n \not= 0$, then by choice of $\tau$, the homeomorphism $f^n = \phi^{n \tau}$ is topologically transitive on $Y$.
Because $\pi$ mapping $h$ to $f$ is an almost one-to-one lift, it follows that $h^n$ is topologically transitive on $X$.

If $h_1$ is any homeomorphism on $X$, then the Type 4 component $Comp_n$ is mapped to some Type 4 component
$Comp_{n+k}$. Furthermore, $J_n = Comp_n \cap \ol{Comp_{n+1}}$ is mapped to $J_{n+k} $. It follows
that $\pi$ projects $h_1$ to a continuous map on $Y$ which agrees with $f^k$ on the dense set $\OO (f_{\pm}) (y_0)$.
It follows that it projects to $f^k$. Since $\pi$ is almost one-to-one, it follows that $h_1 = h^k$.
\end{proof}

Downarowicz, Snoha, and Tywoniuk begin with $g$ on $W$ minimal and observe that, for a residual set of positive reals $\tau$,
the homeomorphism $\phi^{\tau}$ is minimal on $Y$. Choosing one such $\tau$, they have $f$ minimal on $Y$.
All of the components of $Y$ are then of Type 3. Then $h^n$ is minimal on $X$ for all $n \not= 0$.

We recall and extend
their definition of a Slovak space.

\begin{df}\label{df4.06}(a) A space $X$ is
a \emph{Slovak space} if $X$ contains at least three points, $H(X)$ is isomorphic to $\Z$ and every $h \in H(X) \setminus \{ 1_X \}$
is minimal.

(b) A space $X$ is \emph{Slovakian} if $X$ contains at least three points,
 $H(X)$ is nontrivial and every $h \in H(X) \setminus \{ 1_X \}$ is topologically transitive.
 \end{df}

 We extend Theorem 4 of \cite{D-S-T} with essentially the same proof.

 \begin{theo}\label{theo4.07} A Slovakian space is connected, and its homeomorphism group has no elements of finite order
 other than the identity. \end{theo}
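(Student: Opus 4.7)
\textit{Proof plan.} The plan is to establish the two assertions in order. For the no-finite-order claim, suppose $h\in H(X)\setminus\{1_X\}$ has finite order $n>1$. Since $h$ is topologically transitive by the Slovakian hypothesis, Proposition~\ref{prop2.03}(b) implies $X$ is perfect or consists of a single periodic orbit for $h$. But $h^n=1_X$ bounds every $h$-orbit by size $n$, so a transitive point of $h$ forces $|X|\le n$; thus $X$ is finite. Since $|X|\ge 3$, $H(X)=S_{|X|}$ contains transpositions, which are non-identity yet not $|X|$-cycles and so fail to act topologically transitively on the discrete space $X$. This contradicts the Slovakian hypothesis.

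For connectedness, suppose $X$ is disconnected and let $\pi\colon X\to [X]$ be the quotient by connected components, so $[X]$ has at least two points. The assignment $h\mapsto[h]$ is a group homomorphism $H(X)\to H([X])$, and I first claim its kernel is trivial. Indeed, if $k$ preserves every component then it preserves every clopen subset of $X$ (as clopens are unions of components), so if $k\ne 1_X$ then $k$ stabilizes a proper nonempty clopen set and is not topologically transitive, contradicting Slovakian. Thus $H(X)$ embeds as a nontrivial torsion-free subgroup of $H([X])$ in which every non-identity element acts topologically transitively on $[X]$. If $[X]$ were finite, $H([X])$ would be finite and could not contain a nontrivial torsion-free subgroup. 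Hence $[X]$ is infinite, and applying Proposition~\ref{prop2.03}(b) to the topologically transitive $[h]$ on $[X]$ shows $[X]$ is perfect, and therefore a Cantor set.

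The remaining Cantor case is the principal obstacle of the argument. The plan here is to exploit the structure of $X$ as an extension of $[X]$ whose fibers are the connected (and here non-clopen) components, in order to exhibit a non-identity $\sigma\in H(X)$ whose image $[\sigma]\in H([X])$ stabilizes a proper nonempty clopen subset of $[X]$. The construction will mimic the attachment and lifting techniques of Section~\ref{trans}, particularly Theorem~\ref{theo3.07} and Theorem~\ref{theo3.11}: choose a proper nonempty clopen $V\subset[X]$, analyze the homeomorphism types of the fibers of $\pi$ over $V$ and over $[X]\setminus V$, and build $\sigma$ by modifying $X$ on the clopen set $\pi^{-1}(V)$ (for instance, by swapping homeomorphic fibers in a way that is continuous in the Cantor parameter) while leaving $\pi^{-1}([X]\setminus V)$ untouched. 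Such a $\sigma$ preserves $\pi^{-1}(V)$ and therefore is non-identity but not topologically transitive, contradicting the Slovakian hypothesis and completing the proof.
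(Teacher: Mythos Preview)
Your argument for the no-finite-order claim is correct and self-contained; it is actually the reverse of the paper's order of attack (the paper proves connectedness first and then deduces no finite order from perfectness), but your route via $S_{|X|}$ containing non-transitive permutations works fine.

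The connectedness argument, however, has a genuine gap in the Cantor case, and your sketched plan does not close it. You propose to build a non-transitive $\sigma\in H(X)$ by ``swapping homeomorphic fibers in a way that is continuous in the Cantor parameter'' over a clopen $V\subset[X]$. But nothing in the hypotheses guarantees that such a swap exists: the components of $X$ could be pairwise non-homeomorphic rigid continua, or the map $\pi$ could fail to have any local product structure, so there is no general mechanism to manufacture a nontrivial $\sigma$ supported on $\pi^{-1}(V)$. The attachment and lifting results of Section~\ref{trans} that you cite (Theorems~\ref{theo3.07} and~\ref{theo3.11}) produce homeomorphisms only in the very special situation of $K$-extension pairs, where all the isolated pieces are homeomorphic to a fixed $K$ and the diameter condition holds; an arbitrary disconnected Slovakian candidate need not have this form.

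The paper's argument avoids this difficulty entirely by using the given topologically transitive $h$ itself as the raw material. Given any proper nonempty clopen $A\subset X$, one checks that $B=h^{-1}(A)\setminus A$ is nonempty (else $\Trans(h)\subset A$) and clopen with $B\cap h(B)=\emptyset$, and then defines
\[
g(x)=\begin{cases} h(x)&x\in B,\\ h^{-1}(x)&x\in h(B),\\ x&\text{otherwise.}\end{cases}
\]
This $g$ is a non-identity involution, hence not topologically transitive on a space with at least three points, contradicting the Slovakian hypothesis. The point is that $h$ already provides the needed homeomorphism between two disjoint clopen pieces; you do not have to build one from scratch.
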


\begin{proof}
Let $h$ be a topologically transitive homeomorphism on a space $X$. Suppose that $X$ contains a proper,
 clopen, nonempty subset $A$. If $h^{-1}(A) \subset A$, then for any $x \in \Trans(h)$, we have $h^n(x) \in A$ for some $n \in \N$, and so $x \in A$.
Thus, the dense set $\Trans(h)$ is contained in $A$, contradicting the assumption that $A$ is a proper clopen set. It follows
 that $B = h^{-1}(A) \setminus A$ is a proper, clopen, nonempty set, and $B \cap h(B) = \emptyset$. Define
\begin{equation}\label{4.04}
g(x) \ = \
\begin{cases}
h(x) & \text{for } x \in B,\\
h^{-1}(x) & \text{for } x \in h(B),\\
x & \text{for } x \in X \setminus(B \cup h(B)).
\end{cases}
\end{equation}
 The points of the nonempty set $B \cup h(B)$ are periodic with period $2$, and so $g \not= 1_X$. Since $g^2 = 1_X$,
 it is clear that $g$ is not topologically transitive.

Since $X$ is nontrivial and connected, it is perfect and therefore uncountable.  On such a space, no topologically
transitive homeomorphism has finite order.
\end{proof}

\begin{qs}
Does there exists a Slovakian space $X$  for which $H(X)$
is not discrete? More generally, does there exist a nontrivial space $X$ such that the topologically transitive homeomorphisms
are dense in $H(X)$? If $X$ is such a space, then every $h \in H(X)$ is chain transitive.  In particular, since
$1_X$ is chain transitive, $X$ is connected. On the other hand, $1_X$ is not topologically transitive, but it is a limit
of topologically transitive homeomorphisms, and so $H(X)$ is not discrete.
\end{qs}

The only Slovakian spaces we know of are variations on the original construction of \cite{D-S-T}. All of these
have homeomorphism group isomorphic to $\Z$.

Now we extend the above construction which was built on a totally transitive homeomorphism $g$ on a Cantor space $W$.
Suppose that $B$ is a proper, closed, $g$-invariant subset of $W$ and that $r : B \to A$ is a continuous surjection
which maps $g$ on $B$ to $1_A$, the identity on the space $A$. That is, $r^{-1}(a)$ is a closed, invariant set in $B$
for every $a \in A$. Since $B$ is proper, closed and invariant, it is disjoint from $\Trans(g)$.  In particular,
$x_0 \not\in B$.

Let $\wh B$ be the quotient of $B \times [0,1]$ in $Y$.
 Since $r \circ g = r$ on $B$, it follows that $r \circ \pi_1 : B \times [0,1] \to A$ factors to define the
 surjection $\wh r : \wh B \to A$, and each $\wh r^{-1}(a)$ is a $\phi$-invariant closed subset of $Y$.
 The preimage of $\wh B$ via the homeomorphism $\pi : u' \to Y'$ is a compact $f'$-invariant subset of $Y$. Thus,
  $\pi^{-1}(\wh B)$ is a closed $h$-invariant subset of $X$. Define
\begin{equation}\label{4.05}
\begin{split}
 E_{r,Y} &= 1_Y \cup [(\wh r)^{-1} \circ \wh r],\\
 E_{r,X} &= 1_X \cup [(\wh r \circ \pi)^{-1} \circ (\wh r \circ \pi)],
\end{split}
\end{equation}
closed equivalence relations on $Y$ and $X$, respectively. Let $q^Y_r : Y \to Y_r$ and $q^X_r : X \to X_r$ be
the projections to the quotient spaces. The homeomorphisms $f$ and $h$ induce homeomorphisms $f_r$ and $h_r$ on
the quotient spaces. The projection $\pi : X \to Y$ induces $\pi_r : X_r \to Y_r$, a continuous, almost one-to-one
surjection which maps $h_r$ to $f_r$. We will regard the homeomorphisms
%EJA
induced by
$\wh r : \wh B \to A$ and $\wh r \circ \pi : \pi^{-1}(\wh B) \to A$ as identifications, so $A$ is
thought of as a subset of $Y_r$ and also as a subset of $X_r$. These are the sets
of fixed points for $f_r$ and $h_r$, respectively.

For $a \in A$ and $x \in W$, we say that $q_r^Y(\R x)$ is an \emph{$a$-orbit} if
$\om g(x) \subset r^{-1}(a)$ or $\al g(x) \subset r^{-1}(a)$. If $\om g(x) \subset r^{-1}(a)$, then
the map $q_r^Y \circ \phi_x : \R \to Y_r$ extends continuously to $\R \cup \{ + \infty \}$ by mapping
$+ \infty $ to $a$. If $x \in B$, then $x$ is an $a$-orbit iff $r(x) = a$, in which case $q_r^Y(\R x) = \{ a \}$.
If $K_1$ and $K_2$ are path components of $A$, they are \emph{linked} if there exists an $x \in W$
which is both an $a_1$-orbit and an $a_2$-orbit for some $a_1 \in K_1, a_2 \in K_2$, i.e. if $\al g(x) \subset r^{-1}(a_1)$
and $\om g(x) \subset r^{-1}(a_2)$ or vice-versa. Two path components are \emph{linkage equivalent} if there is a
finite sequence $K_1,\dots,K_N$ joining them with each $K_i$ linked to its successor.

In $Y_r$ we have a new path component type:
\begin{itemize}
\item[Type 5] Let $[K]$ be a linkage equivalence class of path components of $A$. The \emph{$[K]$-component} in $Y$ is the
union of all of the $a$-orbits for $a \in K_1 \in [K]$ and of the path components $K_1 \in [K]$.
 \end{itemize}

 It is possible for a $[K]$-component to be of embedded $\R_+$ type. The only way this can happen is if
 the embedding of $[0, \infty)$ onto the $[K]$-component maps $0$ to a point of $A$. The \emph{endpoint} of
 this $\R_+$ type component lies in $A$.

\begin{theo}\label{theo4.08} The homeomorphism group of $X_r$ is $H(X_r) = \{ h_r^n : n \in \Z \}$.
The homeomorphism $h_r^n$ is topologically transitive for all $n \not= 0$. \end{theo}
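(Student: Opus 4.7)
The plan is to parallel the proof of Theorem~\ref{theo4.05} one step further along the quotient. For the topological transitivity statement, I would simply invoke Proposition~\ref{prop2.02b}(d): the quotient map $q_r^X \colon X \to X_r$ is a continuous surjection mapping $h^n$ to $h_r^n$, so the topological transitivity of $h^n$ furnished by Theorem~\ref{theo4.05} descends immediately to $h_r^n$ for every $n \neq 0$.

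For the identification $H(X_r) = \{h_r^n : n \in \Z\}$, the central point I would exploit is that the Type~4 components of $X$ survive intact in $X_r$. Since $x_0 \in \Trans(g^n)$ for every $n \neq 0$, the forward $g$-orbit of $x_0$ is dense in $W$ and therefore cannot lie in the proper closed $g$-invariant set $B$; in particular no iterate $g^k(x_0)$ belongs to $B$, so $\OO f_\pm(y_0)$ is disjoint from $\wh B$ in $Y$. Because each Type~4 component $Comp_n \subset X$ projects under $\pi$ into $\phi_{x_0}(((n-1)\tau, n\tau])$, it lies entirely in $X \setminus \pi^{-1}(\wh B)$. Consequently $q_r^X$ is injective on each $Comp_n$ and on each junction $J_n$, giving in $X_r$ a faithful copy $Comp_n^r = q_r^X(Comp_n)$ with the same topologist's-sine incidence $J_n^r = Comp_n^r \cap \ol{Comp_{n+1}^r}$. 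Given any $h_1 \in H(X_r)$, I would then run the argument of Theorem~\ref{theo4.05}: $h_1$ must permute the $Comp_n^r$ while respecting their linear ordering via the junctions, forcing $h_1(Comp_n^r) = Comp_{n+k}^r$ for some fixed $k \in \Z$. Projecting through the almost one-to-one map $\pi_r \colon X_r \to Y_r$, the induced continuous map on $Y_r$ agrees with $f_r^k$ on the dense orbit $q_r^Y(\OO f_\pm(y_0))$, hence equals $f_r^k$ everywhere, and since $\pi_r$ is almost one-to-one we conclude $h_1 = h_r^k$.

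The hard part will be justifying that $h_1$ really does carry Type~4 components to Type~4 components in the expected sequential order. In Theorem~\ref{theo4.05} this was free because, in $X$, the Type~4 components were topologically unique as the only embedded $\R_+$-type path components with the topologist's-sine tail incidence. In $X_r$, the newly created Type~5 components, built from linkage equivalence classes of path components of $A$ together with their attached $a$-orbits, can in principle mimic parts of this behavior near the fixed-point set $A$. I would resolve this by observing that the $Comp_n^r$ form a single bi-infinite chain of $[0,\infty)$-components under the incidence $Comp_n^r \supset J_n^r \subset \ol{Comp_{n+1}^r}$, whereas any Type~5 component, arising from a single linkage equivalence class, terminates at $A$ and so cannot support an infinite chain of this topological type. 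This distinguishes the $Comp_n^r$ as the links of the unique such $\Z$-indexed chain in $X_r$, and the only admissible action of $h_1$ on that chain is a shift, closing the argument.
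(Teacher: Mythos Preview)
Your overall framework is right, and the topological transitivity part and the survival of the Type~4 components in $X_r$ are handled correctly. The gap is precisely at what the paper calls the ``tricky bit'': your claim that the $Comp_n^r$ form the \emph{unique} bi-infinite chain of embedded $\R_+$-components under the incidence $J_n^r \subset \ol{Comp_{n+1}^r}$ is asserted, not proved. The paper explicitly notes that a Type~5 component can itself be of embedded $\R_+$ type, so individual components are not distinguishable. You then need to rule out an entire $\Z$-indexed chain of such Type~5 components with the same oscillation-to-segment incidence structure, and ``terminates at $A$'' does not by itself do this---each $Comp_n^r$ also terminates (at $J_n^r$), and nothing you have said prevents infinitely many Type~5 $\R_+$-components from being linked end-to-end in the same way.

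The paper avoids this structural analysis entirely with a density argument. Suppose $h_1$ sends some $Comp_n$ to a Type~5 component $Q_n$. Then $h_1(J_n) = Q_n \cap \ol{Q_{n+1}}$ must contain the endpoint of $Q_n$, which lies in $A$. So for every $n$ there is a point $x_n \in J_n$ with $h_1(x_n) \in A$. But the $J_n$ project under $\pi_r$ to the $f_r$-orbit of $y_0$, which is dense in $Y_r$; since $\pi_r$ is almost one-to-one, $\{x_n\}$ is dense in $X_r$, forcing the proper closed set $A$ to be dense---a contradiction. This is both shorter and does not require any classification of the possible incidence patterns among Type~5 components. If you want to salvage your chain-uniqueness approach, you would need a genuine argument that no family of Type~5 components can replicate the $\Z$-chain structure, and that looks harder than the paper's route.
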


\begin{proof}
The surjection $q_r^X$ maps the totally transitive homeomorphism $h$ onto the totally transitive
homeomorphism  $h_r$.

If $h_1$ is any homeomorphism on $X_r$, we proceed just as in the proof of Theorem \ref{theo4.05}.  There is, however,
one tricky bit. It is possible that a Type 5 path component is of embedded $\R_+$ type. If $Comp_n$ maps to
$Comp_{n+k}$, then just as before, $J_n = Comp_n \cap \ol{Comp_{n+1}}$ is mapped to $J_{n+k} $, and $h_1 = h^k$ as before.

Suppose instead that $Comp_n$ is mapped to some Type 5 path component, which we call $Q_n$. Then
$Comp_{n+1}$ will have to be mapped to a Type 5 path component $Q_{n+1}$ with $J_n$ mapping to
$Q_n \cap \ol{Q_{n+1}}$, and this set contains the endpoint of $Q_n$ which is in $A$. Thus, there is a sequence
$\{ x_n \in J_n : n \in \Z \}$ with $h_1(x_n) \in A$. But the sequence $\{ x_n \}$ projects to the $f_r$-orbit of
$y_0$, and this is dense in $Y_r$. Since $\pi_r : X_r \to Y_r$ is an almost one-to-one map, the sequence
$\{ x_n \}$ is dense in $X_r$. Since the proper closed subset $A$ contains the sequence $\{ h_1(x_n) \}$ and
$h_1$ is a homeomorphism, we obtain a contradiction.
\end{proof}

If $X$ is one of the examples of Slovakian spaces as constructed above, then $X \setminus D$ is connected for any countable
subset $D$ of $X$. This is because we can choose a countable invariant subset $D_0 \subset W$ with $x_0 \in D_0$ such that
$D \subset D_0 \times [0,1]$. Since $\Trans(g) \cap \Trans(g^{-1})$ is residual and thus uncountable,
we can choose $x \in (\Trans(g) \cap \Trans(g^{-1})) \setminus D_0$.
Then the orbit $\R x$ is connected, dense in $X$, and
contained in $X \setminus D$.  We don't know whether this property holds for all Slovakian spaces.

Now for our final construction.

\begin{ex}\label{ex4.09} Let $C$ be a space which is countable or the Cantor set. There exists a space $K$
such that $H(K)$ is isomorphic as a topological group to the semi-direct product of $H(C)$ with
$C(C, \Z)$. The action of $H(K)$ on $K$ is topologically transitive, and every element of $H(K)$ is chain transitive.
If $C$ is either finite or the Cantor set, then there exists a topologically transitive homeomorphism in $H(K)$.

We begin with $g$ a totally transitive homeomorphism on a Cantor set $W$ with a fixed point
$e \in W$. Let  $r : \{e\} \to \{e\}$ be the identity. The space $Y_r$ is the suspension of $g$ with $\{ e \} \times [0,1]$ smashed
to the point $e$, and $\phi_r$ is the associated real flow on $Y_r$. Then $X_r$ is the Slovakian space over $Y_r$ with
$p : X_r \to Y_r$  the almost one-to-one projection. The group $H(X_r)$ is cyclic with generator $h_r$ mapped by $p$ to $\phi^{\tau }_r$.

Let $(K,e_K)$  be the smash product $(X_r,e) \# C$, and let $(L,e_L) = (Y_r,e) \# C$. We have the projections
\begin{equation}\label{4.06}
\begin{gathered}
\pi_L :L \setminus \{ e_L \} \ = \ (Y_r \setminus \{ e \})  \times C \ \to \ C,\\
\pi \ = \ \pi_L \circ (p \times 1_C) : K \setminus \{ e_K \} \ = \ (X_r \setminus \{ e \}) \times C \ \to \ C,
\end{gathered}
\end{equation}
and $P : (K,e_K) \to (L,e_L)$ is the smash product $(p,e) \# 1_C$.

If $C$ is a singleton, then $(K,e_K)$ is just $(X_r,e)$,
$H(C)$ is trivial, and $C(C, \Z)$ is isomorphic to $\Z$ and to $H(K)$. The identity is chain transitive, and
all other elements of $H(K)$ are topologically transitive.  So the result is clear in this case. Now assume that
$C$ has at least two points. This implies that $e_K$ disconnects $K$, while no other point does since no point
of $X_r$ disconnects $X_r$.  Hence,
every homeomorphism of $K$ preserves $e_K$.

From the discussion preceding Example \ref{ex4.03} it follows that $H(K)$ is the semi-direct product of
$H(C)$ and $C(C,H(X_r))$, which is essentially $C(C,\Z)$ since $H(X_r)$ is Slovakian. The projection
$\pi :K \setminus \{ e_K \} \to C$
induces the
group surjection $\pi_* : H(K) \to H(C)$, which is split by the injection $j :H(C) \to H(K)$.
If $ k \in H(C)$, then $j(k)$ is
%EJA
$(1_{X_r},e) \# k$. In this case, the subgroup
$C(C,\Z)$ is commutative. If $q \in C(C, \Z)$, then the associated element of $H(K)$ is the projection of
$(x,c) \mapsto (h^{q(c)}_r(x),c)$. The constant elements,
%EJA
the homeomorphisms  $(h^n,e) \# 1_C$, commute with all the elements of $H(K)$ since they commute with
the members of the subgroup $j(H(C))$. For
$(n,k) \in \Z \times H(C)$ let $J(n,k) = (h^n,e) \# k$. Thus,  $J : \Z \times H(C) \to H(K)$
is a topological embedding and a group homomorphism.

Notice that $P$ maps $(h^{q(c)}_r(x),c)$ to $(\phi^{q(c)\tau }_r(x),c)$ and maps $(y,k(c))$ to $(p(y),k(c))$. It follows that
every homeomorphism of $K$ projects by $P$ to a homeomorphism of $L$.

Case 1 ($C$ is finite): Let $k$ be a cyclic permutation on $C$ so that $C$ consists of a single periodic orbit under $k$.
Since $h$ is totally transitive, the product $h \times k$ on $X_r \times C$ is topologically transitive, and
so it projects to a topologically transitive element of $\Z \times H(C) \subset H(K)$.

Let $F$ be an arbitrary homeomorphism on $K$ with $k = \pi_*(F)$. Since $k$ is a permutation of a finite set, there
exists $N \in \N$ such that $k^N$ is the identity. This means that $F^N$ projects to the identity on $C$ and
so preserves each fiber $X_r \times \{ c \}$. So there exists $n_c \in \Z$ such that $F^N$ restricts to $h^{n_c}$ on
the fiber. This is chain transitive on the fiber, topologically transitive if $n_c \not= 0$, and so every point
is chain recurrent for $F^n$ and hence for $F$. Since $K$ is connected, $F$ is chain transitive on $K$.

Case 2 ($C$ is countably infinite): Since $C$ is countable, the isolated points are dense in $C$. If $x_1, x_2 \in C$ are
distinct isolated points, let $k$ interchange these two points and fix the remaining points of $C$.
By Case 1, $J(1,k) = (h,e) \# k$ restricts to a topologically transitive
homeomorphism on the closed subset $(X_r,e) \# \{ x_1, x_2 \}$.
This implies that $H(K)$ acts in a topologically transitive manner on the  set $(X_r \setminus \{e \}) \times \Iso(C)$.
This is a dense open subset of $K$, and so $H(K)$ is topologically transitive on $K$.

Case 3 ($C$ is a Cantor set): A homeomorphism $k$ on $C$ is \emph{topologically mixing} if, for all nonempty open
sets $U_1,U_2 \subset C$, there exists $N \in \N$ such that $N_k(U_1,U_2)$ contains every $n \geq N$. The shift
homeomorphism on $\{ 0,1 \}^{\Z}$ is topologically mixing, and the product of a topologically mixing and a topologically
transitive homeomorphism is topologically transitive.  It follows that if $k \in H(C)$ is topologically mixing,
then $J(1,k)$ is a topologically transitive element of $H(K)$.

It remains to show that every element of $H(K)$ is chain transitive. It suffices to show that, for
an arbitrary $F \in H(K)$ and an arbitrary point $y$ of $K$, we have $e_K \in \CC F(y)$. Applying this to
$F$ and $F^{-1}$, we see that every point of $K$ is chain recurrent for $F$.

With the homeomorphism $g$ on $W$ chosen as above, we do not know
whether this is always true. We prove it by imposing further restrictions on the homeomorphism $g$ with which we began.

Call a homeomorphism  $g$ \emph{semi-minimal} if it has a fixed point $e$ and if for every $x \not= e$ the orbit
$\OO g_{\pm}(x) = \{ g^n(x) : n \in \Z \}$ is dense in $W$. Such semi-minimal homeomorphisms exist.
Topologically mixing examples of semi-minimal homeomorphisms on the Cantor set are constructed explicitly in \cite{A-15}
Theorem 4.19, and Theorem 4.16 there shows that the semi-minimal homeomorphisms form a dense $G_{\del }$ subset
of the set of those chain transitive homeomorphisms on a Cantor set which admit a fixed point. Now assume that $g$
is a  semi-minimal homeomorphism which is topologically mixing and so is totally transitive. This implies that
if $x \not= e$, then the real orbit $\R x$ is dense in $Y_r$.

Let $F \in H(K)$ and $y \in K$. We must show that $y$ chains to $e_K$. Let  $k = \pi_*(F)$ be a homeomorphism on $C$ and
$G = P_*(F)$ a homeomorphism on $L$.

The invariant
set $\om F(y)$ contains a closed subset $M$ so that the restriction of $F$ to $M$ is minimal.  Of course, $M \subset \CC F(y)$.
So it suffices to prove that $e_K \in \CC F(M)$. This is obvious if $M = \{ e_K \}$.  Now assume $M$ is not equal to $\{ e_K \}$ and
so does not contain $e_K$ since distinct minimal sets are disjoint.

Hence, $M$ is a compact subset of $K \setminus \{e_K \}$.
Since $\pi$ maps $F$ to $k$, the subset $A = \pi(M)$ of $C$ is compact and
invariant on which $k$ restricts
to a minimal homeomorphism. Let $(\wh K , e_K)  = (X_r, e) \# A $  and $(\wh L , e_K)  = (Y_r, e) \# A$,
so that $\wh K$ is a closed $F$-invariant subset of $K$ and $\wh L$ is a closed $G$-invariant subset of $L$.
The restriction of $F$ to $M \subset \wh K$ is  minimal, and so if $\wt M = P(M) \subset \wh L$, then $G$ on $\wt M$ is
minimal. On $\wt L$ (but not on $\wt K$) the homeomorphism $(\phi^t_r,e) \# 1_A$ is defined for every $t$, and each such homeomorphism
commutes with $G$. It follows that $\wt M_t = (\phi^t_r,e) \# 1_A(\wt M)$ is a $G$-invariant subset on which $G$
is minimal. If $a \in A$, there exists $((x,s),a) \in \wt M$ for some $x \in W \setminus \{ e \}$ and $s \in [0,1)$ because
$\pi_L$ maps $\wt M$ onto $A$. Because $x \not= e$, the real orbit $\R x$ is dense in $Y_r$. It follows that
$\bigcup_t \ \wt M_t$ is dense in every fiber $\pi_L^{-1}(a)$. This implies that the union of the minimal subsets
of $G$ is dense in $\wh L$. Above each $\wt M_t$ there is a minimal subset $M_t \subset \wh K$ with $P(M_t) = \wt M_t$.
Because $P$ is an almost one-to-one map, it follows that the union $\bigcup_t \ M_t$ is dense in $\wh K$. This implies that the
recurrent points for $F$ are dense in $\wh K$, and so every point of $\wh K$ is chain recurrent. Since $\wh K$ is
connected, $F$ on $\wh K$ is chain transitive and, in particular, $e_K \in \CC F(M)$, as required.
\end{ex}

\begin{remark}
Notice that an almost one-to-one lift of a chain transitive map need not be chain transitive.
Let $t$ be translation by $1$ on $\Z$ and $t^*, t^{**}$ the extensions to the one-point compactification $\Z^* = \Z \cup \{ \infty \}$
and the two-point compactification $\Z^{**} = \Z \cup \{ + \infty, - \infty \}$. The map $p : \Z^{**} \to \Z^*$ which maps
both $+ \infty$ and $- \infty$ to $\infty$ is an almost one-to-one map, but while $t^*$ is chain transitive, $t^{**}$ is not.  In general,
if $A$ is a nowhere dense, closed, invariant set which contains $|\CC f|$ for a homeomorphism $f$ on $X$, then
smashing $A$ to the point $e$ in $X/A$, the induced homeomorphism $f_A$ on $X/A$ has $e$ as the unique minimal point,
so $f_A$ is chain transitive, the projection $q_A : X \to X/A$ is almost one-to-one since $A$ is nowhere dense, and
$f$ is not chain recurrent since $|\CC f|$ is a proper subset of $X$. For example, with $X = I^2$ and $f(x,y) = (x^2,y^2)$,
the corner points are the only chain recurrent points. Let $A$ be the boundary $I \times \{ 0,1 \} \ \cup \ \{ 0,1 \} \times I$.
\end{remark}

\section{Appendix: Chaotic Spaces}\label{const}

The rigid and strongly chaotic spaces constructed in \cite{dG-W} and \cite{C-C} are dendrites or subsets of $\R^2$.
It will be convenient for our purposes to use infinite-dimensional examples.

Let $M $ be an infinite subset of $\N \setminus \{ 1 \}$, and let $m : \N \to M$ be the unique order preserving bijection.
For $n = 0, 1, \dots$, let $M^n = \{ m(2^n(2k -1)) : k \in \N \}$ so that $\{ M^n \}$ is a partition of $M$ by a pairwise
disjoint sequence of infinite sets.  For all $i \in \N$ let $S^i$ be the sphere in $\R^{i+1}$ of radius $i^{-1}$ centered
at $(i^{-1},0,\dots,0)$ so that the origin $0$ is a point of $S^i$. Let $S(n,k) = S^{m(2^n(2k - 1))}$ for $n = 0,1, \dots$ and
$k \in \N$.

Let $Z^0$ be the two-torus $S^1 \times S^1$, and let $A^0 = \{ a^0_k: k \in \N \}$ be a dense sequence of distinct points in $Z^0$. Let $Z^1$ be $Z^0$ with a copy of $S(0,k)$ attached to $Z^0$ with $0 \in S(0,k)$ identified with the
\emph{attachment point} $a^0_k$ for $k \in \N$ so that the attached spheres are disjoint in $Z^1$. Let $r^1 : Z^1 \to Z^0$
be the retraction with each $S(0,k)$ mapped to the point $a_k$. For each $k \in \N$ let $r(0,k) : Z^1 \to S(0,k)$
be the retraction mapping $Z^1 \setminus S(0,k)$ to the point $a_k$.  Since the diameters of the spheres tend to zero,
the space $Z^1$ is compact and metrizable and the retractions are continuous. The open set $Z^1 \setminus Z^0$ is dense in
$Z^1$.

For $n \geq 1$ assume that $Z^n$ has been defined with a retraction $r^n : Z^n \to Z^{n-1}$ and with retractions
$r(n-1,k) : Z^n \to S(n-1,k)$
onto the attached spheres and so that $Z^{n} \setminus Z^{n-1}$ is dense in $Z^n$.
Let $A^n = \{ a^n_k: k \in \N \}$ be a sequence of distinct points dense in $Z^n \setminus Z^{n-1}$.
Let $Z^{n+1}$ be $Z^n$ with a copy of $S(n,k)$ attached to $Z^n$ with $0 \in S(n,k)$ identified with the
attachment point $a^n_k$ for $k \in \N$ so that the attached spheres are disjoint in $Z^{n+1}$. Let $r^{n+1} : Z^{n+1} \to Z^n$
be the retraction which maps the new spheres to their attachment points, and let $r(n,k) : Z^{n+1} \to S(n,k)$ be the
retraction mapping $Z^{n+1} \setminus S(n,k)$ to $a^n_k$. Notice that for each $x \in Z^n \setminus A^n$ the
set $(r^{n+1})^{-1}(x) = \{ x \}$.

Let $Z$ be the inverse limit of the system $\{ r^{n} : Z^{n} \to Z^{n-1}, \ n \in \N  \}$. The inclusions
$i_n : Z^{n} \to Z^{m}$ with $m > n$ commute with the retractions. We obtain a limiting inclusion and so can regard
$\{ Z^n \}$ as an increasing sequence of subsets of $Z$. The projection $r_n : Z \to Z^n$ is then a retraction with
$r^n \circ r_n = r_{n-1}$. We write $r(n,k) : Z \to S(n,k)$ for the composition of retractions $r(n,k) \circ r_{n+1}$.
Notice that if we pick $z$ from the set $Z^* = Z \setminus (\bigcup_n \ Z^n)$, then there is a
unique sequence of attachment points $\{ a^n_{k_n} \}$ converging to $z$ with
$r^n(a^n_{k_n}) = a^{n-1}_{k_{n-1}}$ for all $n$. Thus, $Z^*$,
%EJA
while a dense $G_{\delta}$, is totally disconnected.

Observe that $(r_{n+1})^{-1}(S(n,k))$ and $\{ a^n_k \} \cup (r_{n})^{-1}(Z^n \setminus \{ a^n_k \})$ are connected sets
with union $Z$ and which meet only at $a^n_k$. Thus, each attachment point disconnects $Z$.

If $F \subset Z$ is a finite set containing no attachment points, then $Z \setminus F$ is connected.  For any  finite
$F \subset Z$, the set $Z \setminus F$ contains only finitely many components. In fact, the number of components is
exactly one more than the number of attachment points in $F$.

\begin{lem}\label{lem5.01} If $W$ is a closed, connected, nontrivial subset of
$Z \times I^N$ such that $W \setminus A$ is connected for any
$A \subset W$ with topological dimension at most $N$,
then either $W \subset Z^0 \times I^N$ or there exists a unique attached sphere $S(n,k)$ such that $W \subset S(n,k) \times I^N$.
\end{lem}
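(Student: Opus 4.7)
The plan is to project $W$ onto $Z$ via $\pi : Z \times I^N \to Z$ and to exploit the tree-like cut-point structure of $Z$. The fundamental observation is a dichotomy at each attachment point $a^n_k$: since $\{ a^n_k \} \times I^N$ has dimension exactly $N$, the set $A_{n,k} = W \cap (\{ a^n_k \} \times I^N)$ has dimension at most $N$, so by hypothesis $W \setminus A_{n,k}$ is a (nonempty) connected set. On the other hand, $(Z \times I^N) \setminus (\{ a^n_k \} \times I^N)$ is the disjoint union of the two clopen sets $(T(n,k) \setminus \{ a^n_k \}) \times I^N$ and $(Z \setminus T(n,k)) \times I^N$, where $T(n,k) = (r_{n+1})^{-1}(S(n,k))$ is the connected subtree rooted at $S(n,k)$ described just before the lemma. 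So $W \setminus A_{n,k}$ lies entirely in one slab, and therefore $\pi(W)$ is contained in either $T(n,k)$ or $(Z \setminus T(n,k)) \cup \{ a^n_k \}$.

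Next I would iterate this dichotomy. Assuming $\pi(W) \not\subset Z^0$, I pick any $z_0 \in \pi(W) \setminus Z^0$; it lies in $T(0,k_0) \setminus \{ a^0_{k_0} \}$ for a unique $k_0$, and the dichotomy at $a^0_{k_0}$ forces $\pi(W) \subset T(0,k_0)$ (uniqueness of $k_0$ is automatic since distinct level-$0$ trees are disjoint). If already $\pi(W) \subset S(0,k_0)$, the second alternative of the lemma is achieved. Otherwise $\pi(W)$ has a point in $T(0,k_0) \setminus S(0,k_0)$, which sits in a unique subtree $T(1,k_1) \setminus \{ a^1_{k_1} \}$ attached at a point $a^1_{k_1} \in S(0,k_0)$, and the dichotomy at $a^1_{k_1}$ gives $\pi(W) \subset T(1,k_1)$. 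Continuing inductively, either the process terminates at some level $n$ with $\pi(W) \subset S(n,k_n)$ --- yielding $W \subset S(n,k_n) \times I^N$ --- or it produces an infinite descending chain $\pi(W) \subset T(n,k_n)$ for every $n \in \N$.

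The hard step will be ruling out this infinite-chain situation. For any $z \in \bigcap_n T(n,k_n)$ one has $r_{n+1}(z) \in S(n,k_n)$, so $r_n(z) = r^{n+1}(r_{n+1}(z)) = a^n_{k_n}$ for every $n$; thus $z$ is uniquely determined by the attachment chain $(a^n_{k_n})$. Since $a^{n+1}_{k_{n+1}}$ lies on the sphere $S(n,k_n)$ whose radius $1/m(2^n(2k_n-1)) \leq 1/m(2^n)$ tends to $0$, the sequence $(a^n_{k_n})$ is Cauchy; its limit $z^* \in Z^*$ (a limit point not lying in any $Z^n$) is the single element of $\bigcap_n T(n,k_n)$. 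Hence $\pi(W) = \{ z^* \}$ and $W \subset \{ z^* \} \times I^N$ has dimension at most $N$; taking $A = W$ itself yields $W \setminus A = \emptyset$, contradicting the hypothesis that $W \setminus A$ be (nonempty) connected. The same singleton-slice argument handles uniqueness in the terminal case: two distinct attached spheres intersect in at most a single point, so $W$ contained in both would lie in a fiber $\{ a \} \times I^N$ and again violate the hypothesis.
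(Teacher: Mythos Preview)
Your proof is correct and follows essentially the same route as the paper: exploit that each slice $\{a^n_k\}\times I^N$ has dimension $N$, so removing it cannot disconnect $W$ while it does separate $Z\times I^N$, forcing $\pi(W)$ into one branch of the tree; then iterate and rule out the infinite descent. The paper differs only cosmetically: it first proves $\dim W \ge N+2$ (via the same boundary-disconnection idea) and uses that to dismiss the infinite-chain case as landing in $Z^*\times I^N$, whereas you pin the intersection down to a single fiber $\{z^*\}\times I^N$ and take $A=W$ --- which is cleaner, though it leans on reading ``connected'' as ``nonempty connected'' (a harmless convention here, and one you flag).
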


\begin{proof}
Notice first that the dimension of $W$ is at least $N + 2$. For if $U$ is a nonempty open set with
$\ol{U}$ a proper subset of $W$, then the topological boundary of $U$ disconnects $W$ and so has dimension at least $N + 1$.
This implies that the dimension of $W$ is at least $N + 2$.

The set $W$ is not contained in $Z^* \times I^N$ since $Z^*$ is totally disconnected and the components of
$Z^* \times I^N$ have dimension $N$. Assume $W$ is not a subset of $Z^0 \times I^N$.

If $W$ meets $(Z^{n+1} \setminus Z^{n}) \times I^N$ for some $n = 0, 1, \dots $, then $X$ meets
$(S(n,k) \setminus \{a^n_k\}) \times I^N$  for some $k$.
Since $a^n_k \times I^N$ disconnects $Z \times I^N$ and no such set disconnects $W$,
it follows that $W \subset r_n^{-1}(S(n,k)) \times I^N$.
For the attachment points $a^{n+1}_j$ in $S(n,k) \setminus \{ a^n_k \}$, the sets $a^{n+1}_j\times I^N$
also disconnect $Z$. We see that
either $W \subset S(n,k) \times I^N$ or else $W \subset r_{n+1}^{-1}(b) \times I^N$, where $b$ is one of the attachment points in
$S(n,k) \setminus \{ a^n_k \}$.

If this process does not halt with $W$ contained in the product of $I^N$ with some attached sphere, then
there is a sequence of attachment points $\{b^{n_i}_{k_i}\}$ with
$W \subset  r_{n_i+1}^{-1}(b^{n_i}_{k_i}) \times I^N$ and
$r_{n_i+1}(b^{n_{i+1}}_{k_{i+1}}) = b^{n_i}_{k_i}$
with $n_i \longrightarrow \infty$. The limit of such a sequence $\{ b^{n_i}_{k_i} \}$ is a point of $Z^*$, and this would imply that
$W$ is a subset of $Z^* \times I^N$.
\end{proof}

If $A$ and $X$ are spaces, a \emph{non-null embedding} of $A$ in $X$ is a continuous injective function $j : A \to X$
which is not homotopic to a constant map in $X$.

\begin{cor}\label{cor5.02} If $S$ is a sphere of dimension at least two and
$j : S \times I^N \to Z \times I^N$ is a non-null embedding, then for a
unique pair $(n,k)$, we have $j(S \times I^N) \subset S(n,k) \times I^N$.  Furthermore, the dimension of $S $
equals the dimension of $S(n,k)$. On the other hand,  if  $j_0 : S \to S(n,k)$ is a homeomorphism, then
$j = j_0 \times 1_{I^N} : S \times I^N \to Z \times I^N$ is a non-null embedding. \end{cor}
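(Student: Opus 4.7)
The plan is to set $W = j(S \times I^N)$ and apply Lemma \ref{lem5.01} to $W$, and then exploit the retractions $r_0 : Z \to Z^0$ and $r(n,k) : Z \to S(n,k)$ (together with their product extensions $r_0 \times 1_{I^N}$ and $r(n,k) \times 1_{I^N}$) to control homotopy classes. Since $j$ is a continuous injection on a compact space, $W$ is a closed, connected subset of $Z \times I^N$ homeomorphic to $S \times I^N$; in particular $\dim W = \dim S + N \geq N+2$. For any $A \subset W$ with $\dim A \leq N$, the preimage $j^{-1}(A) \subset S \times I^N$ has dimension at most $N$, so its codimension in the connected manifold $S \times I^N$ is at least two; the standard non-separation theorem then gives that $(S \times I^N) \setminus j^{-1}(A)$ is connected, hence so is $W \setminus A$. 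Lemma \ref{lem5.01} then yields either $W \subset Z^0 \times I^N$, or $W \subset S(n,k) \times I^N$ for a unique pair $(n,k)$.

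To rule out the first alternative, note that $Z^0$ is a 2-torus, whose universal cover $\R^2$ is contractible, so $\pi_k(Z^0 \times I^N) \cong \pi_k(Z^0) = 0$ for every $k \geq 2$. Since $r_0 \times 1_{I^N}$ is a retraction onto $Z^0 \times I^N$, a map into $Z \times I^N$ whose image lies in $Z^0 \times I^N$ is null in $Z \times I^N$ if and only if it is null in $Z^0 \times I^N$. Restricting $j$ to a slice $S \times \{s_0\}$ produces a map from a sphere of dimension at least two into $Z^0 \times I^N$, which must be null; contractibility of $I^N$ then makes $j$ itself null in $Z^0 \times I^N$, contradicting the non-null hypothesis. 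Hence $W \subset S(n,k) \times I^N$ for a unique $(n,k)$.

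The embedding $j$ forces $\dim S + N \leq \dim S(n,k) + N$, so $\dim S \leq \dim S(n,k)$. For the reverse inequality, the retraction $r(n,k) \times 1_{I^N}$ shows that $S(n,k) \times I^N$ is a retract of $Z \times I^N$, so non-nullity of $j$ in $Z \times I^N$ passes to non-nullity of $j$ as a map into $S(n,k) \times I^N$; restricting to a slice and projecting onto $S(n,k)$ yields a non-null map $S \to S(n,k)$ between spheres, which by $\pi_k(S^m) = 0$ for $k < m$ forces $\dim S \geq \dim S(n,k)$. Therefore $\dim S = \dim S(n,k)$. For the converse, if $j_0 : S \to S(n,k)$ is a homeomorphism then $j_0 \times 1_{I^N}$ is a homeomorphism onto $S(n,k) \times I^N$; a null-homotopy in $Z \times I^N$ would compose with $r(n,k) \times 1_{I^N}$ to contract $S(n,k) \times I^N$, contradicting that $S(n,k)$ is a nontrivial sphere (of dimension at least two by the construction of $M$). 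The main technical point is the non-separation input to Lemma \ref{lem5.01}: one must verify that removing from $S \times I^N$ a subset of dimension at most $N$ leaves the space connected, which is a standard dimension-theoretic fact but deserves care since $A$ is not assumed closed.
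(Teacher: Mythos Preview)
Your proof is correct and follows essentially the same route as the paper's: verify the non-separation hypothesis of Lemma~\ref{lem5.01} via the Hurewicz--Wallman result, then use embedding-dimension and vanishing of low homotopy groups of spheres to pin down $\dim S = \dim S(n,k)$, and use the retraction $r(n,k)$ for the converse. The one place you are actually more careful than the paper is in explicitly ruling out the alternative $W \subset Z^0 \times I^N$ from Lemma~\ref{lem5.01}; the paper's proof simply passes over this case, while you dispose of it cleanly using $\pi_k(T^2)=0$ for $k\geq 2$ and the retraction $r_0$.
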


\begin{proof}
Corollary 1 of Theorem IV.4 in \cite{H-W} says that a connected manifold of
dimension at least $N + 2$ cannot be disconnected by a subset of dimension  $N$.
Hence, Lemma \ref{lem5.01} applies to $W = j(S \times I^N)$, and so
$j(S  \times I^N) \subset S(n,k)  \times I^N$ for some pair $(n,k)$ which is clearly unique.
The space $S \times I^N$  cannot be embedded in a manifold of smaller dimension, and so
$\dim S \leq \dim S(n,k)$. If the inequality were strict then the map $j$ would be homotopically trivial since the
homotopy groups of a sphere vanish below its dimension. Hence, the dimension of $S$ must be
$m(2^n(2k - 1)) = \dim S(n,k)$.

If $j_0 : S \to S(n,k)$ is a homeomorphism, then
$j = j_0 \times 1_{I^N} : S \times I^N \to S(n,k)  \times I^N$ is not homotopically trivial.
 Since $S(n,k)$ is a retract of $Z$, the embedding $j : S  \times I^N \to Z  \times I^N$ is not homotopically trivial.
\end{proof}

Thus, we can associate to any open subset $U \subset Z$ the set $\del(U) = \{ \dim S(n,k) : S(n,k) \subset U \} \subset M$.
Since the diameters of the attaching spheres tend to zero, it follows that
if $U$ is nonempty, then $\del(U)$ is infinite.

Corollary \ref{cor5.02} says that for a sphere $S$ of dimension at
least two there exists a non-null embedding of $S \times I^N$ into
$U \times I^N$ iff $\dim S \in \del(U)$.  That is, $\del(U)$ is a topological invariant for the sets $U \times I^N$.
Observe that if $U_1$ and $U_2$ are disjoint, nonempty, open sets in $Z$, then $\del(U_1) \cap \del(U_2) = \emptyset$ since distinct
attached spaces have distinct dimensions.

If we begin by partitioning $\N \setminus \{ 1 \}$ into a pairwise disjoint sequence $\{ M_n \}$ of infinite subsets,
then we can do this construction associating $Z_n$ with $M_n$.  That is, $\del(Z_n) = M_n$. It follows that
if $U_1 \subset Z_{n_1}, U_2 \subset Z_{n_2}$ are nonempty open subsets
with $n_1 \not= n_2$, then $\del(U_1) \cap \del(U_2) = \emptyset$.

Condition ($\ast$) of Section \label{all-ct} is thus proved for this sequence of spaces.

\bibliographystyle{amsplain}

\end{document}